\documentclass{amsart}

\usepackage[T1]{fontenc} 

\usepackage{ adjustbox }
\usepackage[boxed]{ algorithm2e }
\usepackage{ amsthm }
\usepackage{ amssymb }
\usepackage{ amsmath }
\usepackage{ array }
\usepackage{ booktabs }
\usepackage{ cite }
\usepackage{ color }
\usepackage{ enumitem }
\usepackage{ graphicx }
\usepackage{ latexsym }
\usepackage{ mdframed }
\usepackage[most]{ tcolorbox }
\usepackage{ url }
\usepackage[all]{ xy }

\theoremstyle{definition}

\newtheorem{definition}{Definition}[section]
\newtheorem{conjecture}[definition]{Conjecture}
\newtheorem{example}[definition]{Example}

\newtheorem{openproblem}[definition]{Open problem}

\newtheorem{remark}[definition]{Remark}

\theoremstyle{plain}

\newtheorem{lemma}[definition]{Lemma}
\newtheorem{proposition}[definition]{Proposition}
\newtheorem{theorem}[definition]{Theorem}
\newtheorem{corollary}[definition]{Corollary}

\newcommand{\au}{\mathcal{A\,U}}
\newcommand{\na}{\mathcal{N\!A}}
\newcommand{\rb}{\mathcal{RB}}
\newcommand{\tad}{\mathcal{T\!AD}}
\newcommand{\taq}{\mathcal{T\!AQ}}

\newcommand{\longleftmapsto}{\text{\reflectbox{$\longmapsto$}}}

\makeatletter
\renewcommand{\tocsection}[3]{%
\indentlabel{\@ifnotempty{#2}{\makebox[1.50em][l]{\ignorespaces#1#2.}}}#3}
\renewcommand{\tocsubsection}[3]{%
\indentlabel{\@ifnotempty{#2}{\hspace*{1.50em}\makebox[2.25em][l]{\ignorespaces#1#2.}}}#3}
\renewcommand{\tocsubsubsection}[3]{%
\indentlabel{\@ifnotempty{#2}{\hspace*{3.75em}\makebox[3.00em][l]{\ignorespaces#1#2.}}}#3}
\makeatother

\makeatletter
\@namedef{subjclassname@2020}{%
\textup{2020} Mathematics Subject Classification}
\makeatother

\allowdisplaybreaks

\begin{document}

\title{Ternary analogs of dendriform algebras and quadri-algebras}

\author{Yu Hin (Gary) Au}

\address{Department of Mathematics and Statistics,
University of Saskatchewan,
Canada}

\email{au@math.usask.ca}

\author{Murray R. Bremner}

\address{Department of Mathematics and Statistics,
University of Saskatchewan,
Canada}

\email{bremner@math.usask.ca}

\subjclass[2020]{%
Primary
17A40. 
Secondary
05A15, 
17-08, 
17A30, 
17A50, 
18M65, 
68W30. 
}

\keywords{%
Ternary compositions, Rota-Baxter operators, dendriform algebras, quadri-algebras, 
nonsymmetric operads, polynomial identities, enumerative combinatorics, computational algebra.
}

\begin{abstract}
We construct ternary analogs of dendriform algebras and quadri-algebras,
generalizing the well-known binary case.
These new structures are defined in terms of nonsymmetric operads.
We construct these new operads using ternary operations defined in terms of 
ternary Rota-Baxter operators.
The relations defining these operads lie in the quadratic subspace of the kernel of a rewriting morphism 
from a free operad to the operad generated by one associative ternary operation 
and one (or two commuting) unary operators satisfying the ternary Rota-Baxter relation(s).
We use computational algebra to compute a basis for the quadratic component of the kernel 
and thereby obtain the quadratic relations which define these new operads.
In the dendriform case, we obtain 3 operations satisfying 10 relations.  
In the quadri-algebra case, we obtain 9 operations satisfying 50 relations. 
The latter computations combine modular and rational arithmetic, 
but we are able to provide certificates of the results which can be verified 
in characteristic 0. 
\end{abstract}

\maketitle

{\footnotesize\tableofcontents}


\section{Introduction}

The results of this paper combine three topics, namely $n$-ary algebras, dendriform algebras,
and Rota-Baxter operators.
We use computational algebra to determine polynomial identities satisfied by 
ternary analogs of dendriform algebras and quadri-algebras.
In this section, we recall some background, motivation, and references.


\subsubsection*{Ternary and $n$-ary algebras}

An $n$-ary algebra ($n$-tuple system) is a vector space $A$ together with 
a multilinear map $A^n \to A$.
Hestenes \cite{Hestenes} introduced the concept of associative triple system;
this theory was further developed by Lister \cite{ListerTR}.
This work was extended to associative $n$-tuple systems by Carlsson \cite{Carlsson}.
Gnedbaye \cite{Gnedbaye} discusses the operads of associative $n$-ary algebras and their Koszul duals.

The notion of a Lie triple system was introduced by Jacobson \cite{Jacobson};
this theory was further developed by Lister \cite{ListerLTS}.
Two different definitions of Lie $n$-tuple systems were introduced by Filippov
\cite{Filippov} and Hanlon and Wachs \cite{HW}.
Kerner \cite{Kerner} discusses the application of ternary algebraic structures in physics.
For applications of $n$-ary analogs of Lie algebras in physics,
see de Azc\'arraga and Izquierdo \cite{DI}.

For Jordan triple systems, see Neher \cite{Neher}; for alternative triple systems,
see Loos \cite{Loos}; for Poisson triple systems, see Bremner and Elgendy \cite{BE},
and for Leibniz triple systems, see Bremner and S\'anchez-Ortega \cite{BSO}.
For a detailed survey of nonassociative triple systems, see the lecture notes of
Meyberg \cite{Meyberg}.

Duplij \cite{Duplij} provides a survey of polyadic ($n$-ary) algebraic structures.


\subsubsection*{Dendriform algebras and quadri-algebras}

Dendriform algebras were introduced by Loday \cite{Loday}; see also Loday and Ronco \cite{LR}.
Dendriform algebras may be obtained from associative algebras
by splitting the operation; see Bai et al.~\cite{BBGN}.
A dendriform algebra is an associative algebra $D$ whose 
multiplication $\bullet$ can be decomposed as the sum of two operations,
$x \bullet y = x \prec y + x \succ y$,
satisfying these relations:
\[
( x \succ y ) \prec z = x \succ ( y \prec z ),
\;
( x \prec y ) \prec z = x \prec ( y \bullet z ),
\;
x \succ ( y \succ z ) = ( x \bullet y ) \succ z.
\]
For cohomology and deformations of dendriform algebras, see Das \cite{Das}.
For the braided analog of dendriform algebras, see Li and Guo \cite{LG}.
Foissy \cite{F2} describes the connection between typed binary trees and generalized dendriform algebras.
For polydendriform algebras, see Giraudo \cite{Giraudo}.
Bai et al.~\cite{BLN} introduce L-dendriform algebras, a Lie-theoretic analog of dendriform algebras.
Hou and Bai \cite{HB} discuss the analog of dendriform algebras in Jordan theory.
Manchon \cite{Manchon} explains the connection between dendriform algebras and
Connes-Kreimer renormalization.

Splitting the operations in a dendriform algebra leads to the definition of quadri-algebra,
introduced by Aguiar and Loday \cite{AL}.
A quadri-algebra is an associative algebra $Q$ whose 
multiplication $\bullet$ can be decomposed as the sum of four operations,
$x \bullet  y = x \nearrow y + x \searrow y + x \swarrow y + x \nwarrow y$. 
Before stating the defining relations, we introduce four auxiliary operations,
\begin{alignat*}{2}
x \succ y &:= x \nearrow y + x \searrow y,
&\qquad
x \prec y &:= x \nwarrow y + x \swarrow y,
\\
x \wedge y &:= x \nwarrow y + x \nearrow y,
&\qquad
x \vee y &:= x \swarrow y + x \searrow y.
\end{alignat*}
The quadri-algebra relations are then as follows,
\small
\begin{alignat*}{3}
( x \nwarrow y ) \nwarrow z &= x \nwarrow ( y \bullet z ),
&\;
( x \nearrow y ) \nwarrow z &= x \nearrow ( y \prec z ),
&\;
( x \wedge y ) \nearrow z &= x \nearrow ( y \succ z ),
\\
( x \swarrow y ) \nwarrow z &= x \swarrow ( y \wedge z ),
&\;
( x \searrow y ) \nwarrow z &= x \searrow ( y \nwarrow z ),
&\;
( x \vee y ) \nearrow z &= x \searrow ( y \nearrow z ),
\\
( x \prec y ) \swarrow z &= x \swarrow ( y \vee z ),
&\;
( x \succ y ) \swarrow z &= x \searrow ( y \swarrow z ),
&\;
( x \bullet y ) \searrow z &= x \searrow ( y \searrow z ).
\end{alignat*}
\normalsize
The operations $\wedge$ and $\vee$ endow $Q$ with the structure of a dendriform algebra,
as do the operations $\prec$ and $\succ$.

For a study of the quadri-algebra operad, see Vallette \cite[\S\S 5.6-5.7]{V}.
Gr\"obner bases for the quadri-algebra operad are obtained by Madariaga \cite{M}.
For free quadri-algebras and dual quadri-algebras, see Foissy \cite{F}.
For quadri-bialgebras, see Ni and Bai \cite{NB1}.
For combinatorial aspects of quadri-algebras, see Foissy et al.~\cite{FMNT}.
For the connection between Hopf algebras and quadri-algebras, 
see Ebrahimi-Fard and Guo \cite{EFG2}.
For $\mathcal{O}$-operators on quadri-algebras, see Bai \cite{Bai}.

Further splitting of operations produces the octo-algebras introduced by Leroux \cite[\S 5]{Leroux};
see also Ni and Bai \cite{NB2}.
For an operadic point of view on splitting of operations, see Ebrahimi-Fard and Guo \cite{EFG}
and Pei et al.~\cite{PBG}.


\subsubsection*{Rota-Baxter operators}

Let $A$ be an associative algebra with product denoted $xy$.
A Rota-Baxter operator (of weight zero) on $A$ is a linear map 
$R\colon A \to A$ satisfying the Rota-Baxter relation,
\begin{equation}
\label{binaryRB}
R(x) R(y) - R( x R(y) ) - R( R(x) y ) = 0,
\qquad
\forall x, y \in A.
\end{equation}
The pair $(A,R)$ is called a Rota-Baxter algebra.
(We will frequently use the abbreviation RB for Rota-Baxter.)
The theory of RB operators originated in the work of Baxter \cite{Baxter}
and was developed from an algebraic point of view by Rota \cite{Rota}.
The monograph of Guo \cite{Guo} gives a systematic introduction to RB theory with many references.
RB algebras have appeared in theoretical physics in the work of
Connes and Kreimer on renormalization of perturbative quantum field theory;
see Ebrahimi-Fard and Guo \cite{EFG5}.

An important fact about RB operators 
was first noted by Aguiar \cite[Proposition 5.1]{Aguiar}:
if $A$ is an RB algebra, then the operations 
$x \prec y := x R(y)$ and $x \succ y := R(x) y$
endow $A$ with the structure of a dendriform algebra.
For more about the relation between RB algebras 
and dendriform algebras, see Ebrahimi-Fard and Guo \cite{EFG3,EFG4}.

Aguiar and Loday \cite[Corollary 2.6]{AL} showed that
if $R$ and $S$ are two commuting RB operators on an associative algebra $A$,
then the operations
\begin{alignat*}{2}
x \nearrow y &:= R(x) S(y),
&\qquad
x \searrow y &:= R(S(x)) y,
\\
x \swarrow y &:= S(x) R(y),
&\qquad
x \nwarrow y &:= x R(S(y)),
\end{alignat*}
endow $A$ with the structure of a quadri-algebra.


\subsubsection*{Results of this paper}

In \S\ref{preliminaries} we recall basic definitions and results about
nonsymmetric operads, Rota-Baxter operads, and the rewriting morphism.
In \S\ref{dimensionformula} we prove a dimension formula for the homogeneous 
subspaces of the operad $\au_{n,d}$ generated by one associative $n$-ary operation 
and $d$ commuting unary operators.
In \S\ref{twopropositions} we show that the binary RB operad
contains suboperads which are quotients of $n$-ary RB operads, 
and that in an $n$-ary RB operad the sum of the $n$ new $n$-ary operations is associative.
In \S\ref{ternarydendriform} we discuss the ternary analog of dendriform algebras
and determine the quadratic defining relations:
there are three ternary operations satisfying ten relations.
In \S\ref{ternaryquadrialgebras} we discuss the ternary analog of 
quadri-algebras:
there are nine ternary operations satisfying fifty relations.


\subsubsection*{Notation}

We write $\mathbb{F}$ for an arbitrary field.
All computations described in this paper take place over either the rational numbers 
$\mathbb{Q}$ (primarily) or a finite prime field $\mathbb{F}_p$ (for certain large computations).
We write $[n] = \{ 1, \dots, n \}$.
The computations described below were done using Maple 2020 on a 2026 MacBook Pro 
with the Apple M5 Pro chip using macOS Tahoe 26.6.


\section{Preliminaries}
\label{preliminaries}


\subsubsection*{Nonsymmetric operads}

We recall the partial definition of nonsymmetric operad;
that is, the definition in terms of partial compositions.

\begin{definition}
\cite[\S3.2.2]{BD} \,
A \emph{nonsymmetric operad} is a collection of vector spaces 
$\mathcal{O} = \{ \mathcal{O}(n) \mid n \ge 0 \}$
indexed by the non-negative integers 
together with an element $\mathrm{id} \in \mathcal{O}(1)$
and linear maps called \emph{partial compositions},
\[
\circ_i \colon \mathcal{O}(m) \otimes \mathcal{O}(n) \longrightarrow \mathcal{O}(m+n-1),
\qquad
\alpha \otimes \beta \longmapsto
\alpha \circ_i \beta,
\qquad
1 \le i \le m,
\]
which satisfy the following relations for all 
$\alpha \in \mathcal{O}(m)$,
$\beta \in \mathcal{O}(n)$,
$\gamma \in \mathcal{O}(p)$:
\begin{enumerate}
\item
sequential relation:
\[
\alpha \circ_i ( \beta \circ_j \gamma )
=
( \alpha \circ_i \beta ) \circ_{i-1+j} \gamma, 
\qquad 
1 \le i \le m, \, 1 \le j \le n;
\]
\item
parallel relation:
\[
( \alpha \circ_i \beta ) \circ_{j+n-1} \gamma
=
( \alpha \circ_j \gamma ) \circ_i \beta,
\qquad
1 \le i < j \le m;
\]
\item
unit relation:
\[
\mathrm{id} \circ_1 \alpha = \alpha = \alpha \circ_i \mathrm{id},
\qquad
1 \le i \le m.
\]
\end{enumerate}
The (nonzero) elements of $\mathcal{O}(n)$ are said to have \emph{arity} $n$.
\end{definition}

The nonzero elements of $\mathcal{O}(n)$ may be identified with linear combinations
of planar rooted trees with $n$ leaves.
The parallel and sequential relations can be understood 
in terms of grafting of trees:
$\alpha \circ_i \beta$ means that the root of $\beta$ is identified with the $i$-th leaf of $\alpha$.
Partial compositions can also be understood as representing insertion of the output of one operation
into an argument of another operation.
Interpret $\mathcal{O}(q)$ as the space of all $q$-ary operations on some (unspecified)
vector space.
If $\alpha \in \mathcal{O}(m)$ and $\beta \in \mathcal{O}(n)$
then we write $\alpha(x_1,\dots,x_m)$ and $\beta(y_1,\dots,y_n)$.
For $1 \le i \le m$ we have the $(m+n-1)$-ary operation
\[
( \alpha \circ_i \beta ) ( z_1, \dots, z_{m+n-1} )
=
\alpha( z_1, \dots, z_{i-1}, \beta( z_i, \dots, z_{i+n-1} ), z_{i+n}, \dots, z_{m+n-1} ).
\]

\begin{definition}
The operation $\omega \in \mathcal{O}(n)$ is \emph{associative} if every partial 
composition of $\omega$ with itself produces the same result in $\mathcal{O}(2n-1)$; 
that is, if
\[
\omega \circ_i \omega
=
\omega( x_1, \dots, x_{i-1}, \omega( x_i, \dots, x_{i+n-1} ), x_{i+n}, \dots, x_{2n-1} )
\]
does not depend on $i = 1, \dots, n$.
\end{definition}

Since we are concerned exclusively with nonsymmetric operads, for which the subscript 
of an argument represents its index in the sequence of arguments, without ambiguity
we may replace every argument by the generic argument symbol $\ast$.

\begin{definition}
Let $\na_{n,k}$ denote the free nonsymmetric operad generated by $k$ operations of arity $n$.
(We write $\na$ to represent \emph{non-associative}.)
Let $\na_{n,k}(p)$ denote the homogeneous subspace of arity $p$.
Clearly $\na_{n,k}(p) = \{0\}$ if $p \not\equiv 1$ (mod $n-1$).
The quantity $w = (p-1)/(n-1)$ is called the \emph{weight} of the (nonzero) elements
in $\na_{n,k}(p)$, which is the total number of occurrences of the generating operations 
in each basis monomial of $\na_{n,k}(p)$. 
Clearly $p = (n-1)w+1$.
\end{definition}

Figure \ref{namalgorithm} gives an algorithm for generating the monomial basis
of $\na_{n,k}(p)$ recursively by weight $w$.
The order in which the monomials are generated gives a total order on this monomial basis.
The $k$ operations are denoted by $\omega_i$ for $1 \le i \le k$.
Ordered sets (lists) are enclosed within square brackets.
The operation symbols are appended to the end of the lists representing monomials;
thus $x = [ x_1, \dots, x_n, \omega ]$ represents the composition of $n$ 
submonomials using the $n$-ary operation $\omega$.

\begin{figure}[ht]
\begin{itemize}
\item
\texttt{namonomials}[0] := $[ \,\ast\, ]$ (the list containing the generic argument symbol)
\item
for $w$ from 1 to \texttt{maxweight} do:
  \begin{enumerate}
  \item
  \texttt{namonomials}[$w$] := $[\;]$ (the empty list)
  \item
  for $c$ in compositions of $w{-}1$ into $n$ parts (allowing 0) do:
    \begin{enumerate}
    \item
    for $x \in \prod_{i=1}^n \texttt{namonomials}[c_i]$ (Cartesian product) do:
      \begin{enumerate}
      \item
      for $j = 1,\ldots, k$ do:
        \begin{itemize}
        \item[$\ast$]
        $x := [ x_1, \dots, x_n, \omega_j ]$
        \item[$\ast$]
        append $x$ to the list \texttt{namonomials}[$w$]
        \end{itemize}
      \end{enumerate}
    \end{enumerate}
  \end{enumerate}
\end{itemize}
\vspace{-8pt}
\caption{Algorithm for generating nonassociative monomials by weight}
\label{namalgorithm}
\end{figure}

For $k = 1$, the weight $w$ of a monomial $m$ is the number of occurrences of the single
$n$-ary operation $\omega_1$ in $m$.
The number of distinct monomials of weight $w$ 
is given by the Fuss-Catalan number
\[
\dim \na_{n,1}(p) = \frac{1}{p} \binom{nw}{w}.
\]
For $n = 3$ we obtain OEIS sequence A001764 (indexed by $w$ with offset 0):
\[
1, 1, 3, 12, 55, 273, 1428, 7752, 43263, 246675, 1430715, 8414640, 50067108, \dots
\]
For general $k$ there are $k$ choices for each $n$-ary operation, so we have
\[
\dim \na_{n,k}(p) = \frac{k^w}{p} \binom{nw}{w}.
\]


\subsubsection*{Rota-Baxter operads}

\begin{definition}
\label{operatormonomial}
Let $n \ge 2$ and $d \ge 1$.  
We consider \emph{operator monomials} $x$ generated by an indeterminate $\ast$ 
using an associative noncommutative $n$-ary multiplication
together with $d$ pairwise commuting unary operators $P_1$, $P_2$, $\dots$, $P_d$:
\begin{itemize}
\item
$\ast$ is a monomial;
\item
if $x_1$, $x_2$, \dots, $x_n$ are monomials then so is $x_1 x_2 \cdots x_n$
(using the associative $n$-ary multiplication);
\item
if $x$ is a monomial then so is $P_i(x)$ for $1 \le i \le d$.
\end{itemize}
The commutativity condition on the operators implies that for any submonomial 
of the form $P_i( P_j( x ) )$ we may assume that $i \le j$.
The number of occurrences of $\ast$ in $x$ is called the \emph{arity} of $x$.
\end{definition}

\begin{definition}
Let $\au_{n,d}$ denote the nonsymmetric operad generated by an associative $n$-ary operation 
$\omega$ and $d$ unary operators $P_1, \dots, P_d$ which commute in the sense that 
$P_i \circ_1 P_j = P_j \circ_1 P_i$ for $1 \le i < j \le d$.
(We write $\au$ to represent \emph{associative-unary}.)
This operad is multigraded where the homogeneous subspace $\au_{n,d}(p;\mathbf{m})$
of multidegree $(p,\mathbf{m})$ where $\mathbf{m} = (m_1,\dots,m_d)$
has a basis consisting of the operator monomials with arity $p$ 
and multiplicity $m_i$ of $P_i$ for $1 \le i \le d$.
\end{definition}

A dimension formula for $\au_{n,d}(p;\mathbf{m})$ is proved in \S\ref{dimensionformula}.
See the second column of Table \ref{dendriformrewriting} in \S\ref{ternarydendriform}
for the monomial basis of $\au_{3,1}(5;4)$.

\begin{definition}
We consider strings $w$ over the alphabet $A$ consisting of left and right parentheses $($ and $)$
together with $d$ pairs of indexed parentheses $(_i$ and $)_i$ for $1 \le i \le d$.
This gives a total order on $A$.
We call $w$ a (generalized) \emph{Dyck word} if 
\begin{itemize}
\item
every $($ is immediately followed by $)$; such a pair is called a \emph{nesting};
\item
the indexed parentheses are \emph{balanced}: For $1 \le i \le d$,
each initial substring contains at least as many $(_i$ as $)_i$ and the total numbers of $(_i$ and $)_i$ in $w$ are equal. 
\end{itemize}
The total order on Dyck words is induced by the total order on $A$.
Given two Dyck words $v = v_1 \dots v_k$ and $w = w_1 \cdots w_\ell$
where $v_i, w_j \in A$, we write $v \prec w$ if
\begin{itemize}
\item
either $k < \ell$,
\item
or $k = \ell$, $v_1 \cdots v_h = w_1 \cdots w_h$, $v_{h+1} \ne w_{h+1}$,
and $v_{h+1} \prec w_{h+1}$ ($h < \ell$).
\end{itemize}
\end{definition}

Dyck words are a convenient way of representing operator monomials.
We define an injective map $d$ from the set of operator monomials to the set of Dyck words:
\begin{itemize}
\item
$d( {\ast} ) := ( \, )$;
\item
if $x$ is an operator monomial then $d( P_i( x ) ) := (_i \, d(x) \, )_i$;
\item
if $x_1, \dots, x_n$ are operator monomials then $d( x_1 \cdots x_n ) := d(x_1) \cdots d(x_n)$.
\end{itemize}
The order on Dyck words induces an order on operator monomials.
The third column of Table \ref{dendriformrewriting} in \S\ref{ternarydendriform}
gives the Dyck words for the monomial basis of $\au_{3,1}(5;4)$.

\begin{definition}
\label{definitionrotabaxter}
Bai et al.~\cite[Definition 2.1(c)]{BGLW}.
Let $n \ge 2$, let $\omega$ be an $n$-ary operation (not necessarily associative)
denoted $\langle x_1, \dots, x_n \rangle$ on a vector space $V$ and let $\lambda \in \mathbb{F}$.
An \emph{$n$-ary Rota-Baxter operator of weight $\lambda$} 
is a unary operator $R = R_{n,\lambda}\colon V \to V$ which satisfies the relation
\[
\big\langle R(x_1), \dots, R(x_n) \big\rangle
\;
-
\sum_{\emptyset \ne I \subseteq [n]} 
\lambda^{|I|-1} 
R\left(
\big\langle 
\widehat{R}_I (x_1), \dots, \widehat{R}_I(x_n) 
\big\rangle
\right)
= 0,
\]
where
\[
\widehat{R}_I(x_i) :=
\begin{cases}
x_i &\text{if $i \in I$}, \\
R(x_i) &\text{if $i \notin I$}.
\end{cases}
\]
\end{definition}

When $\lambda \ne 0$, this relation is not homogeneous in $R$.
When $\lambda = 0$, only the terms with $|I| = 1$ appear, 
and we obtain the homogeneous relation
\[
\big\langle R(x_1), \dots, R(x_n) \big\rangle
-
\sum_{i=1}^n 
R \big( 
\big\langle R(x_1), \dots, R(x_{i-1}), x_i, R(x_{i+1}), \dots, R(x_n)  \big\rangle 
\big)
= 0.
\]
We will be concerned exclusively with RB operators of weight 0.
For $n = 2$ we obtain the relation of equation \eqref{binaryRB}.

\begin{remark}
Let $D$ be an $n$-ary derivation:
\[
D\left( \prod_{i=1}^n x_i \right)
=
\sum_{j=1}^n \prod_{i=1}^n D^{\delta_{ij}}( x_i ), \qquad
\delta_{ij} = \begin{cases} 1 &\text{if $i=j$} \\ 0 &\text{if $ i \ne j$} \end{cases}
\]
Assume that $D$ is invertible and let $E = D^{-1}$.
We claim that $E$ satisfies the $n$-ary RB relation of weight 0.
Let $x_i = E( y_i )$ for $1 \le i \le n$.
We have
\[
D\left( \prod_{i=1}^n E( y_i ) \right)
=
\sum_{j=1}^n \prod_{i=1}^n E^{1-\delta_{ij}}( y_i ).
\]
Applying $E$ to both sides proves the claim.
\end{remark}

\begin{definition}
\label{RBidealdefinition}
Let $\rb_{n,d}$ denote the nonsymmetric operad generated by an associative
$n$-ary operation $\omega$ and $d$ pairwise commuting unary operators 
$P_1, \dots, P_d$
each of which satisfies the $n$-ary RB relation of weight 0.
Since these RB relations are homogeneous in the operator,
this operad inherits the multigrading of $\au_{n,d}$.
We regard $\rb_{n,d}$ as the quotient of $\au_{n,d}$ modulo 
the ideal $\mathcal{I}_{n,d}$ generated by the RB relations.
We write $s_{n,d} \colon \au_{n,d} \to \au_{n,d} / \mathcal{I}_{n,d}$
for the natural surjection.
\end{definition}

\begin{openproblem}
Find a closed formula for $\dim \rb_{n,d}(p,m_1,\dots,m_d)$
where $p$ is the arity and $m_i$ is the multiplicity of $P_i$ for $1 \le i \le d$.
This dimension is the number of basis monomials for this homogeneous subspace.
For the case $n = 2$, $d = 1$ see Guo and Sit \cite{GuoSit}.
\end{openproblem}


\subsubsection*{Consequences of the Rota-Baxter relation}

In order to find spanning sets for the homogeneous subspaces of the ideal $\mathcal{I}_{n,d}$,
we need an algorithm for generating the consequences of the RB relations with
higher arity and operator multiplicities.
This algorithm is displayed in Figure \ref{generatingconsequences}.

\begin{figure}[ht]
\begin{enumerate}
\item
$\texttt{permlist} := \text{all permutations of the multiset $\mathcal{M}_i$}$ (see text)
\item
For $k = 1, \dots, \text{size}(\mathcal{M}_i)$ do:
  \begin{enumerate}
  \item
  $\texttt{perm} := \texttt{permlist}[k]$
  \item
  $\texttt{relations} := [ \, R_i \, ]$ \; \text{the RB relation for $P_i$}
  \item
  $\texttt{arity} := n$
  \item
  For $p$ in $\texttt{perm}$ do:
    \begin{enumerate}
    \item
    $\texttt{temporary} := [\,]$ (empty list)
    \item
    If $p = A$ then
      \begin{itemize}
      \item
      For $r$ in $\texttt{relations}$ do
        \begin{itemize}
        \item
        For $j = 1,\dots,n$ append $A \circ_j r$ to $\texttt{temporary}$
        \item
        For $j = 1, \dots, \texttt{arity}$ append $r \circ_j A$ to $\texttt{temporary}$
        \end{itemize}
      \item
      $\texttt{arity} := \texttt{arity} + (n-1)$
      \end{itemize}
    \item
    If $p = P_\ell$ for some $\ell = 1,\dots,d$ then
      \begin{itemize}
      \item
      For $r$ in $\texttt{relations}$ do
        \begin{itemize}
        \item
        Append $P_\ell \circ_1 r$ to $\texttt{temporary}$
        \item
        For $j = 1, \dots, \texttt{arity}$ append $r \circ_j P_\ell$ to $\texttt{temporary}$
        \end{itemize}
      \end{itemize}
    \item
    $\texttt{relations} := \texttt{temporary}$
    \end{enumerate}
  \item
  $\texttt{consequences}[i] := \texttt{relations}$
  \end{enumerate}
\end{enumerate}
\vspace{-8pt}
\caption{Algorithm to generate consequences of the RB relation $R_i$}
\label{generatingconsequences}
\end{figure}

The RB relation $R_i$ for $P_i$ belongs to $\au_{n,d}(n;n\mathbf{e}_i)$ 
where $\mathbf{e}_i$ is the $i$-th standard unit vector
(weight 1, multiplicity $n$ in position $i$).
We wish to generate the consequences of $R_1, \dots, R_d$
in $\au_{n,d}(p;\mathbf{m})$ where $p = 1 + k(n-1)$ (weight $k$) and $\mathbf{m} = ( m_1, \dots, m_d )$.
For operator $P_i$, we need to apply partial composition $k-1$ times with $A$
(increasing the weight from 1 to $k$), 
and for each $j = 1, \dots, d$ we need to apply partial composition $m_j - n\delta_{ij}$ times with $P_j$.
We write $\mathcal{M}_i$ for the multiset consisting of $k-1$ copies of $A$
and $m_j - n\delta_{ij}$ copies of $P_j$ for $j = 1, \dots, d$.

\begin{example}
\label{exampledendriformconsequences}
Suppose that $n = 3$ and $d = 1$, and write $P_1 = S$.
The ternary RB relation can be written as $R = 0$ where
\[
R := 
S({\ast})S({\ast})S({\ast}) - S({\ast}S({\ast})S({\ast})) 
- S(S({\ast}){\ast}S({\ast})) - S(S({\ast})S({\ast}){\ast}).
\]
This relation has arity 3 and operator multiplicity 3.
In \S\ref{ternarydendriform} we require the consequences of $R$ with arity $p = 5$, 
multiplicity $m_1 = 4$, 
which lie in the codomain of the restriction of the rewriting morphism
to the quadratic subspace of the domain.
Thus we must apply the associative operation once and the unary operator once;
hence $\mathcal{M} = \{ A, S \}$.
We obtain these consequences in two ways, starting with $R$:
either partial composition with $A$ and then with $S$,
or the reverse.
For permutation $[ A, S ]$ (respectively $[ S, A ]$) we obtain 36 (respectively 24)
consequences; altogether 42 distinct consequences.
We list two consequences each for $[A,S]$ and $[S,A]$:
\begin{align*}
S \circ_1 ( A \circ_1 R ) 
&=
S({\ast}{\ast}S({\ast})S({\ast})S({\ast})) - S({\ast}{\ast}S({\ast}S({\ast})S({\ast})))
\\
&\quad
{}
- S({\ast}{\ast}S(S({\ast}){\ast}S({\ast}))) - S({\ast}{\ast}S(S({\ast})S({\ast}){\ast})),
\\
( R \circ_1 A ) \circ_1 S
&=
S(S({\ast}){\ast}{\ast})S({\ast})S({\ast}) - S(S({\ast}){\ast}{\ast}S({\ast})S({\ast})) 
\\
&\quad
{}- S(S(S({\ast}){\ast}{\ast}){\ast}S({\ast})) - S(S(S({\ast}){\ast}{\ast})S({\ast}){\ast}),
\\
A \circ_1 ( S \circ_1 R ) 
&=
S(S({\ast})S({\ast})S({\ast})){\ast}{\ast} - S(S({\ast}S({\ast})S({\ast}))){\ast}{\ast} 
\\
&\quad
{}
- S(S(S({\ast}){\ast}S({\ast}))){\ast}{\ast} - S(S(S({\ast})S({\ast}){\ast})){\ast}{\ast},
\\
( R \circ_1 S ) \circ_1 A
&=
S(S({\ast}{\ast}{\ast}))S({\ast})S({\ast}) - S(S({\ast}{\ast}{\ast})S({\ast})S({\ast})) 
\\
&\quad
{}
- S(S(S({\ast}{\ast}{\ast})){\ast}S({\ast})) - S(S(S({\ast}{\ast}{\ast}))S({\ast}){\ast}).
\end{align*}
\end{example}

\begin{example}
\label{multisetexample}
Suppose $n = 3$, $d = 2$, and write $P_1 = T$, $P_2 = S$.
In \S\ref{ternaryquadrialgebras} we require the consequences of $R$ with $p = 5$ and $(m_1,m_2) = (4,4)$.
Hence $\mathcal{M}_1 = \{ A, S, S, S, S, T \}$ and $\mathcal{M}_2 = \{ A, S, T, T, T, T \}$;
each multiset has 30 permutations.
\end{example}


\subsubsection*{The rewriting morphism}

\begin{definition}
We consider the general case of an $n$-ary operation together
with $d$ commuting unary operators.
We define the \emph{rewriting morphism} $r_{n,d} \colon \na_{n,k} \to \au_{n,d}$ where $k = n^d$.
Since $\na_{n,k}$ is a free operad, it suffices to define $r_{n,d}$ on the $n^d$ generating
operations: the monomial basis of $\na_{n,k}(n)$ denoted by
$\omega_{\ell_1,\dots,\ell_d}$
where
$1 \le \ell_1,\dots,\ell_d \le n$.
The image of $\omega_{\ell_1,\dots,\ell_d}$ under $r_{n,d}$ is an operator monomial in 
$\au_{n,d}(n;\mathbf{m})$ where $\mathbf{m} = ( n{-}1, \dots, n{-}1 )$.
We define $r_{n,d}( \omega_{\ell_1,\dots,\ell_d} )$ to be the monomial in which
operator $P_i$ ($1 \le i \le d$) is applied to each argument \emph{except} $\ell_i$:
\[
r_{n,d}( \omega_{\ell_1,\dots,\ell_d} )
:=
\prod_{i=1}^n \left( \prod_{j=1}^d P_j^{\epsilon_{ij}} \right) ( x_i ),
\qquad
\epsilon_{ij} 
=
\begin{cases}
\; 0 &\text{if $i = \ell_j$;} \\
\; 1 &\text{otherwise.}
\end{cases}
\]
\end{definition}

\begin{definition}
\label{compositiondefinition}
The composition $f_{n,d} := s_{n,d} \circ r_{n,d}$ of the rewriting morphism
with the surjection is an operad morphism from 
the free operad $\na_{n,k}$ ($k = n^d$) to the Rota-Baxter operad $\rb_{n,d}$.
The kernel of $f_{n,d}$ contains all the relations satisfied by the rewritings in
$\au_{n,d}$ of the generating operations of the free operad $\na_{n,k}$;
that is, the nonassociative relations implied by the Rota-Baxter relation(s).
In what follows our computations will be exclusively concerned with the restriction of $f_{n,d}$ 
to the quadratic (weight 2) subspace of its domain.
\end{definition}

\begin{example}
\label{rewritingternarydendriform}
If $n = 3$ and $d = 1$ we write $S = P_1$ and obtain
\[
r_{3,1}( \omega_1 ) = x S(y) S(z),
\qquad
r_{3,1}( \omega_2 ) = S(x) y S(z),
\qquad
r_{3,1}( \omega_3 ) = S(x) S(y) z.
\]
\end{example}

\begin{example}
\label{rewritingquadrialgebras}
If $n = 3$ and $d = 2$ we write $T = P_1$, $S = P_2$, $r = r_{3,2}$ and obtain
\begin{alignat*}{3}
r( \omega_{11} ) &= x TS(y) TS(z),
&\quad
r( \omega_{12} ) &= T(x) S(y) TS(z),
&\quad
r( \omega_{13} ) &= T(x) TS(y) S(z),
\\
r( \omega_{21} ) &= S(x) T(y) TS(z),
&\quad
r( \omega_{22} ) &= TS(x) y TS(z),
&\quad
r( \omega_{23} ) &= TS(x) T(y) S(z),
\\
r( \omega_{31} ) &= S(x) TS(y) T(z),
&\quad
r( \omega_{32} ) &= TS(x) S(y) T(z),
&\quad
r( \omega_{33} ) &= TS(x) TS(y) z.
\end{alignat*}
\end{example}

For $d = 3$ commuting operators we obtain 27 ternary operations
whose quadratic relations define the ternary analog of the octo-algebras of Leroux \cite{Leroux}.


\section{A dimension formula for $\au_{n,d}$}
\label{dimensionformula}

The results of this section form part of a research program on the combinatorics of
multi-operator monomials 
for $n$-ary algebras; see two recent preprints by the authors \cite{AB1,AB2}.
For related work on operated semigroups, see Guo \cite{Guo1}; see also
Bokut et al.~\cite{BCQ} and Liu et al.~\cite{LQQZ} for associative algebras
with multiple operators.

For an operator monomial $x$ (Definition~\ref{operatormonomial}), let
$r=\deg(x)$ 
be its arity, that is, the number of occurrences of the argument symbol $\ast$. Also, let
\[
\mathbf{m}=(m_1,\dots,m_d)\in\mathbb{Z}_{\geq 0}^d
\]
record the multiplicities of the unary operators $P_1,\dots,P_d$.
We write
\[
a_{n,d}(r;\mathbf{m})
=
\dim \au_{n,d}(r;\mathbf{m})
\]
for the number of operator monomials of arity $r$ and multiplicity vector
$\mathbf{m}$.

We count these monomials in two stages.  First, we record the associative
multiplication structure by an ordered rooted tree.  Second, we distribute
canonically ordered unary-operator chains among the vertices of this tree.
Associativity imposes a condition on these trees: every non-root internal
multiplication vertex must be separated from its parent by a nonempty unary chain.

By a \emph{unary chain} we mean a sequence of unary operators
$P_{i_1}P_{i_2}\cdots P_{i_k}$
applied consecutively to a submonomial.  A unary chain is \emph{maximal} if it is
not properly contained in a longer chain of unary operators.
Since the operators commute, every chain has a unique canonical
representative in which
$i_1\leq i_2\leq\cdots\leq i_k$.
Thus a unary chain is determined by the multiplicities of the operators occurring
in it.


\subsubsection*{Multiplication skeletons}

Begin with the expression tree of an operator monomial.  Decorate each
edge by the maximal unary chain occurring on that edge, using the empty
chain when no unary operator occurs there.  Next, contract every edge
joining a non-root multiplication vertex to its parent multiplication
vertex whenever that edge has the empty decoration. Associativity ensures that
this contraction does not change the represented operator monomial.  After the
contractions, temporarily forget the unary decorations.  The resulting ordered
rooted tree is called the \emph{multiplication skeleton}.  Its leaves represent
occurrences of $\ast$, and its internal vertices represent maximal associative
products.
Every internal vertex has outdegree
$1+(n-1)j$ for some $j\geq 1$,
because a composite containing $j$ occurrences of an $n$-ary multiplication has
$1+(n-1)j$ inputs.

Let $T_n(r,I)$ denote the number of multiplication skeletons with $r$ leaves and
$I$ internal multiplication vertices.  Necessarily,
$r\equiv 1 \pmod{n-1}$.
When this congruence holds, put
$w=(r-1)/(n-1)$.
Thus $w$ is the weight: before associativity is imposed, it is the number of
occurrences of the generating $n$-ary multiplication.  After adjacent
multiplication vertices are collapsed, the multiplication skeleton may have fewer
internal vertices, so
$1\leq I\leq w$.
The exceptional case $r=1$ consists of a single leaf and has $I=0$.

\begin{lemma}
\label{lemmaT}
Let $r>1$.  If $r\not\equiv 1\pmod{n-1}$, then $T_n(r,I)=0$.
Suppose that
\[
w=\frac{r-1}{n-1}\in\mathbb{Z}_{>0}.
\]
Then
\[
T_n(r,I)
=
\frac{1}{r+I}
\binom{r+I}{r}
\binom{w-1}{I-1},
\qquad
1\leq I\leq w,
\]
and $T_n(r,I)=0$ for all other values of $I$.
\end{lemma}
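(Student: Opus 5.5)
The plan is to treat multiplication skeletons purely as ordered (plane) rooted trees and count them by their outdegree sequences. By construction, a skeleton with $r$ leaves and $I$ internal vertices is an ordered rooted tree in which every internal vertex has outdegree $1+(n-1)j$ for some $j \ge 1$. Conversely, every such tree is realized as the skeleton of some operator monomial: decorate every edge from a non-root internal vertex to its parent with a nonempty unary chain, so no contraction occurs. Since we forget decorations, $T_n(r,I)$ is exactly the number of ordered rooted trees with $r$ leaves and $I$ internal vertices whose outdegrees are all of this form. I will take this identification as the first step, since it is immediate from the description preceding the lemma.

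Next comes the arithmetic constraint. Suppose the internal vertices have outdegrees $1+(n-1)j_v$. Counting edges in two ways gives
\[
r + I - 1 = \sum_v \bigl(1+(n-1)j_v\bigr),
\]
so $r-1 = (n-1)\sum_v j_v$. Hence $T_n(r,I)=0$ unless $r \equiv 1 \pmod{n-1}$. When the congruence holds, we need $\sum_v j_v = w$ with each $j_v \ge 1$, which forces $1 \le I \le w$ and gives vanishing otherwise.

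The main step is to count plane trees with a prescribed outdegree multiset. I will use the standard cycle-lemma (Lagrange inversion) count: the number of ordered rooted trees with $N$ vertices, of which $m_k$ have outdegree $k$ (with $\sum_k m_k = N$ and $\sum_k k\,m_k = N-1$), equals
\[
\frac{1}{N}\binom{N}{m_0, m_1, m_2, \dots}.
\]
Here $N = r+I$ and $m_0 = r$. Write $\mu_j$ for the number of internal vertices with parameter $j$. The count for a fixed multiset is then
\[
\frac{1}{r+I}\,\frac{(r+I)!}{r!\,\prod_j \mu_j!}
=
\frac{1}{r+I}\binom{r+I}{r}\,\frac{I!}{\prod_j \mu_j!}.
\]

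Finally, sum over all multisets $\{j_v\}$ of $I$ positive integers with sum $w$. The factor $I!/\prod_j \mu_j!$ is the number of ordered sequences $(j_1,\dots,j_I)$ realizing that multiset. The sum is therefore the number of compositions of $w$ into $I$ positive parts, namely $\binom{w-1}{I-1}$, which yields the stated formula.

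The only real obstacle is justifying the cycle-lemma count. I would either cite it as standard, or sketch it: encode a plane tree by its preorder Łukasiewicz word in steps $k-1$. Among the $N$ cyclic rotations of any word with the given step multiset, exactly one is a valid tree code, and the rotations are distinct because $\gcd$ considerations are unnecessary once the total sum is $-1$.
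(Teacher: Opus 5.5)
Your proof is correct and follows essentially the same route as the paper: both encode skeletons by preorder {\L}ukasiewicz words, use the total sum $-1$ (your edge count) to get $\sum_v j_v = w$, and apply the cycle lemma. The only difference is bookkeeping. You apply the cycle lemma once per outdegree multiset and then sum the multinomials back to $\binom{r+I}{r}\binom{w-1}{I-1}$, whereas the paper counts all unrestricted sequences at once (an ordered composition of $w$ into $I$ parts times a placement of the $r$ entries $-1$) and divides by $r+I$.
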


\begin{proof}
Set
$N=r+I$.
We use the standard correspondence between ordered rooted trees and {\L}ukasiewicz words; 
see, for example, \cite[\S5.3]{StanleyEC2}. 
Traverse a multiplication skeleton in preorder, and let $d_k$ be the outdegree of its $k$-th vertex.
The sequence 
$(d_1-1,\dots,d_N-1)$
is called its {\L}ukasiewicz word. It has total sum $-1$, and every proper initial sum is nonnegative. Conversely, every integer sequence with these properties is the preorder outdegree sequence of a unique ordered rooted tree.

In a multiplication skeleton, a leaf has outdegree $0$, and an internal vertex has outdegree 
$1+(n-1)j$
for some $j \ge 1$. Thus the {\L}ukasiewicz 
sequence contains $r$ entries equal to $-1$ and $I$ positive entries of the form
\[
(n-1)j_1, \, \dots, \, (n-1)j_I,
\qquad
j_1, \dots, j_I\geq 1.
\]
Since the total sum is $-1$, we have
\[
-r+(n-1)(j_1+\cdots+j_I)=-1.
\]
Therefore
\[
j_1+\cdots+j_I
=
\frac{r-1}{n-1}
=
w.
\]
The number of ordered compositions of $w$ into $I$ positive parts is
$\binom{w-1}{I-1}$.
For each such composition, the $r$ entries equal to $-1$ may be placed
among the $r+I$ positions in
$\binom{r+I}{r}$
ways. Hence the total number of unrestricted sequences is
\[
\binom{r+I}{r}
\binom{w-1}{I-1}.
\]
It remains to determine which cyclic shifts of these unrestricted
sequences are {\L}ukasiewicz words.  Fix one such sequence
$(a_1,\dots,a_N)$ with $N=r+I$,
and define its partial sums by
$s_0=0$ and 
$s_k=a_1+\cdots+a_k$
for
$1\leq k\leq N$.
Since the entries have total sum $-1$, we have $s_N=-1$. Let $j$ be the first index at which the minimum of
$s_0,s_1,\dots,s_{N-1}$
is attained.  We claim that the cyclic shift
$(a_{j+1},\dots,a_N,a_1,\dots,a_j)$
has all proper initial sums nonnegative. For an initial segment that does not pass through the end of the original sequence, its sum has the form
$s_k-s_j$
for some $j<k\leq N-1$, and this is nonnegative by the choice of $j$.
For an initial segment that passes through the end of the original
sequence, its sum has the form
$(s_N-s_j)+s_k
=
-1-s_j+s_k$
for some $0\leq k<j$.  Since $j$ is the first index at which the
minimum is attained, we have $s_k>s_j$.  The partial sums are integers,
so $s_k\geq s_j+1$, and hence
$-1-s_j+s_k\geq0$.
Thus this cyclic shift is a {\L}ukasiewicz word. Conversely, suppose that the cyclic shift beginning with $a_{j+1}$ has all proper initial sums nonnegative.  The initial sums before
wrapping around imply 
$s_k\geq s_j$
where
$j<k\leq N-1$,
while those after wrapping around imply
$-1-s_j+s_k\geq0$
for $0\leq k<j$.
Thus $s_k>s_j$ for $k<j$ and $s_k\geq s_j$ for $k>j$, so $j$ is the
first index at which the minimum of
$s_0,\dots,s_{N-1}$ is attained.  Therefore the valid cyclic shift is
unique.

Moreover, the sequence cannot have a nontrivial period.  Indeed, if it
were obtained by repeating a shorter block $q>1$ times, then its total
sum would be divisible by $q$, which is impossible because its total
sum is $-1$.  Thus every cyclic orbit has exactly $r+I$ elements.
Consequently, every cyclic orbit of unrestricted sequences has
$r+I$ elements and contains exactly one {\L}ukasiewicz word.  Dividing
the number of unrestricted sequences by $r+I$ proves the claim.
\end{proof}


\subsubsection*{Adding commuting unary operators}

Now fix a multiplication skeleton with $r$ leaves and $I$ internal multiplication
vertices.  It has $r+I$ vertices in total.  We regard each vertex as a possible
location for a maximal unary chain, attached immediately above that vertex.
The chain at the root may be empty, as may the chains at the leaves.  Every
non-root internal multiplication vertex, however, must carry a nonempty unary
chain.  Otherwise that vertex would be adjacent to its parent multiplication
vertex with no unary operator separating them, and associativity would collapse
the two vertices into a single multiplication vertex.  Thus the $I-1$ non-root
internal vertices are constrained to carry nonempty chains, while the root and
the $r$ leaves give $r+1$ unconstrained locations.

For a fixed skeleton, let $U_d(r,I;\mathbf{m})$ denote the number of ways to
distribute the commuting unary operators with total multiplicity vector
$\mathbf{m}$ among the vertices of the skeleton, subject to the condition that
every non-root internal multiplication vertex receives a nonempty chain. 
We compute this number by inclusion--exclusion.

\begin{lemma}
\label{lemmaU}
For a fixed multiplication skeleton with $r$ leaves and $I$ internal
vertices, the number of admissible unary-chain assignments with total
multiplicity vector
$\mathbf{m}=(m_1,\dots,m_d)$
is
\[
U_d(r,I;\mathbf{m})
=
\sum_{k=0}^{I-1}
(-1)^k
\binom{I-1}{k}
\prod_{j=1}^d
\binom{m_j+r+I-k-1}{r+I-k-1}.
\]
\end{lemma}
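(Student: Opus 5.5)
The plan is to count unrestricted assignments first and then enforce the nonemptiness constraints by inclusion--exclusion over the $I-1$ constrained vertices. The skeleton has $N=r+I$ vertices, and each vertex carries one maximal unary chain. Since the operators commute, a chain is determined by its multiplicity vector $(c_1,\dots,c_d)\in\mathbb{Z}_{\ge 0}^d$, as noted above. An assignment is therefore a function from the vertex set to $\mathbb{Z}_{\ge0}^d$ whose values sum to $\mathbf{m}$. This condition splits coordinatewise: for each $j$ we must choose a weak composition of $m_j$ into $N$ ordered parts, one part per vertex, and these choices are independent across $j$.

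First step: count assignments in which a prescribed set $K$ of $k$ vertices is forced to carry the empty chain, with no condition on the other vertices. The remaining $N-k$ vertices then receive arbitrary chains. For each $j$, the number of weak compositions of $m_j$ into $N-k$ parts is $\binom{m_j+N-k-1}{N-k-1}$ by stars and bars. Taking the product over $j$ gives
\[
\prod_{j=1}^d \binom{m_j+r+I-k-1}{r+I-k-1}.
\]
This count depends only on $k$, not on which vertices lie in $K$.

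Second step: apply inclusion--exclusion. Let $V'$ be the set of $I-1$ non-root internal vertices, and for $v\in V'$ let $E_v$ be the set of assignments in which $v$ carries the empty chain. An admissible assignment is one lying in none of the $E_v$. The intersection $\bigcap_{v\in K}E_v$ for $K\subseteq V'$ is exactly the set counted in the first step. There are $\binom{I-1}{k}$ subsets of size $k$, so the alternating sum over $k=0,\dots,I-1$ gives the stated formula.

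The main point needing care is to justify that these assignments are in bijection with the operator monomials that have this skeleton. Each placement of chains gives a distinct monomial, because the maximal chains and the skeleton can be recovered from the monomial, as in the contraction construction. Nonemptiness at non-root internal vertices is exactly the condition that keeps the skeleton unchanged. The constraint matters only for the count in the subsequent theorem, so for this lemma it suffices to note that "unary chain at a vertex" is a well-defined multiset of operators. The degenerate case $N-k-1=0$ only arises when $r+I-k=1$, which cannot occur for $r>1$, so the binomials are well-defined and equal $1$ when $m_j=0$.
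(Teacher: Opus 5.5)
Your proposal is correct and follows the paper's proof essentially step by step. You encode each chain by its multiplicity vector, count assignments with a prescribed set of $k$ constrained vertices forced empty as a product over $j$ of stars-and-bars binomials on the remaining $r+I-k$ vertices, and apply inclusion--exclusion over the $I-1$ non-root internal vertices, exactly as the paper does; your extra remarks on the bijection with monomials and on the nondegeneracy of the binomials are harmless additions.
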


\begin{proof}
Temporarily label the $I-1$ non-root internal vertices
$v_1,\dots,v_{I-1}$,
and let $E_h$ be the event that the unary chain at $v_h$ is empty.
The final count is independent of the chosen labeling. 
Suppose that a specified set of $k$ constrained vertices is required
to carry empty chains.  The unary operators may then be distributed
among the remaining
$r+I-k$
vertices.

For each $j=1,\dots,d$, distributing the $m_j$ indistinguishable
copies of $P_j$ among these $r+I-k$ vertices is equivalent to choosing
a weak composition
\[
m_j=b_1+\cdots+b_{r+I-k},
\qquad
b_\ell\geq0.
\]
The number of such weak compositions is
\[
\binom{m_j+r+I-k-1}{r+I-k-1}.
\]
The choices for the different operator types are independent.
Moreover, since each unary chain is written in canonical order, the
numbers of copies of $P_1,\dots,P_d$ assigned to a vertex uniquely
determine the chain at that vertex.
There are
$\binom{I-1}{k}$
ways to choose the $k$ constrained vertices that carry empty chains.
Inclusion--exclusion over the events
$E_1,\dots,E_{I-1}$ therefore gives the stated formula.
\end{proof}

\begin{theorem}
\label{theoremenumeration}
Let $n\geq 2$, $d\geq 1$, and let
$\mathbf{m}=(m_1,\dots,m_d)\in\mathbb{Z}_{\geq 0}^d$.
If $r=1$ then
$a_{n,d}(1;\mathbf{m})=1$.
If $r>1$ and
$r\not\equiv 1\pmod{n-1}$
then
$a_{n,d}(r;\mathbf{m})=0$.
Suppose that $r>1$ and
$w=(r-1)/(n-1)\in\mathbb{Z}_{>0}$.
Then
\begin{align*}
&
a_{n,d}(r;\mathbf{m})
=
\\
&
\sum_{I=1}^{w}
\frac{1}{r+I}
\binom{r+I}{r}
\binom{w-1}{I-1}
\sum_{k=0}^{I-1}
(-1)^k
\binom{I-1}{k}
\prod_{j=1}^d
\binom{m_j+r+I-k-1}{r+I-k-1}.
\end{align*}
\end{theorem}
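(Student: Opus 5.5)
The proof combines Lemma~\ref{lemmaT} and Lemma~\ref{lemmaU} through a bijection. Operator monomials of arity $r$ and multiplicity $\mathbf{m}$ correspond to pairs consisting of a multiplication skeleton and an admissible unary-chain assignment on it. Once this bijection is in place, one sums $T_n(r,I)\,U_d(r,I;\mathbf{m})$ over $I$, which gives the stated double sum directly.

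The degenerate cases come first. For $r=1$ the skeleton is a single leaf with $I=0$. A monomial of arity $1$ contains no multiplication, so it is a single unary chain applied to $\ast$. Because the operators commute, this chain is determined by its multiplicity vector, and therefore $a_{n,d}(1;\mathbf{m})=1$. For $r>1$ with $r\not\equiv 1 \pmod{n-1}$, every product of $j$ copies of the $n$-ary multiplication has arity $1+(n-1)j$, and unary operators do not change arity. So no monomial exists, which agrees with the vanishing case of Lemma~\ref{lemmaT}.

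The main step is the bijection. In the forward direction, a monomial is sent to its skeleton, obtained by the contraction described before Lemma~\ref{lemmaT}. Each vertex is decorated with the canonical chain recorded on the edge above it; at the root, this is the outermost chain. Contraction happens exactly along the empty edges between multiplication vertices, so every surviving non-root internal vertex carries a nonempty chain, and the decoration is admissible. In the reverse direction, given a skeleton and an admissible assignment, one rebuilds the monomial recursively. An internal vertex of outdegree $1+(n-1)j$ becomes the associative product of its children's expressions, which is well defined in $\au_{n,d}$ by associativity, and the vertex's chain is then applied to that product.

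The obstacle is checking that the two maps are mutually inverse. Concretely, two different admissible decorated skeletons must never produce the same element of the monomial basis of $\au_{n,d}$. I would argue this using the normal form already implicit in Definition~\ref{operatormonomial}. A basis monomial is a flattened associative word whose factors are either $\ast$ or $P$-chains applied to basis monomials, with each chain in canonical order. Its maximal products and maximal chains can be read off uniquely; the Dyck-word encoding $d$, which is injective, makes this precise. Reading them off gives back the skeleton and the decoration. The nonemptiness condition is exactly what rules out two adjacent products that were never merged, so the decomposition is unique.

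Finally, I fix a skeleton with $r$ leaves and $I$ internal vertices. By Lemma~\ref{lemmaU} the number of admissible decorations is $U_d(r,I;\mathbf{m})$, and this depends only on $(r,I)$, not on the shape of the skeleton. Grouping skeletons by $I$, with $1\le I\le w$, and multiplying by $T_n(r,I)$ from Lemma~\ref{lemmaT} gives $a_{n,d}(r;\mathbf{m})=\sum_{I=1}^w T_n(r,I)\,U_d(r,I;\mathbf{m})$. Substituting the two closed forms yields the formula in the statement.
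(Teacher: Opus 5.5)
Your proposal is correct and matches the paper's argument. You handle $r=1$ and the non-congruent case directly, then write $a_{n,d}(r;\mathbf{m})=\sum_{I=1}^{w}T_n(r,I)\,U_d(r,I;\mathbf{m})$ and substitute Lemmas~\ref{lemmaT} and~\ref{lemmaU}; the only difference is that you explicitly justify the bijection between monomials and admissibly decorated skeletons, which the paper leaves implicit in the discussion preceding the two lemmas.
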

\begin{proof}
If $r=1$, the unique operator monomial is
$P_1^{m_1}\cdots P_d^{m_d}(\ast)$,
where the operators are written in canonical order.

Now suppose that $r>1$.  Every application of an $n$-ary
multiplication increases the arity by $n-1$, so no monomial of arity
$r$ exists unless
$r\equiv 1\pmod{n-1}$.
Assume that
$w=(r-1)/(n-1)$
is a positive integer.  For each $I$ with $1\leq I\leq w$, there are
$T_n(r,I)$ multiplication skeletons with $r$ leaves and $I$ internal
vertices.  For each such skeleton, there are
$U_d(r,I;\mathbf{m})$ admissible assignments of unary chains with
total multiplicity vector $\mathbf{m}$.  Therefore
\[
a_{n,d}(r;\mathbf{m})
=
\sum_{I=1}^{w}
T_n(r,I)\,U_d(r,I;\mathbf{m}).
\]
Substituting the formulas from Lemmas~\ref{lemmaT} and \ref{lemmaU} gives the stated result.
\end{proof}

We also describe a compact coefficient-extraction form of Theorem~\ref{theoremenumeration}.

\begin{corollary}
\label{corollarycoefficientform}
Let
\[
\mathbf{x}^{\mathbf{m}}
=
x_1^{m_1}\cdots x_d^{m_d},
\qquad
A(\mathbf{x})
=
\prod_{j=1}^d\frac{1}{1-x_j}.
\]
If $r>1$ and
$w=\frac{r-1}{n-1}\in\mathbb{Z}_{>0}$,
then
\[
a_{n,d}(r;\mathbf{m})
=
[\mathbf{x}^{\mathbf{m}}]\,
A(\mathbf{x})^{r+1}
\sum_{I=1}^{w}
T_n(r,I)
\bigl(A(\mathbf{x})-1\bigr)^{I-1}.
\]
\end{corollary}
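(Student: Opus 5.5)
The plan is to start from the decomposition already used in the proof of Theorem~\ref{theoremenumeration}, namely
\[
a_{n,d}(r;\mathbf{m})=\sum_{I=1}^{w}T_n(r,I)\,U_d(r,I;\mathbf{m}),
\]
and to show that, for each fixed $I$, the factor $U_d(r,I;\mathbf{m})$ equals the coefficient $[\mathbf{x}^{\mathbf{m}}]\,A(\mathbf{x})^{r+1}\bigl(A(\mathbf{x})-1\bigr)^{I-1}$. Since $T_n(r,I)$ does not depend on $\mathbf{x}$, linearity of coefficient extraction then moves the sum over $I$ inside the bracket and yields the stated formula.

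For the key identity I will not manipulate the inclusion--exclusion sum of Lemma~\ref{lemmaU}. Instead I will read $U_d$ directly as a generating-function count. A single vertex carrying an arbitrary canonical unary chain contributes the generating function
\[
\sum_{\mathbf{b}\in\mathbb{Z}_{\ge0}^d}\mathbf{x}^{\mathbf{b}}=\prod_j(1-x_j)^{-1}=A(\mathbf{x}).
\]
This uses the fact, noted in the proof of Lemma~\ref{lemmaU}, that a canonical chain is determined by its multiplicity vector. A vertex required to carry a \emph{nonempty} chain contributes $A(\mathbf{x})-1$, since only the empty chain is excluded. The assignments at distinct vertices are independent, and multiplicity vectors add. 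So the generating function of admissible assignments on a fixed skeleton is the product over its vertices: $r+1$ unconstrained vertices (the root and the $r$ leaves) and $I-1$ constrained ones. This gives $A^{r+1}(A-1)^{I-1}$, and $U_d(r,I;\mathbf{m})$ is its $\mathbf{x}^{\mathbf{m}}$-coefficient.

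As a consistency check, I will also verify the identity algebraically. Expand $(A-1)^{I-1}=\sum_k(-1)^k\binom{I-1}{k}A^{I-1-k}$, which gives $\sum_k(-1)^k\binom{I-1}{k}A^{r+I-k}$. Then use
\[
[\mathbf{x}^{\mathbf{m}}]A^{N}=\prod_j[x_j^{m_j}](1-x_j)^{-N}=\prod_j\binom{m_j+N-1}{N-1}.
\]
Taking $N=r+I-k$ reproduces Lemma~\ref{lemmaU} exactly.

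I do not expect any step to be a real obstacle. The only point needing care is the counting of unconstrained locations: the root together with the $r$ leaves must give exactly $r+1$ factors of $A$. This matches the exponent $r+I-k-1$ in Lemma~\ref{lemmaU}.
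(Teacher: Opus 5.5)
Your proposal is correct and follows essentially the same route as the paper. You read $A(\mathbf{x})$ and $A(\mathbf{x})-1$ as generating functions for arbitrary and nonempty canonical chains on the $r+1$ unconstrained and $I-1$ constrained vertices, then multiply by $T_n(r,I)$ and sum over $I$. Your additional binomial-expansion check, which recovers Lemma~\ref{lemmaU}, is a valid confirmation but not needed for the argument.
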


\begin{proof}
The series $A(\mathbf{x})$ is the generating function for an arbitrary
commuting unary chain, including the empty chain, while
$A(\mathbf{x})-1$ is the generating function for a nonempty commuting
unary chain.
A multiplication skeleton with $r$ leaves and $I$ internal vertices
has $r+1$ unconstrained vertices and $I-1$ constrained vertices.
Consequently, its unary-chain contribution is
\[
[\mathbf{x}^{\mathbf{m}}]\,
A(\mathbf{x})^{r+1}
\bigl(A(\mathbf{x})-1\bigr)^{I-1}.
\]
Multiplying by $T_n(r,I)$ and summing over $I$ proves the result.
\end{proof}

\begin{example}
\label{dimauexample1}
Let $n=3$ and $r=5$.  Then $w=2$, and Lemma~\ref{lemmaT} gives
$T_3(5,1)=1$ and $T_3(5,2)=3$.
Thus
\[
a_{3,d}(5;\mathbf{m})
=
[\mathbf{x}^{\mathbf{m}}]
\left(
A(\mathbf{x})^6
+3A(\mathbf{x})^6\bigl(A(\mathbf{x})-1\bigr)
\right)
=
[\mathbf{x}^{\mathbf{m}}]
\left(3A(\mathbf{x})^7-2A(\mathbf{x})^6\right).
\]
For one unary operator,
\[
a_{3,1}(5;m)
=
3\binom{m+6}{6}-2\binom{m+5}{5},
\]
and hence
$a_{3,1}(5;4) = 378$.
For two commuting unary operators,
\[
a_{3,2}(5;m_1,m_2)
=
3\prod_{j=1}^2\binom{m_j+6}{6}
-
2\prod_{j=1}^2\binom{m_j+5}{5},
\]
and hence
$a_{3,2}(5;4,4)
=
100548$.
\end{example}

\begin{example}
\label{dimauexample2}
The following symmetric array gives $\dim \au_{3,2}(5;m_1,m_2)$:
\[
\begin{array}{l|rrrrrrr}
{}_{m_1}\backslash^{m_2} & 0 & 1 & 2 & 3 & 4 & 5 & 6 \\ \midrule
0 & 1 & 9 & 42 & 140 & 378 & 882 & 1848 \\
1 & 9 & 75 & 336 & 1092 & 2898 & 6678 & 13860 \\
2 & 42 & 336 & 1470 & 4704 & 12348 & 28224 & 58212 \\
3 & 140 & 1092 & 4704 & 14896 & 38808 & 88200 & 181104 \\
4 & \boxed{378} & 2898 & 12348 & 38808 & \boxed{100548} & 227556 & 465696 \\
5 & 882 & 6678 & 28224 & 88200 & 227556 & 513324 & 1047816 \\
6 & 1848 & 13860 & 58212 & 181104 & 465696 & 1047816 & 2134440
\end{array}
\]
The boxed values
\[
\dim \au_{3,2}(5;4,0) = 378 = \dim \au_{3,1}(5;4),
\qquad
\dim \au_{3,2}(5;4,4)=100548,
\]
are the dimensions of the codomains of the
restrictions of the rewriting morphisms to the quadratic subspaces in \S\ref{ternarydendriform} and
\S\ref{ternaryquadrialgebras}, respectively.  The first row of this table appears as
OEIS A051923, with formula $\frac{n+2}{2}\binom{n+5}{5}$; these numbers are the
partial sums of A051836, the 5-dimensional version of the pentagonal-based
pyramidal numbers.  In this row $m_1=0$, so only one operator is present.  At the
time of writing, the other rows were not in the OEIS \cite{OEIS}.
\end{example}


\section{Two propositions on $n$-ary Rota-Baxter operads}
\label{twopropositions}

Recall the $n$-ary RB relation of weight 0 from Definition \ref{definitionrotabaxter},
which can be written as follows assuming that the $n$-ary operation is associative:
\[
\prod_{i=1}^n R(x_i) 
=
\sum_{i=1}^n R \left( R(x_1) \cdots \widehat{R}(x_i) \cdots R(x_n) \right),
\]
where $\widehat{R}$ indicates that the operator is missing from this factor.
This suggests the following $n$ new $n$-ary operations, by analogy with the binary case:
\begin{equation}
\label{noperations}
X_i := R(x_1) \cdots \widehat{R}(x_i) \cdots R(x_n) \qquad (1 \le i \le n).
\end{equation}

\begin{proposition}
\label{proposition1}
Let $R$ be the binary Rota-Baxter operator in the operad $\rb_{2,1}$.
Then $R$ also satisfies the $n$-ary RB relation for all $n \ge 2$.
This gives a morphism of operads $f_n \colon \rb_{n,1} \to \rb_{2,1}$.
\end{proposition}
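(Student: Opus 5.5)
Prove by induction on $n$ that in $\rb_{2,1}$, with the associative binary product written as juxtaposition,
\[
R(x_1)\cdots R(x_n) = \sum_{i=1}^n R\bigl(R(x_1)\cdots \widehat{R}(x_i)\cdots R(x_n)\bigr),
\]
where the $n$-ary product is the iterated binary product $\omega_n(x_1,\dots,x_n) = x_1 x_2\cdots x_n$. The base case $n=2$ is exactly the binary relation \eqref{binaryRB}.

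For the inductive step, write $Y := R(x_1)\cdots R(x_{n-1})$. By the induction hypothesis $Y = R(Z)$, where
\[
Z = \sum_{i=1}^{n-1} R(x_1)\cdots\widehat{R}(x_i)\cdots R(x_{n-1}).
\]
Then $R(x_1)\cdots R(x_n) = R(Z)R(x_n)$. Applying \eqref{binaryRB} to the pair $(Z, x_n)$ gives
\[
R(Z)R(x_n) = R\bigl(Z R(x_n)\bigr) + R\bigl(R(Z) x_n\bigr).
\]
In the first term, $Z R(x_n)$ expands by associativity to $\sum_{i=1}^{n-1} R(x_1)\cdots\widehat{R}(x_i)\cdots R(x_n)$. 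In the second term, $R(Z) = Y$, so $R(Z)x_n = R(x_1)\cdots R(x_{n-1})x_n$, which is the $i=n$ summand. Adding the two terms gives the $n$-ary relation.

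The only point needing care is that these manipulations are valid in the operad rather than merely in individual algebras. Each step is an instance of \eqref{binaryRB} composed via partial compositions, together with associativity, so each holds in $\rb_{2,1}$. Concretely, the identity $Y=R(Z)$ is an equality in $\rb_{2,1}(n-1)$, and we substitute it into the first slot of the binary product using $\circ_1$. Linearity of partial compositions lets us distribute over the sum $Z$. I expect this bookkeeping to be the main obstacle, and it is mild.

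For the morphism, define $f_n$ on generators of $\rb_{n,1}$: send the $n$-ary operation $\omega$ to $\omega_n$ and $P_1$ to $R$. To see that this is well defined, check that the defining relations of $\rb_{n,1}$ map to zero. The $n$-ary associativity relations hold because every $\omega_n\circ_i\omega_n$ equals the $(2n-1)$-fold iterated binary product, by binary associativity. The $n$-ary RB relation holds by the identity just proved. The universal property of the presented operad $\rb_{n,1}$ (the free operad modulo the ideal generated by these relations) then yields the operad morphism $f_n\colon \rb_{n,1}\to\rb_{2,1}$.
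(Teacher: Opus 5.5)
Your proof is correct and follows the paper's argument. You induct on $n$, rewrite $R(x_1)\cdots R(x_{n-1})$ as $R(Z)$ via the inductive hypothesis, and apply the binary RB relation. You do this once to the sum $Z$, whereas the paper applies it termwise and recombines by linearity. You then use the hypothesis again to recover the $i=n$ summand. Your explicit check that the associativity and RB relations of $\rb_{n,1}$ map to zero, so that $f_n$ exists by the universal property, fills in a step the paper leaves implicit.
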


\begin{proof}
The associative operation in the binary RB operad 
gives rise to an $n$-ary associative operation in the obvious way.
Assume that $R$ satisfies the binary RB relation.
We show that for $n \ge 2$, if $R$ satisfies the $n$-ary RB relation
then $R$ also satisfies the $(n+1)$-ary RB relation.
The left side of the $(n+1)$-ary RB relation is
\[
\prod_{i=1}^{n+1} R(x_i) 
=
\left( \prod_{i=1}^n R(x_i) \right) R(x_{n+1}).
\]
Applying the $n$-ary RB relation we obtain
\[
\sum_{i=1}^n R \left( R(x_1) \cdots \widehat{R}(x_i) \cdots R(x_n) \right) R(x_{n+1}).
\]
Each term is a product $R(X_i)R(x_{n+1})$ where
$X_i = R(x_1) \cdots \widehat{R}(x_i) \cdots R(x_n)$,
so we may apply the binary RB relation to the last expression to obtain
\begin{align*}
&
\sum_{i=1}^n 
R \Big( R(X_i) x_{n+1} \Big) 
+ 
\sum_{i=1}^n 
R \Big( X_i R(x_{n+1}) \Big)
\\
&=
R \Big( \sum_{i=1}^n R(X_i) x_{n+1} \Big) 
+ 
\sum_{i=1}^n 
R \Big( X_i R(x_{n+1}) \Big).
\end{align*}
By the $n$-ary RB relation we have
\[
\sum_{i=1}^n 
R( X_i ) 
=
\prod_{i=1}^n R(x_i).
\]
We now obtain
\[
R \Big( \prod_{i=1}^n R(x_i) x_{n+1} \Big)
+ 
\sum_{i=1}^n 
R \Big( X_i R(x_{n+1}) \Big)
\]
This is the right side of the $(n+1)$-ary RB relation.
\end{proof}

\begin{conjecture}
The operad morphism $f_n$ in Proposition \ref{proposition1} is injective.
Hence $\rb_{2,1}$ has a suboperad isomorphic to $\rb_{n,1}$ for all $n \ge 3$.
\end{conjecture}

In the $n$-ary RB operad define a new operation as the sum of the $n$ operations
from equation \eqref{noperations}; that is
$( x_1, \dots, x_n ) := \sum_{i=1}^n X_i$.

\begin{proposition}
\label{sumoperationassociative}
The operation $( x_1, \dots, x_n )$ is an associative $n$-ary operation.
\end{proposition}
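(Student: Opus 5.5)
The idea is to show that every partial composition of the new operation with itself equals one fixed symmetric expression, namely the sum over all $2n-1$ positions $t$ of the $(2n-1)$-fold product in which $R$ is applied to every argument except the $t$-th. Write $\mu(x_1,\dots,x_n) := (x_1,\dots,x_n) = \sum_{i=1}^n X_i$. Throughout I use that the underlying $n$-ary operation $\omega$ is associative, so any product of $1+k(n-1)$ factors is well defined without parentheses.

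The first step is the identity
\[
R\big( \mu(y_1,\dots,y_n) \big) = \sum_{k=1}^n R(Y_k) = \prod_{k=1}^n R(y_k),
\]
where $Y_k = R(y_1)\cdots \widehat{R}(y_k)\cdots R(y_n)$. This is exactly the weight-zero $n$-ary RB relation of Definition~\ref{definitionrotabaxter}, written in associative form as at the start of \S\ref{twopropositions}.

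Next, fix $j$ with $1 \le j \le n$ and compute $\mu \circ_j \mu$, using arguments $z_1,\dots,z_{2n-1}$. The outer arguments are $x_i = z_i$ for $i<j$ and $x_i = z_{i+n-1}$ for $i>j$. The inner block is $y = \mu(z_j,\dots,z_{j+n-1})$. Expanding the outer $\mu$ gives $\sum_{i=1}^n X_i$ with $x_j$ replaced by $y$, and I split this sum into two cases.
\begin{itemize}
\item For $i \ne j$, the factor in slot $j$ is $R(y)$. By the identity above this equals $\prod_{k} R(z_{j+k-1})$, so by associativity the term becomes the $(2n-1)$-fold product with $R$ on every argument except $z_i$ (or $z_{i+n-1}$ when $i>j$).
\item For $i = j$, the slot-$j$ factor is $y$ itself, with no $R$. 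Expanding $y = \sum_k Y_k$ and flattening by associativity gives the sum over $t = j,\dots,j+n-1$ of the $(2n-1)$-fold products with $R$ omitted exactly on $z_t$.
\end{itemize}

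Combining the two cases,
\[
\mu \circ_j \mu = \sum_{t=1}^{2n-1} R(z_1)\cdots \widehat{R}(z_t) \cdots R(z_{2n-1}),
\]
and the right side does not depend on $j$. By the definition of associativity for an $n$-ary operation, this proves that $\mu$ is associative.

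I do not expect a serious obstacle. The only point needing care is the bookkeeping that the $i \ne j$ terms cover the positions outside the block $\{j,\dots,j+n-1\}$, while the $i=j$ term covers the positions inside it, each exactly once. A second point is that the flattening step relies on associativity of $\omega$ in $\rb_{n,1}$, which holds by Definition~\ref{RBidealdefinition}.
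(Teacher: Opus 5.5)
Your proposal is correct and follows the paper's proof essentially step for step. You expand the outer operation, split the terms by whether the inner block's slot carries $R$, use the weight-zero RB relation $R(\Sigma_j)=\prod_k R(z_{j+k-1})$ on the terms where it does, and flatten by associativity of $\omega$. This gives $\sum_{t=1}^{2n-1} R(z_1)\cdots\widehat{R}(z_t)\cdots R(z_{2n-1})$, which is independent of $j$.
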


\begin{proof}
We verify that each of the $n$ association types in arity $2n-1$ 
for the sum operation reduces to the same RB monomial, 
which is the right side of the $(2n{-}1)$-ary RB relation.
Consider the $i$-th association type for the sum operation:
\[
( x_1, \dots, x_{i-1}, ( x_i, \dots, x_{i+n-1} ), x_{i+n}, \dots, x_{2n-1} ),
\qquad
1 \le i \le n.
\]
Applying the definition of the sum operation to the inner product gives
\[
( x_1, \dots, x_{i-1}, 
\sum_{j=1}^n R(x_i) \cdots \widehat{R}(x_{i-1+j}) \cdots R(x_{i-1+n}), 
x_{i+n}, \dots, x_{2n-1} ).
\]
For convenience, write
\[
\Sigma_i = \sum_{j=1}^n R(x_i) \cdots \widehat{R}(x_{i-1+j}) \cdots R(x_{i-1+n}).
\]
Now apply the definition of the sum operation again to obtain
\begin{align*}
&
\sum_{k=1}^{i-1} 
R(x_1) \cdots \widehat{R}(x_k) \cdots R(x_{i-1})
R( \Sigma_i ) 
R(x_{i+n}) \cdots R(x_{2n-1})
\\
&
+
R(x_1) \cdots R(x_{i-1})
\,\Sigma_i\, 
R(x_{i+n}) \cdots R(x_{2n-1})
\\
&
+
\sum_{k=i+n}^{2n-1}
R(x_1) \cdots R(x_{i-1})
R( \Sigma_i ) 
R(x_{i+n}) \cdots \widehat{R}(x_k) \cdots R(x_{2n-1}).
\end{align*}
By the $n$-ary RB relation we have
$R( \Sigma_i ) = R(x_i) \cdots R(x_{i-1+n})$.
We now obtain
\[
\sum_{k=1}^{2n-1}
R(x_1) \cdots \widehat{R}(x_k) \cdots R(x_{2n-1}),
\]
which does not depend on $i$.
\end{proof}


\section{The ternary analog of dendriform algebras}
\label{ternarydendriform}


\subsubsection*{Domain of the restriction of the rewriting morphism}

The operad $\na_{3,3}$ is generated by nonassociative ternary operations $\omega_i$ for $1 \le i \le 3$.
We consider the restriction of the rewriting morphism to the homogeneous subspace $\na_{3,3}(5)$, 
which has a basis of quadratic (weight 2) monomials 
$x_{ijk} = \omega_i \circ_j \omega_k$ for $1 \le i, j, k \le 3$.
This homogeneous component is the space of formal quadratic expressions in the
generating operations. 
We order these monomials by lex order on the triples $(j,k,i)$.
In terms of nonassociative monomials, we sort
first by association type (placement of parentheses), 
then by the inner operation, and finally by the outer operation:
\[
(({\ast}{\ast}{\ast})_k{\ast}{\ast})_i \;\; (j = 1),
\qquad
({\ast}({\ast}{\ast}{\ast})_k{\ast})_i \;\; (j = 2),
\qquad
({\ast}{\ast}({\ast}{\ast}{\ast})_k)_i \;\; (j = 3).
\]
See the first column of Table \ref{dendriformrewriting}
for the 27 basis monomials of $\na_{3,3}(5)$ in lex order.


\subsubsection*{Codomain of the restriction of the rewriting morphism}

The operad $\au_{3,1}$ is generated by an associative ternary operation and a unary operator $S$.
The codomain of the restriction is the homogeneous subspace $\au_{3,1}(5;4)$.
A basis of this subspace consists of the 378 operator monomials of arity 5 and operator multiplicity 4;
see Examples \ref{dimauexample1} and \ref{dimauexample2}.
The second column of Table \ref{dendriformrewriting}
lists the operator monomials which occur in the image of the restriction.

Figure \ref{operatormonomials} gives an algorithm which generates all $n$-ary 
one-operator monomials up to a given arity and multiplicity.
Each monomial is represented as a list (an ordered set enclosed in square brackets) 
and brackets are also used without ambiguity to indicate the application of the unary operator.
Observe that any monomial of operator multiplicity $m \ge 1$
can be obtained from a monomial of multiplicity $m-1$ by enclosing 
some valid submonomial in brackets, where valid means that the sublist 
has length congruent to 1 modulo $n-1$.

\begin{figure}[ht]
\begin{itemize}
\item
$n$ := the arity of the associative operation
\item
\texttt{maxp} := maximum arity of operator monomials to be generated
\item[]
(\texttt{maxp} must be congruent to 1 modulo $n-1$.)
\item
\texttt{maxm} := maximum multiplicity of operator monomials to be generated
\item
for $p$ from 1 to \texttt{maxp} in steps of $n{-}1$ do
($p$ is the arity of the monomials)
\begin{enumerate}
\item
(multiplicity 0: only monomial is list of $p$ argument symbols $\ast$)
\item[]
\texttt{monomials}[ $p$, 0 ] := [ [ $\ast$, \dots, $\ast$ ] ]
\item
for $m$ from $1$ to \texttt{maxm} do ($m$ is the multiplicity of the monomials)
\begin{enumerate}
\item
\texttt{monomials}$[p,m]$ := $[\;]$ \, (the empty list)
\item
(loop through all monomials with one less operator symbol)
\item[]
for $x$ in \texttt{monomials}$[ p, m{-}1 ]$ do
\begin{enumerate}
\item
$k$ := length($x$) (the number of items in the list $x$)
\item
(double loop through all submonomials of $x$)
\item[]
for $i$ from 1 to $k$ do and for $j$ from $i$ to $k$ do
\begin{itemize}
\item[$\ast$]
if $j{-}i{+}1 \equiv 1$ modulo $n{-}1$ then
\begin{itemize}
\item[$\bullet$]
(add operator brackets around the valid submonomial)
\item[]
$x'$ := [ $x_1$, \dots, $x_{i-1}$, [ $x_i$, \dots, $x_j$ ], $x_{j+1}$, \dots, $x_k$ ]
\item[$\bullet$]
append $x'$ to \texttt{monomials}$[p,m]$
\end{itemize}
\end{itemize}
\end{enumerate}
\item
eliminate any repetitions from \texttt{monomials}$[p,m]$
\end{enumerate}
\end{enumerate}
\end{itemize}
\vspace{-10pt}
\caption{Algorithm for generating $n$-ary one-operator monomials}
\label{operatormonomials}
\end{figure}


\subsubsection*{Consequences of the Rota-Baxter relation}

See Example \ref{exampledendriformconsequences}.
The consequences of the RB relation in $\au_{3,1}(5;4)$ form a spanning set 
for the homogeneous subspace $\mathcal{I}_{3,1}(5,4)$; 
see Definition \ref{RBidealdefinition}.
(Only 144 of the 378 basis monomials for $\au_{3,1}(5;4)$ appear in the consequences of the RB relation.)
We construct the matrix $C$ of consequences of the RB relation which has size $42 \times 378$;
each row of $C$ is the coefficient vector of one consequence of the RB relation with respect to
the ordered basis of one-operator monomials.


\subsubsection*{The rewriting morphism}

The rewriting morphism $r_{3,1} \colon \na_{3,3} \to \au_{3,1}$ is defined on the generating operations
by Example \ref{rewritingternarydendriform},
and extends uniquely since $\na_{3,3}$ is a free operad.
We consider the restriction $r'_{3,1}$ of the rewriting morphism to the quadratic subspace of its domain.
Clearly $r'_{3,1}( \na_{3,3}(5) ) \subseteq \au_{3,1}(5;4)$.
Table \ref{dendriformrewriting} displays the 27 basis monomials in $\na_{3,3}(5)$
in lex order together with their images $r'_{3,1}(x_{ijk})$
under the rewriting morphism and the Dyck words corresponding to the operator monomials.
(There are only 23 distinct operator monomials in Table \ref{dendriformrewriting};
the rewriting morphism is not injective.)
We construct the rewriting matrix $X$ which has size $27 \times 378$.
Each row of $X$ corresponds to the rewriting of one basis monomial in $\na_{3,3}(5)$; 
thus each row of $X$ is zero except for a single 1.

The composition $s_{3,1} \circ r'_{3,1}$
(Definition \ref{compositiondefinition}) 
is a linear map from
$\na_{3,3}(5)$ to $\rb_{3,1}(5,4)$
(Definition \ref{RBidealdefinition}).
The kernel of this composition is the space of quadratic relations satisfied by
(the rewritings of) the generating operations of $\na$.
A vector $v$ in the domain of this composition is a relation
satisfied by the generating operations precisely when $vX$ belongs to the row space of $C$.

\begin{table}[ht]
$
\begin{array}{lll}
x_{ijk} \in \na_{3,3}(5)  &\qquad  r_{3,1}( x_{ijk} ) \in \au_{3,1}(5;4)  &\qquad  \text{Dyck word} \\
\midrule
\omega_1 \circ_1 \omega_1  &\qquad  {\ast}S({\ast})S({\ast})S({\ast})S({\ast})  &\qquad  ()(())(())(())(())  \\ 
\omega_2 \circ_1 \omega_1  &\qquad  S({\ast}S({\ast})S({\ast})){\ast}S({\ast})  &\qquad  (()(())(()))()(())  \\ 
\omega_3 \circ_1 \omega_1  &\qquad  S({\ast}S({\ast})S({\ast}))S({\ast}){\ast}  &\qquad  (()(())(()))(())()  \\ 
\omega_1 \circ_1 \omega_2  &\qquad  S({\ast}){\ast}S({\ast})S({\ast})S({\ast})  &\qquad  (())()(())(())(())  \\ 
\omega_2 \circ_1 \omega_2  &\qquad  S(S({\ast}){\ast}S({\ast})){\ast}S({\ast})  &\qquad  ((())()(()))()(())  \\ 
\omega_3 \circ_1 \omega_2  &\qquad  S(S({\ast}){\ast}S({\ast}))S({\ast}){\ast}  &\qquad  ((())()(()))(())()  \\ 
\omega_1 \circ_1 \omega_3  &\qquad  S({\ast})S({\ast}){\ast}S({\ast})S({\ast})  &\qquad  (())(())()(())(())  \\ 
\omega_2 \circ_1 \omega_3  &\qquad  S(S({\ast})S({\ast}){\ast}){\ast}S({\ast})  &\qquad  ((())(())())()(())  \\ 
\omega_3 \circ_1 \omega_3  &\qquad  S(S({\ast})S({\ast}){\ast})S({\ast}){\ast}  &\qquad  ((())(())())(())()  \\ 
\omega_1 \circ_2 \omega_1  &\qquad  {\ast}S({\ast}S({\ast})S({\ast}))S({\ast})  &\qquad  ()(()(())(()))(())  \\ 
\omega_2 \circ_2 \omega_1  &\qquad  S({\ast}){\ast}S({\ast})S({\ast})S({\ast})  &\qquad  (())()(())(())(())  \\ 
\omega_3 \circ_2 \omega_1  &\qquad  S({\ast})S({\ast}S({\ast})S({\ast})){\ast}  &\qquad  (())(()(())(()))()  \\ 
\omega_1 \circ_2 \omega_2  &\qquad  {\ast}S(S({\ast}){\ast}S({\ast}))S({\ast})  &\qquad  ()((())()(()))(())  \\ 
\omega_2 \circ_2 \omega_2  &\qquad  S({\ast})S({\ast}){\ast}S({\ast})S({\ast})  &\qquad  (())(())()(())(())  \\ 
\omega_3 \circ_2 \omega_2  &\qquad  S({\ast})S(S({\ast}){\ast}S({\ast})){\ast}  &\qquad  (())((())()(()))()  \\ 
\omega_1 \circ_2 \omega_3  &\qquad  {\ast}S(S({\ast})S({\ast}){\ast})S({\ast})  &\qquad  ()((())(())())(())  \\ 
\omega_2 \circ_2 \omega_3  &\qquad  S({\ast})S({\ast})S({\ast}){\ast}S({\ast})  &\qquad  (())(())(())()(())  \\ 
\omega_3 \circ_2 \omega_3  &\qquad  S({\ast})S(S({\ast})S({\ast}){\ast}){\ast}  &\qquad  (())((())(())())()  \\ 
\omega_1 \circ_3 \omega_1  &\qquad  {\ast}S({\ast})S({\ast}S({\ast})S({\ast}))  &\qquad  ()(())(()(())(()))  \\ 
\omega_2 \circ_3 \omega_1  &\qquad  S({\ast}){\ast}S({\ast}S({\ast})S({\ast}))  &\qquad  (())()(()(())(()))  \\ 
\omega_3 \circ_3 \omega_1  &\qquad  S({\ast})S({\ast}){\ast}S({\ast})S({\ast})  &\qquad  (())(())()(())(())  \\ 
\omega_1 \circ_3 \omega_2  &\qquad  {\ast}S({\ast})S(S({\ast}){\ast}S({\ast}))  &\qquad  ()(())((())()(()))  \\ 
\omega_2 \circ_3 \omega_2  &\qquad  S({\ast}){\ast}S(S({\ast}){\ast}S({\ast}))  &\qquad  (())()((())()(()))  \\ 
\omega_3 \circ_3 \omega_2  &\qquad  S({\ast})S({\ast})S({\ast}){\ast}S({\ast})  &\qquad  (())(())(())()(())  \\ 
\omega_1 \circ_3 \omega_3  &\qquad  {\ast}S({\ast})S(S({\ast})S({\ast}){\ast})  &\qquad  ()(())((())(())())  \\ 
\omega_2 \circ_3 \omega_3  &\qquad  S({\ast}){\ast}S(S({\ast})S({\ast}){\ast})  &\qquad  (())()((())(())())  \\ 
\omega_3 \circ_3 \omega_3  &\qquad  S({\ast})S({\ast})S({\ast})S({\ast}){\ast}  &\qquad  (())(())(())(())()  
\end{array}
$
\bigskip
\caption{The rewriting morphism for the ternary dendriform operad}
\label{dendriformrewriting}
\end{table}

\begin{theorem}
\label{ternarydendriformrelations}
Every quadratic relation satisfied by the defining operations for the ternary analog
of dendriform algebras is a linear combination of the following 10 linearly independent relations:
\begin{alignat*}{2}
(({\ast}{\ast}{\ast})_2{\ast}{\ast})_1 &= ({\ast}({\ast}{\ast}{\ast})_1{\ast})_2, 
&\qquad
(({\ast}{\ast}{\ast})_3{\ast}{\ast})_1 &= ({\ast}{\ast}({\ast}{\ast}{\ast})_1)_3, 
\\
({\ast}({\ast}{\ast}{\ast})_3{\ast})_2 &= ({\ast}{\ast}({\ast}{\ast}{\ast})_2)_3, 
&\qquad
({\ast}({\ast}{\ast}{\ast})_2{\ast})_2 &= ({\ast}{\ast}({\ast}{\ast}{\ast})_1)_3, 
\\
(({\ast}{\ast}{\ast})_1{\ast}{\ast})_1 &= ({\ast}({\ast}{\ast}{\ast})_\bullet{\ast})_1, 
&\qquad
(({\ast}{\ast}{\ast})_1{\ast}{\ast})_1 &= ({\ast}{\ast}({\ast}{\ast}{\ast})_\bullet)_1, 
\\
({\ast}{\ast}({\ast}{\ast}{\ast})_3)_3 &= ({\ast}({\ast}{\ast}{\ast})_\bullet{\ast})_3,
&\qquad
({\ast}{\ast}({\ast}{\ast}{\ast})_3)_3 &= (({\ast}{\ast}{\ast})_\bullet{\ast}{\ast})_3,
\\
(({\ast}{\ast}{\ast})_2{\ast}{\ast})_1 &= ({\ast}{\ast}({\ast}{\ast}{\ast})_\bullet)_2, 
&\qquad
({\ast}{\ast}({\ast}{\ast}{\ast})_2)_3 &= (({\ast}{\ast}{\ast})_\bullet{\ast}{\ast})_2. 
\end{alignat*}
We have introduced the auxiliary sum operation
$({\ast},{\ast},{\ast})_\bullet := \sum_{i=1}^3 ({\ast}{\ast}{\ast})_i$
which is associative by Proposition \ref{sumoperationassociative}.
\end{theorem}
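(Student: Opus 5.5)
The claim is that the kernel of $f'_{3,1}=s_{3,1}\circ r'_{3,1}$ on $\na_{3,3}(5)$ is exactly the span of the ten displayed relations. I will prove this in two halves: the ten relations lie in the kernel, and the kernel has dimension $10$. Everything is linear algebra over $\mathbb{Q}$ with the matrices $X$ ($27\times 378$) and $C$ ($42\times 378$) described above. A vector $v\in\mathbb{Q}^{27}$ lies in the kernel iff $vX\in\operatorname{rowspace}(C)$. So the kernel is obtained by stacking $\begin{pmatrix} X\\ C\end{pmatrix}$ and computing the left nullspace, then projecting onto the first $27$ coordinates. Equivalently,
\[
\dim\ker f'_{3,1} = 27 - \big(\operatorname{rank}\begin{pmatrix}X\\ C\end{pmatrix} - \operatorname{rank} C\big).
\]

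For containment I will verify each relation by hand using Table \ref{dendriformrewriting}. Some are trivial because the two sides have identical rewritings, for instance $\omega_1\circ_1\omega_2$ and $\omega_2\circ_2\omega_1$ both give $S(\ast)\ast S(\ast)S(\ast)S(\ast)$. These account for the first four ``pure'' relations. For example, $\omega_1\circ_1\omega_3$, $\omega_2\circ_2\omega_2$ and $\omega_3\circ_3\omega_1$ all coincide, which handles $(({\ast}{\ast}{\ast})_3{\ast}{\ast})_1=({\ast}{\ast}({\ast}{\ast}{\ast})_1)_3$ and $({\ast}({\ast}{\ast}{\ast})_2{\ast})_2=({\ast}{\ast}({\ast}{\ast}{\ast})_1)_3$. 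The six relations involving $(\ast\ast\ast)_\bullet$ require one application of the ternary RB relation. For instance, $(\ast(\ast\ast\ast)_\bullet\ast)_1$ rewrites to $\ast\, S\big(\sum_i X_i\big)\, S(\ast)$, and $S(\sum_i X_i)=S(\ast)S(\ast)S(\ast)$ by the RB relation, giving $\ast S(\ast)S(\ast)S(\ast)S(\ast)$, which is the rewriting of $\omega_1\circ_1\omega_1$. This is exactly the mechanism in the proof of Proposition \ref{sumoperationassociative}, and each such relation is $v X = $ a single consequence $A\circ_j R$-type row of $C$. Linear independence of the ten is immediate by inspecting leading monomials in the lex order on $(j,k,i)$: after a suitable choice of pivot, each relation contains a basis monomial $x_{ijk}$ absent from the others.

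For the spanning (dimension) half, I need $\operatorname{rank}\begin{pmatrix}X\\C\end{pmatrix}-\operatorname{rank}C=17$. I will compute this exactly by row-reducing over $\mathbb{Q}$. The matrices are small: only $144+23$ columns are relevant, since columns that vanish in both blocks can be discarded. To make the count transparent, I can reduce it to the following claim. The $23$ distinct images in Table \ref{dendriformrewriting}, taken modulo $\mathcal I_{3,1}(5,4)$, span a space of dimension $17$. Since four coincidences among the $27$ rows already account for $27-23=4$ kernel vectors, the remaining $6$ kernel dimensions must come from independent linear dependencies modulo the RB consequences. The $6$ bullet relations exhibit six such dependencies, so it suffices to show there are no more. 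That is, after reducing $C$ to row echelon form, the $23$ distinct image monomials together with the $6$ known dependencies leave a quotient of dimension exactly $17$.

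The main obstacle is this upper bound on the kernel: proving that no further quadratic relations exist. This cannot be read off by hand; it genuinely requires the rank computation of $C$ and of the stacked matrix. I would perform it in exact rational arithmetic, which is feasible at this size, and record the reduced row echelon form of the kernel as a certificate. I would then check that this certificate's rows are combinations of the ten stated relations.
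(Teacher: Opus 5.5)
Your proposal is correct and follows essentially the paper's route: in both, the decisive step is the exact rational rank computation showing $\operatorname{rank}\begin{pmatrix}X\\ C\end{pmatrix}-\operatorname{rank}C=17$, which the paper phrases as the row canonical form of $\begin{bmatrix} C & O\\ X & I\end{bmatrix}$ having exactly 10 rows with leading 1s in the right block. The only difference is how the ten displayed relations are certified: you check membership and independence by hand (four monomial coincidences from Table \ref{dendriformrewriting}, six single consequences of the form $(A\circ_j R)\circ_k S$, and a monomial $x_{ijk}$ in each relation that occurs in no other), whereas the paper reads a kernel basis off the row canonical form and reaches the displayed relations by LLL reduction, which preserves the span.
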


\begin{proof}
These computations were done using rational arithmetic exclusively.
We construct the block matrix 
\[
B = \begin{bmatrix} C & O_{42,27} \\ X & I_{27} \end{bmatrix},
\]
where $O$ and $I$ are the zero and identity matrices of the indicated sizes.
We compute the RCF of this matrix (also called the reduced row-echelon form);
the matrix has full rank.
This is the explicit matrix form of computing the composition of the rewriting morphism
with the natural surjection onto the Rota-Baxter operad.
We identify the rows whose leading 1s appear in the right part of the matrix (columns $j > 378$);
these rows represent a basis for the kernel of the composition.
There are 10 such rows (the bottom 10 rows of the RCF).
We extract the $10 \times 27$ submatrix containing the right parts of these rows;
see Figure \ref{dendriformmatrix} (with dot for zero).
The rows are the coefficient vectors of the quadratic relations satisfied by
the operations defining the ternary analog of dendriform algebras;
the nonzero coefficients are $\pm 1$.
The squared Euclidean norms of these vectors (sorted by length) are
$2^3$, $4^6$, 6 (the exponents indicate multiplicities).
Applying the LLL algorithm for lattice basis reduction to these rows,
we obtain a shorter basis whose squared norms are $2^4$, $4^6$
(we have eliminated the relation with 6 terms).
These reduced relations are stated in the theorem.
(For the application of the LLL algorithm to polynomial identities see Bremner and Peresi \cite{BP}.)
\end{proof}

\begin{figure}[ht]
\small
$\left[
\begin{array}{rrrrrrrrrrrrrrrrrrrrrrrrrrr}
1 &\!\!\! . &\!\!\! . &\!\!\! . &\!\!\! . &\!\!\! . &\!\!\! . &\!\!\! . &\!\!\! . &\!\!\! . &\!\!\! . &\!\!\! . &\!\!\! . &\!\!\! . &\!\!\! . &\!\!\! . &\!\!\! . &\!\!\! . &\!\!\! -1 &\!\!\! . &\!\!\! . &\!\!\! -1 &\!\!\! . &\!\!\! . &\!\!\! -1 &\!\!\! . &\!\!\! . \\
. &\!\!\! 1 &\!\!\! . &\!\!\! . &\!\!\! 1 &\!\!\! . &\!\!\! . &\!\!\! 1 &\!\!\! . &\!\!\! . &\!\!\! . &\!\!\! . &\!\!\! . &\!\!\! . &\!\!\! . &\!\!\! . &\!\!\! . &\!\!\! . &\!\!\! . &\!\!\! . &\!\!\! . &\!\!\! . &\!\!\! . &\!\!\! -1 &\!\!\! . &\!\!\! . &\!\!\! . \\
. &\!\!\! . &\!\!\! 1 &\!\!\! . &\!\!\! . &\!\!\! 1 &\!\!\! . &\!\!\! . &\!\!\! 1 &\!\!\! . &\!\!\! . &\!\!\! . &\!\!\! . &\!\!\! . &\!\!\! . &\!\!\! . &\!\!\! . &\!\!\! . &\!\!\! . &\!\!\! . &\!\!\! . &\!\!\! . &\!\!\! . &\!\!\! . &\!\!\! . &\!\!\! . &\!\!\! -1 \\
. &\!\!\! . &\!\!\! . &\!\!\! 1 &\!\!\! . &\!\!\! . &\!\!\! . &\!\!\! . &\!\!\! . &\!\!\! . &\!\!\! . &\!\!\! . &\!\!\! . &\!\!\! . &\!\!\! . &\!\!\! . &\!\!\! . &\!\!\! . &\!\!\! . &\!\!\! -1 &\!\!\! . &\!\!\! . &\!\!\! -1 &\!\!\! . &\!\!\! . &\!\!\! -1 &\!\!\! . \\
. &\!\!\! . &\!\!\! . &\!\!\! . &\!\!\! . &\!\!\! . &\!\!\! 1 &\!\!\! . &\!\!\! . &\!\!\! . &\!\!\! . &\!\!\! . &\!\!\! . &\!\!\! . &\!\!\! . &\!\!\! . &\!\!\! . &\!\!\! . &\!\!\! . &\!\!\! . &\!\!\! -1 &\!\!\! . &\!\!\! . &\!\!\! . &\!\!\! . &\!\!\! . &\!\!\! . \\
. &\!\!\! . &\!\!\! . &\!\!\! . &\!\!\! . &\!\!\! . &\!\!\! . &\!\!\! . &\!\!\! . &\!\!\! 1 &\!\!\! . &\!\!\! . &\!\!\! 1 &\!\!\! . &\!\!\! . &\!\!\! 1 &\!\!\! . &\!\!\! . &\!\!\! -1 &\!\!\! . &\!\!\! . &\!\!\! -1 &\!\!\! . &\!\!\! . &\!\!\! -1 &\!\!\! . &\!\!\! . \\
. &\!\!\! . &\!\!\! . &\!\!\! . &\!\!\! . &\!\!\! . &\!\!\! . &\!\!\! . &\!\!\! . &\!\!\! . &\!\!\! 1 &\!\!\! . &\!\!\! . &\!\!\! . &\!\!\! . &\!\!\! . &\!\!\! . &\!\!\! . &\!\!\! . &\!\!\! -1 &\!\!\! . &\!\!\! . &\!\!\! -1 &\!\!\! . &\!\!\! . &\!\!\! -1 &\!\!\! . \\
. &\!\!\! . &\!\!\! . &\!\!\! . &\!\!\! . &\!\!\! . &\!\!\! . &\!\!\! . &\!\!\! . &\!\!\! . &\!\!\! . &\!\!\! 1 &\!\!\! . &\!\!\! . &\!\!\! 1 &\!\!\! . &\!\!\! . &\!\!\! 1 &\!\!\! . &\!\!\! . &\!\!\! . &\!\!\! . &\!\!\! . &\!\!\! . &\!\!\! . &\!\!\! . &\!\!\! -1 \\
. &\!\!\! . &\!\!\! . &\!\!\! . &\!\!\! . &\!\!\! . &\!\!\! . &\!\!\! . &\!\!\! . &\!\!\! . &\!\!\! . &\!\!\! . &\!\!\! . &\!\!\! 1 &\!\!\! . &\!\!\! . &\!\!\! . &\!\!\! . &\!\!\! . &\!\!\! . &\!\!\! -1 &\!\!\! . &\!\!\! . &\!\!\! . &\!\!\! . &\!\!\! . &\!\!\! . \\
. &\!\!\! . &\!\!\! . &\!\!\! . &\!\!\! . &\!\!\! . &\!\!\! . &\!\!\! . &\!\!\! . &\!\!\! . &\!\!\! . &\!\!\! . &\!\!\! . &\!\!\! . &\!\!\! . &\!\!\! . &\!\!\! 1 &\!\!\! . &\!\!\! . &\!\!\! . &\!\!\! . &\!\!\! . &\!\!\! . &\!\!\! -1 &\!\!\! . &\!\!\! . &\!\!\! .
\end{array}
\right]$
\caption{The coefficient vectors of the ternary dendriform relations}
\label{dendriformmatrix}
\end{figure}

\begin{definition}
The nonsymmetric operad $\tad$ defining the ternary analog of dendriform algebras
is generated by three nonassociative ternary operations $({\ast}{\ast}{\ast})_i$ ($1 \le i \le 3$)
satisfying the relations of 
Theorem \ref{ternarydendriformrelations}.
\end{definition}

We remark that $\tad$ may be regarded as the quotient of $\na_{3,3}$ by the ideal generated
by the 10 relations of Theorem \ref{ternarydendriformrelations}.
The composition $f_{3,1} = s_{3,1} \circ r_{3,1}$ (Definition \ref{compositiondefinition}) 
defines a surjective operad morphism $g_{3,1}$
from $\tad$ to the suboperad of $\rb_{3,1}$ generated by the rewritings of the generating 
operations (Example \ref{rewritingternarydendriform}).
Theorem \ref{ternarydendriformrelations}
identifies the full quadratic kernel of $f_{3,1}$. 
Hence the induced morphism $g_{3,1}$ has trivial quadratic kernel.

\begin{openproblem}
Is the morphism $g_{3,1}$ injective?
That is, are there any higher (cubic or above) relations satisfied by the rewritings
of the generating operations that are not consequences of the quadratic relations?
\end{openproblem}

\begin{openproblem}
Find a Gr\"obner basis for the operad $\tad$.
Does this operad have a quadratic Gr\"obner basis?
\end{openproblem}

\begin{openproblem}
Is this operad Koszul?
(This would follow from the existence of a quadratic Gr\"obner basis;
see Hoffbeck \cite{Hoffbeck}.)
\end{openproblem}


\subsubsection*{Dimension calculations for the operad $\tad$}

We first recall the dimension formula for the dendriform operad $\mathcal{D}$:
\[
\dim \mathcal{D}(n) = \frac1n \binom{2n-2}{n-1}.
\]
These are the shifted Catalan numbers; see \cite[\S 3]{LR}.

For the ternary analog $\tad$ we clearly have
\[
\dim \tad(1) = 1, \qquad
\dim \tad(3) = 3, \qquad
\dim \tad(5) = 17 \, ( = 27 - 10 ).
\]
The last dimension is $\dim \na_{3,3}(5) - \dim \ker f_{3,1}$.
We next describe computations which demonstrate the following results:
\[
\dim \tad(7) = 119, \qquad
\dim \tad(9) = 929.
\]
These computations were done using rational arithmetic exclusively.

In arity 7 (weight 3) there are 12 cubic association types in lex order using 
the compositions of 7 into 3 odd parts (511, 331, 313, 151, 133, 115):
\begin{alignat*}{4}
&
( ( ( {\ast} {\ast} {\ast} ) {\ast} {\ast} ) {\ast} {\ast} )
&\qquad
&
( ( {\ast} ( {\ast} {\ast} {\ast} ) {\ast} ) {\ast} {\ast} )
&\qquad
&
( ( {\ast} {\ast} ( {\ast} {\ast} {\ast} ) ) {\ast} {\ast} )
&\qquad
&
( ( {\ast} {\ast} {\ast} ) ( {\ast} {\ast} {\ast} ) {\ast} )
\\
&
( ( {\ast} {\ast} {\ast} ) {\ast} ( {\ast} {\ast} {\ast} ) )
&\qquad
&
( {\ast} ( ( {\ast} {\ast} {\ast} ) {\ast} {\ast} ) {\ast} )
&\qquad
&
( {\ast} ( {\ast} ( {\ast} {\ast} {\ast} ) {\ast} ) {\ast} )
&\qquad
&
( {\ast} ( {\ast} {\ast} ( {\ast} {\ast} {\ast} ) ) {\ast} )
\\
&
( {\ast} ( {\ast} {\ast} {\ast} ) ( {\ast} {\ast} {\ast} ) )
&\qquad
&
( {\ast} {\ast} ( ( {\ast} {\ast} {\ast} ) {\ast} {\ast} ) )
&\qquad
&
( {\ast} {\ast} ( {\ast} ( {\ast} {\ast} {\ast} ) {\ast} ) )
&\qquad
&
( {\ast} {\ast} ( {\ast} {\ast} ( {\ast} {\ast} {\ast} ) ) )
\end{alignat*}
Altogether we have $12 \cdot 3^3 = 324$ cubic nonsymmetric monomials in arity 7. 
Let $\rho$ denote one of the 10 quadratic relations in Theorem \ref{ternarydendriformrelations},
and let $\omega_i$ ($1 \le i \le 3$) denote the three generating operations of $\tad$.
There are 15 partial compositions of the form $\rho \, \circ_k \, \omega_i$ 
where $1 \le k \le 5$, 
and 9 partial compositions of the form $\omega_i \, \circ_k \, \rho$ where $1 \le k \le 3$.
Altogether, there are 24 cubic consequences (arity 7) of each relation $\rho$, 
and hence 240 cubic consequences of all 10 relations;
we will denote these cubic consequences by $\sigma_i$, ordered arbitrarily.
We construct a matrix of size $240 \times 324$. 
The $(i,j)$ entry is the coefficient of the $j$-th cubic monomial in $\sigma_i$.
The rank of this matrix is 205 and hence $\dim \tad(7) = 324 - 205$.

In arity 9 (weight 4) there are 55 quartic association types which can be generated
recursively in lex order using the 10 compositions of 9 into 3 odd parts. 
Hence we have $55 \cdot 3^4 = 4455$ quartic nonsymmetric monomials.
There are $240 \cdot 7 \cdot 3 = 5040$ partial compositions
$\sigma_i \, \circ_k \, \omega_j$, 
and $3 \cdot 3 \cdot 240 = 2160$ partial compositions $\omega_j \, \circ_k \, \sigma_i$.
Altogether 7200 consequences in arity 9,
but only 4950 of these are distinct. 
We denote these consequences by $\tau_i$, ordered arbitrarily.
We construct a matrix of size $4950 \times 4455$. 
The $(i,j)$ entry is the coefficient of the $j$-th quartic monomial in $\tau_i$.
The rank of this matrix is 3526 and hence $\dim \tad(9) = 4455 - 3526 = 929$.

\begin{conjecture}
We have
\[
\dim \tad(2n-1) = \frac1n \sum_{i=1}^{n-1} \binom{2n-2}{i} \binom{3n-2-i}{2n-1}.
\]
This is OEIS sequence A344553, which begins 
1, 3, 17, 119, 929, 7755, 67745, 611567, \dots.
According to the OEIS, this sequence enumerates the lattice paths from $(0,0)$ to $(2n-1,n)$ 
using steps $E=(1,0)$, $N=(0,1)$, and $D=(1,1)$ which stay weakly above the line through 
$(0,0)$ and $(2n-1,n)$.
See von Bell and Yip \cite{vBY}.
\end{conjecture}


\section{The ternary analog of quadri-algebras}
\label{ternaryquadrialgebras}


\subsubsection*{The restriction of the rewriting morphism}

The operad $\na_{3,9}$ is generated by nine nonassociative ternary operations 
which we write as $\omega_{ij} \in \na_{3,9}(3)$ for $1 \le i, j \le 3$.
We display these operations together with their images under the restriction of the rewriting morphism
$r_{3,2} \colon \na_{3,9}(3) \to \au_{3,2}(3;2,2)$
in the following $3 \times 3$ array 
(this was previously given as Example \ref{rewritingquadrialgebras}).
For ease of reading we use parentheses for $S$ and square brackets for $T$:
\[
\begin{array}{lll}
\omega_{11} \,\mapsto\, {\ast} T[S({\ast})] T[S({\ast})], \quad 
\omega_{12} \,\mapsto\, T[{\ast}] S({\ast}) T[S({\ast})], \quad 
\omega_{13} \,\mapsto\, T[{\ast}] T[S({\ast})] S({\ast}), 
\\[3pt]
\omega_{21} \,\mapsto\, S({\ast}) T[{\ast}] T[S({\ast})], \quad 
\omega_{22} \,\mapsto\, T[S({\ast})] {\ast} T[S({\ast})], \quad 
\omega_{23} \,\mapsto\, T[S({\ast})] T[{\ast}] S({\ast}), 
\\[3pt]
\omega_{31} \,\mapsto\, S({\ast}) T[S({\ast})] T[{\ast}], \quad 
\omega_{32} \,\mapsto\, T[S({\ast})] S({\ast}) T[{\ast}], \quad
\omega_{33} \,\mapsto\, T[S({\ast})] T[S({\ast})] {\ast}.
\end{array}
\]
Since $\na_{n,k}$ is a free operad, this array suffices to define the rewriting morphism
$r_{3,2} \colon \na_{3,9} \to \au_{3,2}$.
In $\omega_{ij}$ the subscript $ij$ means that
$S$ occurs on every argument except $i$, and
$T$ occurs on every argument except $j$.
We will usually write $( {\ast} {\ast} {\ast} )_{ij}$ instead of
$\omega_{ij}( {\ast} {\ast} {\ast} )$.
Transposing the preceding array corresponds to interchanging $S$ and $T$
(and using the commutativity assumption $ST = TS$).

For later convenience, we define seven auxiliary operations:
the row sums, the column sums, and the total sum:
\[
\omega_{i \bullet} := \sum_{j=1}^3 \omega_{ij} \; (i = 1,2,3),
\quad
\omega_{\bullet j} := \sum_{i=1}^3 \omega_{ij} \; (j = 1,2,3),
\quad
\omega_{\bullet\bullet} := \sum_{i=1}^3 \sum_{j=1}^3 \omega_{ij}.
\]
We will show later that the total sum operation is associative (Proposition \ref{totalassociative}) 
and that the three row (column) sums satisfy the defining relations for the ternary analog of dendriform algebras
(Proposition \ref{rowcoldendriform}).

As basis of the quadratic subspace $\na_{3,9}(5)$ we use the 243 monomials 
$\omega_{ij} \circ_k \omega_{\ell m}$
for $1 \le i, j, k, \ell, m \le 3$;
we apply lex order on the quintuples $(k,\ell,m,i,j)$.
The homogeneous subspace $\na_{3,9}(5)$ consists of all formal quadratic expressions 
in the generating operations $\omega_{ij}$.
The kernel of the composition of the restriction of the rewriting morphism with the surjection 
onto the Rota-Baxter ideal (Definition \ref{compositiondefinition})
is the space of quadratic relations satisfied by the rewritings of the nine operations.
We present some examples of rewriting of quadratic monomials:
\begin{align*}
( ( {\ast} {\ast} {\ast} )_{13} {\ast} {\ast} )_{23} &\;\longmapsto\; 
T[S(T[{\ast}]T[S({\ast})]S({\ast}))]T[{\ast}]S({\ast}),  
\\
( ( {\ast} {\ast} {\ast} )_{23} {\ast} {\ast} )_{13} &\;\longmapsto\; 
T[T[S({\ast})]T[{\ast}]S({\ast})]T[S({\ast})]S({\ast}), 
\\
( ( {\ast} {\ast} {\ast} )_{32} {\ast} {\ast} )_{33} &\;\longmapsto\; 
T[S(T[S({\ast})]S({\ast})T[{\ast}])]T[S({\ast})]{\ast},  
\\
( {\ast} ( {\ast} {\ast} {\ast} )_{12} {\ast} )_{23} &\;\longmapsto\; 
T[S({\ast})]T[T[{\ast}]S({\ast})T[S({\ast})]]S({\ast}),  
\\
( {\ast} ( {\ast} {\ast} {\ast} )_{22} {\ast} )_{13} &\;\longmapsto\; 
T[{\ast}]T[S(T[S({\ast})]{\ast}T[S({\ast})])]S({\ast}),  
\\
( {\ast} ( {\ast} {\ast} {\ast} )_{31} {\ast} )_{33} &\;\longmapsto\; 
T[S({\ast})]T[S(S({\ast})T[S({\ast})]T[{\ast}])]{\ast},  
\\
( {\ast} {\ast} ( {\ast} {\ast} {\ast} )_{11} )_{23} &\;\longmapsto\; 
T[S({\ast})]T[{\ast}]S({\ast}T[S({\ast})]T[S({\ast})]),  
\\
( {\ast} {\ast} ( {\ast} {\ast} {\ast} )_{21} )_{13} &\;\longmapsto\; 
T[{\ast}]T[S({\ast})]S(S({\ast})T[{\ast}]T[S({\ast})]),  
\\
( {\ast} {\ast} ( {\ast} {\ast} {\ast} )_{23} )_{33} &\;\longmapsto\; 
T[S({\ast})]T[S({\ast})]T[S({\ast})]T[{\ast}]S({\ast}),  
\\
( {\ast} {\ast} ( {\ast} {\ast} {\ast} )_{33} )_{23} &\;\longmapsto\; 
T[S({\ast})]T[{\ast}]S(T[S({\ast})]T[S({\ast})]{\ast}).
\end{align*}
The rewriting morphism is not injective: the 243 quadratic basis monomials produce
only 235 distinct rewritings.


\subsubsection*{Codomain of the restriction of the rewriting morphism}

The operad $\au_{3,2}$ is generated by an associative ternary operation $A$ and 
two commuting unary operators $S$ and $T$.
The homogeneous subspace $\au_{3,2}(5;4,4)$ is the codomain of the restriction 
of the rewriting morphism to the quadratic subspace $\na_{3,9}(5)$.
For the domain $\na_{3,9}(5)$ the rewritings of the basis monomials have arity 5 
and operator multiplicity 4 for both $S$ and $T$.
A basis of $\au_{3,2}(5;4,4)$ consists of the 100548 operator monomials of arity 5 and 
operator multiplicities $\mathbf{m} = (4,4)$;
see Examples \ref{dimauexample1} and \ref{dimauexample2}.
We do not compute all these monomials;
instead, we compute the consequences of the RB relations for $S$ and $T$ and use the much smaller set
(with 53520 elements) of two-operator monomials which occur in the terms of these consequences.
We also include the monomials which occur in the image of the rewriting morphism.
We order the union of these two sets arbitrarily.


\subsubsection*{Consequences of the Rota-Baxter relation}

The consequences in $\au_{3,2}(5;4,4)$ of the RB relations for $S$ and $T$ form a spanning set 
for the homogeneous subspace $\mathcal{I}_{3,2}(5;4,4)$; 
see Definition \ref{RBidealdefinition}.
For computational convenience we represent the two-operator monomials using strings.
If $x$ is a two-operator monomial of arity $n$, regarded as a string over the ordered alphabet
$\{ \ast, S, T, (, ), [, ] \}$ (where $\ast$ is the generic argument symbol), 
the possible partial compositions with $A, S, T$ are defined as follows: 
\begin{alignat*}{2}
&
A \circ_i x \; (i = 1,2,3)
&\qquad
&
\text{replace $x$ by 
$x{\ast}{\ast} \, (i = 1)$,
${\ast}x{\ast} \, (i = 2)$,
${\ast}{\ast}x \, (i = 3)$}, 
\\
&
x \circ_i A \; (i=1,\dots,n)
&\qquad
&
\text{replace the $i$-th occurrence of $\ast$ in $x$ by ${\ast}{\ast}{\ast}$}
\\
&
S \circ_1 x
&\qquad
&
\text{replace $x$ by $S(x)$}
\\
&
x \circ_i S \; (i=1,\dots,n)
&\qquad
&
\text{replace the $i$-th occurrence of $\ast$ in $x$ by $S({\ast})$}
\\
&
T \circ_1 x
&\qquad
&
\text{replace $x$ by $T[x]$}
\\
&
x \circ_i T \; (i=1,\dots,n)
&\qquad
&
\text{replace the $i$-th occurrence of $\ast$ in $x$ by $T[{\ast}]$}
\end{alignat*}
In these strings, the symbols $S$ and $T$ are redundant; we may replace $S($ by $($
and $T[$ by $[$.
The algorithm in Figure \ref{generatingconsequences} 
generates the consequences of the RB relations.

The algorithm for computing consequences of the RB relations is highly redundant
(owing to the parallel and sequential properties of partial compositions
and the associativity of the ternary operation).
For each operator $S$ and $T$ we obtain 638400 consequences, 
summing over all permutations of the corresponding multiset (see Example \ref{multisetexample}), 
but in each case we obtain only 13860 distinct consequences.
Combining the consequences for $S$ and $T$ produces 17640
distinct consequences, which we order arbitrarily.
These consequences contain only 53520 distinct monomials
(in normal form with respect to the commutativity of $S$ and $T$, see below),
and this allows us to decrease the size of the matrix computations.

\begin{example}
Recall the ternary RB relation $R$ for the unary operator $S$:
\[
R :=
S({\ast})S({\ast})S({\ast}) 
- S({\ast}S({\ast})S({\ast})) 
- S(S({\ast}){\ast}S({\ast})) 
- S(S({\ast})S({\ast}){\ast}).
\]
The following are the partial compositions of $R$ with $A$ and $T$:
\begin{align*}
A \circ_1 R &= 
S({\ast})S({\ast})S({\ast}){\ast}{\ast} 
- S({\ast}S({\ast})S({\ast})){\ast}{\ast} 
- S(S({\ast}){\ast}S({\ast})){\ast}{\ast} 
- S(S({\ast})S({\ast}){\ast}){\ast}{\ast},
\\
A \circ_2 R &= 
{\ast}S({\ast})S({\ast})S({\ast}){\ast} 
- {\ast}S({\ast}S({\ast})S({\ast})){\ast}
- {\ast}S(S({\ast}){\ast}S({\ast})){\ast} 
- {\ast}S(S({\ast})S({\ast}){\ast}){\ast},
\\
A \circ_3 R &= 
{\ast}{\ast}S({\ast})S({\ast})S({\ast}) 
- {\ast}{\ast}S({\ast}S({\ast})S({\ast}))
- {\ast}{\ast}S(S({\ast}){\ast}S({\ast}))
- {\ast}{\ast}S(S({\ast})S({\ast}){\ast}),
\\
R \circ_1 A &= 
S({\ast}{\ast}{\ast})S({\ast})S({\ast}) 
- S({\ast}{\ast}{\ast}S({\ast})S({\ast})) 
- S(S({\ast}{\ast}{\ast}){\ast}S({\ast})) 
- S(S({\ast}{\ast}{\ast})S({\ast}){\ast}),
\\
R \circ_2 A &= 
S({\ast})S({\ast}{\ast}{\ast})S({\ast}) 
- S({\ast}S({\ast}{\ast}{\ast})S({\ast})) 
- S(S({\ast}){\ast}{\ast}{\ast}S({\ast})) 
- S(S({\ast})S({\ast}{\ast}{\ast}){\ast}),
\\
R \circ_3 A &= 
S({\ast})S({\ast})S({\ast}{\ast}{\ast}) 
- S({\ast}S({\ast})S({\ast}{\ast}{\ast})) 
- S(S({\ast}){\ast}S({\ast}{\ast}{\ast})) 
- S(S({\ast})S({\ast}){\ast}{\ast}{\ast}),
\\\
T \circ_1 R &=  
T[S({\ast})S({\ast})S({\ast})]
- T[S({\ast}S({\ast})S({\ast}))]
- T[S(S({\ast}){\ast}S({\ast}))]
\\
&\quad
- T[S(S({\ast})S({\ast}){\ast})],
\\
R \circ_1 T &= 
S(T[{\ast}])S({\ast})S({\ast}) 
- S(T[{\ast}]S({\ast})S({\ast})) 
- S(S(T[{\ast}]){\ast}S({\ast})) 
\\
&\quad
- S(S(T[{\ast}])S({\ast}){\ast}),
\\
R \circ_2 T &= 
S({\ast})S(T[{\ast}])S({\ast}) 
- S({\ast}S(T[{\ast}])S({\ast})) 
- S(S({\ast})T[{\ast}]S({\ast})) 
\\
&\quad
- S(S({\ast})S(T[{\ast}]){\ast}),
\\
R \circ_3 T &= 
S({\ast})S({\ast})S(T[{\ast}]) 
- S({\ast}S({\ast})S(T[{\ast}])) 
- S(S({\ast}){\ast}S(T[{\ast}])) 
\\
&\quad
- S(S({\ast})S({\ast})T[{\ast}]).
\end{align*}
\end{example}

\begin{example}
\label{excon}
The RB relations $R_S$ and $R_T$ for $S$ and $T$ have  
tridegrees $(3,3,0)$ and $(3,0,3)$ respectively.
In the computations that follow we will be concerned only with the consequences of
$R_S$ and $R_T$ in tridegree $(5,4,4)$.
To produce the consequences of $R_S$ we need to apply partial compositions
corresponding to some permutation of the multiset $(A,S,T,T,T,T)$;
for $R_T$ we use $(A,S,S,S,S,T)$.
For example, consider the following consequence of $R_S$:
\[
( ( S \circ_1 ( ( ( R_S \circ_1 T ) \circ_2 T ) \circ_3 A ) ) \circ_5 T ) \circ_3 T.
\]
We start with $R_0 = R_S$ and illustrate each step in the computation:
\begin{align*}
R_0 &=
S({\ast})S({\ast})S({\ast}) 
- S({\ast}S({\ast})S({\ast})) 
- S(S({\ast}){\ast}S({\ast})) 
- S(S({\ast})S({\ast}){\ast}),
\\
R_1 = R_0 \circ_1 T &=
S(T[{\ast}])S({\ast})S({\ast}) 
- S(T[{\ast}]S({\ast})S({\ast})) 
- S(S(T[{\ast}]){\ast}S({\ast})) 
\\
&\quad {}
- S(S(T[{\ast}])S({\ast}){\ast}),
\\
R_2 = R_1 \circ_2 T &=
S(T[{\ast}])S(T[{\ast}])S({\ast}) 
- S(T[{\ast}]S(T[{\ast}])S({\ast})) 
- S(S(T[{\ast}])T[{\ast}]S({\ast}))
\\
&\quad {} 
- S(S(T[{\ast}])S(T[{\ast}]){\ast}),
\\
R_3 = R_2 \circ_3 A &=
S(T[{\ast}])S(T[{\ast}])S({\ast}{\ast}{\ast}) 
- S(T[{\ast}]S(T[{\ast}])S({\ast}{\ast}{\ast})) 
\\
&\quad {}
- S(S(T[{\ast}])T[{\ast}]S({\ast}{\ast}{\ast})) 
- S(S(T[{\ast}])S(T[{\ast}]){\ast}{\ast}{\ast}),
\\
R_4 = S \circ_1 R_3 &=
S(S(T[{\ast}])S(T[{\ast}])S({\ast}{\ast}{\ast}))
- S(S(T[{\ast}]S(T[{\ast}])S({\ast}{\ast}{\ast}))) 
\\
&\quad {}
- S(S(S(T[{\ast}])T[{\ast}]S({\ast}{\ast}{\ast}))) 
- S(S(S(T[{\ast}])S(T[{\ast}]){\ast}{\ast}{\ast})),
\\
R_5 = R_4 \circ_5 T &=
S(S(T[{\ast}])S(T[{\ast}])S({\ast}{\ast}T[{\ast}]))
- S(S(T[{\ast}]S(T[{\ast}])S({\ast}{\ast}T[{\ast}]))) 
\\
&\quad {}
- S(S(S(T[{\ast}])T[{\ast}]S({\ast}{\ast}T[{\ast}]))) 
- S(S(S(T[{\ast}])S(T[{\ast}]){\ast}{\ast}T[{\ast}])),
\\
R_6 = R_5 \circ_3 T &=
S(S(T[{\ast}])S(T[{\ast}])S(T[{\ast}]{\ast}T[{\ast}]))
- S(S(T[{\ast}]S(T[{\ast}])S(T[{\ast}]{\ast}T[{\ast}]))) 
\\
&\quad {}
- S(S(S(T[{\ast}])T[{\ast}]S(T[{\ast}]{\ast}T[{\ast}]))) 
\\
&\quad {}
- S(S(S(T[{\ast}])S(T[{\ast}])T[{\ast}]{\ast}T[{\ast}])).
\end{align*}
\normalsize
\end{example}

\begin{remark}
Combining the monomials (in normal form) in the consequences of the RB relations with the monomials
(in normal form) in the image of the rewriting morphism gives 53521 distinct monomials.
The unique monomial which appears in the image of the rewriting morphism 
but does not appear as a monomial in the consequences of the RB relations is
$T[S({\ast})]T[S({\ast})]{\ast}T[S({\ast})]T[S({\ast})]$.
\end{remark}


\subsubsection*{Computing the $ST$ normal form}

In general the consequences of the RB relations will not be in normal form with respect to 
the commutativity of $S$ and $T$.
By normal form we mean that every occurrence of $S(T[x])$ must be replaced by $T[S(x)]$.
An algorithm for computing the normal form ($ST \rightarrow TS$)
of a monomial in $\au_{3,2}$ appears in Figure \ref{algST}.
The pointer $p$ starts at the right end of the string
and moves to the left, searching for occurrences of the substring $S(T[$,
and the corresponding right delimiters $])$.
We then replace $S(T[$ by $T[S($ and $)]$ by $])$, but note that this may introduce a new occurrence
of $S(T[$ to the right of the pointer, which requires that the pointer be reset
to the right end of the string.

\begin{example}
We present some examples of normal form computation:
\begin{alignat*}{2}
&S({\ast}{\ast}S(T[T[T[T[{\ast}]]]])S({\ast})S({\ast})) \quad &\longmapsto \quad 
&S({\ast}{\ast}T[T[T[T[S({\ast})]]]]S({\ast})S({\ast})),
\\
&S(T[S(T[{\ast}])])S(T[{\ast}{\ast}T[{\ast}]])S({\ast}) \quad &\longmapsto \quad 
&T[T[S(S({\ast}))]]T[S({\ast}{\ast}T[{\ast}])]S({\ast}), 
\\
&T[S({\ast})S(T[{\ast}])S(S(T[T[{\ast}{\ast}{\ast}]]))] \quad &\longmapsto \quad 
&T[S({\ast})T[S({\ast})]T[T[S(S({\ast}{\ast}{\ast}))]]], 
\\
&T[T[S({\ast})S({\ast})S({\ast})]S(T[{\ast}])T[{\ast}]] \quad &\longmapsto \quad 
&T[T[S({\ast})S({\ast})S({\ast})]T[S({\ast})]T[{\ast}]],
\\
&S(S({\ast})){\ast}T[S(T[S({\ast})])]T[{\ast}]T[{\ast}] \quad &\longmapsto \quad 
&S(S({\ast})){\ast}T[T[S(S({\ast}))]]T[{\ast}]T[{\ast}],
\\
&S(T[{\ast}]){\ast}S(T[S({\ast})]T[{\ast}]T[S({\ast})]) \quad &\longmapsto \quad 
&T[S({\ast})]{\ast}S(T[S({\ast})]T[{\ast}]T[S({\ast})]),
\\
&S(T[S(T[S(S({\ast}))])]T[{\ast}]T[{\ast}]){\ast}{\ast} \quad &\longmapsto \quad 
&S(T[T[S(S(S({\ast})))]]T[{\ast}]T[{\ast}]){\ast}{\ast},
\\
&T[{\ast}]T[S(T[{\ast}])]T[{\ast}S({\ast})S(S({\ast}))] \quad &\longmapsto \quad 
&T[{\ast}]T[T[S({\ast})]]T[{\ast}S({\ast})S(S({\ast}))].
\end{alignat*}
\end{example}

\begin{figure}
\begin{enumerate}
\item
$m_2 := m$
\item
$\ell := \text{length}( m_2 )$
\item
$p := \ell$ 
\item
While $p > 0$ do:
  \begin{enumerate}
  \item
  $\texttt{changed} := \text{false}$
  \item
  $v := m_2[ p \cdots \min(\ell,p+3) ]$
  \item
  If $v$ = ``$S(T[$'' then
    \begin{enumerate}
    \item[]
    (Find matching right parenthesis.)
    \item
    $\texttt{sl} := 1$, $\texttt{sr} := 0$, $i := p+1$
    \item
    While $\texttt{sl} > \texttt{sr}$ and $i < \ell$ do: 
      \begin{itemize}
      \item
      $i := i + 1$
      \item 
      If $m_2[i]$ = ``('' then $\texttt{sl} := \texttt{sl} + 1$
      \item
      If $m_2[i]$ = ``)'' then $\texttt{sr} := \texttt{sr} + 1$
      \end{itemize}
    \item[]
    (Find matching right bracket.)
    \item
    $\texttt{tl} := 1$, $\texttt{tr} := 0$, $j := p+3$
    \item
    While $\texttt{tl} > \texttt{tr}$ and $j < \ell$ do: 
      \begin{itemize}
      \item
      $j := j + 1$ 
      \item 
      If $m_2[j]$ = ``['' then $\texttt{tl} := \texttt{tl} + 1$
      \item
      If $m_2[j]$ = ``]'' then $\texttt{tr} := \texttt{tr} + 1$
      \end{itemize}
    \item[]
    (Right bracket immediately precedes right parenthesis.)
    \item
    If $j = i-1$ then 
      \begin{itemize}
      \item
      $\texttt{changed} := \text{true}$
      \item
      $m_2 := \text{concatenate}($ 
      \item[]
      $m_2[1{\cdots}p{-}1]$, ``$T[S($'', $m_2[p{+}4{\cdots}j{-}1] )$, ``$)]$'',
      $m_2[j{+}2{\cdots}\ell]$
      )
      \end{itemize}
    \end{enumerate}
  \item
  If $\texttt{changed}$ then $p := \ell$ else $p := p - 1$
  \end{enumerate}
\item
Return $m_2$
\end{enumerate}
\vspace{-8pt}
\caption{Algorithm to compute the normal form ($ST \rightarrow TS$)}
\label{algST}
\end{figure}


Before proceeding to the main result of this section (Theorem \ref{quadritheorem}), 
we present two results about the auxiliary operations.
	
\begin{proposition}
\label{totalassociative}
The total sum operation for the ternary analog of quadri-algebras is associative.
\end{proposition}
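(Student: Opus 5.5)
The plan is to factor the total sum through the two commuting Rota-Baxter operators and then apply Proposition \ref{sumoperationassociative} twice, once for each operator. First I will compute the row sums. Fix the index $i$ in $\omega_{ij}$; by the rewriting array, $i$ is the argument without $S$. Summing over $j$ gives
\[
\omega_{i\bullet}(x_1,x_2,x_3) \;\longmapsto\; \sum_{j=1}^3 \prod_{k=1}^3 S^{1-\delta_{ki}}\, T^{1-\delta_{kj}}(x_k).
\]
Since $S$ and $T$ commute, the $S$ factors can be pulled outward. So $\omega_{i\bullet}$ is the expression $\sum_j T(y_1)\cdots\widehat{T}(y_j)\cdots T(y_3)$ evaluated at $y_k = S^{1-\delta_{ki}}(x_k)$. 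Equivalently, $\omega_{i\bullet}(x_1,x_2,x_3) = \langle y_1,y_2,y_3\rangle_T$, where $\langle\ ,\ ,\ \rangle_T$ denotes the sum operation of Proposition \ref{sumoperationassociative} built from $T$.

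Next I will sum over $i$, so the total sum becomes
\[
\omega_{\bullet\bullet}(x_1,x_2,x_3) = \sum_{i=1}^3 \langle S^{1-\delta_{1i}}x_1, S^{1-\delta_{2i}}x_2, S^{1-\delta_{3i}}x_3\rangle_T .
\]
Define $\mu := \langle\ ,\ ,\ \rangle_T$. By Proposition \ref{sumoperationassociative} (the $n=3$ case, applied to the RB operator $T$), $\mu$ is an associative ternary operation, and $\omega_{\bullet\bullet}$ is the sum operation built from $S$ using the multiplication $\mu$ instead of the original product. I therefore need $S$ to be a ternary RB operator of weight $0$ with respect to $\mu$. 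Once that holds, Proposition \ref{sumoperationassociative} applies a second time, to the pair $(\mu,S)$, and gives associativity of $\omega_{\bullet\bullet}$. The proof of that proposition uses only associativity of the ternary product and the RB relation, so it transfers verbatim to $\mu$.

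The main step is checking that $S$ is RB for $\mu$. I will write
\[
\mu(Sx_1,Sx_2,Sx_3) = \sum_j \prod_k T^{1-\delta_{kj}}(Sx_k).
\]
For each fixed $j$, I commute $S$ past $T$ and apply the original ternary RB relation for $S$ to the elements $z_k = T^{1-\delta_{kj}}x_k$. This gives
\[
\sum_i S\Big(\prod_k S^{1-\delta_{ki}} T^{1-\delta_{kj}} x_k\Big).
\]
Summing over $j$ and using linearity of $S$, this equals
\[
\sum_i S\big(\mu(S^{1-\delta_{1i}}x_1,\dots)\big),
\]
which is exactly the RB relation for $S$ with respect to $\mu$. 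This uses only the RB relation for $S$ and $ST=TS$, so no commutation of $S$ with $\mu$ is needed. The obstacle I expect is bookkeeping: keeping track of the index that says which argument lacks which operator. If any subtlety arises, the fallback is direct verification in the style of Proposition \ref{sumoperationassociative}, reducing each of the three association types to $\sum_{k,l} \prod_m S^{1-\delta_{mk}}T^{1-\delta_{ml}}(x_m)$ over five arguments.
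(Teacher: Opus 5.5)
Your proposal is correct and is essentially the paper's argument. The paper builds the sum operation from $S$ first, checks term by term (using $ST=TS$ and the RB relation for $T$) that $T$ is Rota-Baxter for it, and then applies Proposition \ref{sumoperationassociative} twice; you do the same with the roles of $S$ and $T$ interchanged, which is exactly the alternative the paper notes in the proof of Proposition \ref{rowcoldendriform}.
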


\begin{proof}
We assume that $S$ is a ternary Rota-Baxter operator for the ternary associative operation $xyz$.
Proposition \ref{sumoperationassociative} for $n = 3$ and $R = S$ shows that the sum operation
is associative; we write it as
\begin{equation}
\label{bracketoperation}
\langle x, y, z \rangle := x S(y) S(z) + S(x) y S(z) + S(x) S(y) z.
\end{equation}
We assume that $T$ is another RB operator for $xyz$ and that $ST = TS$.
We will show that $T$ is an RB operator for $\langle x, y, z \rangle$.
Since $\langle x,y,z \rangle$ is associative, another application of Proposition \ref{sumoperationassociative}
will show that $\{ x, y, z \}$ is associative where
\[
\{ x, y, z \}
:=
\langle x, T[y], T[z] \rangle + \langle T[x], y, T[z] \rangle + \langle T[x], T[y], z \rangle
\]
Expanding $\{ x, y, z \}$ using \eqref{bracketoperation} shows that it is the total sum operation.

The claim that $T$ is a ternary RB operator for $\langle  x, y, z  \rangle $ states that
\begin{align}
\label{TRB}
&
\langle  T[x], T[y], T[z]  \rangle 
=
\\
&\quad
T[ \langle  x, T[y], T[z]  \rangle  ] +
T[ \langle  T[x], y, T[z]  \rangle  ] +
T[ \langle  T[x], T[y], z  \rangle  ].
\notag
\end{align}
Using \eqref{bracketoperation} and $ST = TS$, the left side of \eqref{TRB} equals
\begin{align*}
\label{leftside}
&
L := 
\langle  T[x], T[y], T[z]  \rangle  = 
\\
&\quad
T[x] T[S(y)] T[S(z)] + T[S(x)] T[y] T[S(z)] + T[S(x)] T[S(y)] T[z].
\end{align*}
Similarly, the three terms on the right side of \eqref{TRB} equal
\begin{align*}
&
R_1 := 
T[ \langle  x, T[y], T[z]  \rangle  ] =
\\
&\quad
T[ x T[S(y)] T[S(z)] ] + T[ S(x) T[y] T[S(z)] ] + T[ S(x) T[S(y)] T[z] ],
\\
&
R_2 := 
T[ \langle  T[x], y, T[z]  \rangle  ] = 
\\
&\quad
T[ T[x] S(y) T[S(z)] ] + T[ T[S(x)] y T[S(z)] ] + T[ T[S(x)] S(y) T[z] ],
\\
&
R_3 := T[ \langle  T[x], T[y], z  \rangle  ] =
\\
&\quad
T[ T[x] T[S(y)] S(z) ] + T[ T[S(x)] T[y] S(z) ] + T[ T[S(x)] T[S(y)] z ].
\end{align*}
Since $T$ is an RB operator for $xyz$, and considering the product
$x S(y) S(z)$, the first term of $L$ equals the sum of the first terms of $R_1, R_2, R_3$:
\begin{align*}
&
T[x] T[S(y)] T[S(z)]
=
\\
&\quad
T[ x T[S(y)] T[S(z)] ]
+
T[ T[x] S(y) T[S(z)] ]
+
T[ T[x] T[S(y)] S(z) ].
\end{align*}
Similarly, the second (third) terms of $L$ match the second (third) terms of $R_1, R_2, R_3$
corresponding to the products $S(x) y S(z)$ and $S(x) S(y) z$.
\end{proof}

\begin{proposition}
\label{rowcoldendriform}
The row (column) sums for the ternary analog of quadri-algebras
satisfy the defining relations for the ternary analog of dendriform algebras.
\end{proposition}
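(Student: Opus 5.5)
The plan is to reduce the statement to Theorem \ref{ternarydendriformrelations} by exhibiting the row sums as the ternary dendriform operations of Example \ref{rewritingternarydendriform}, computed inside a suitable ternary Rota-Baxter algebra. Work in $\rb_{3,2}$ with the associative product $xyz$ and commuting ternary RB operators $S$ and $T$. By Proposition \ref{sumoperationassociative} applied with $R = T$, the operation
\[
\langle x, y, z \rangle_T := x\,T[y]\,T[z] + T[x]\,y\,T[z] + T[x]\,T[y]\,z
\]
is associative. Since $S$ is applied to every argument except the $i$-th in $\omega_{ij}$, and summing over $j$ removes $T$ from each argument in turn, I expect the rewritings to satisfy
\[
\omega_{1\bullet} = \langle x, S(y), S(z) \rangle_T,
\qquad
\omega_{2\bullet} = \langle S(x), y, S(z) \rangle_T,
\qquad
\omega_{3\bullet} = \langle S(x), S(y), z \rangle_T .
\]
This should follow directly from the array defining $r_{3,2}$, using $ST = TS$ (normal form $T[S(\cdot)]$). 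These are exactly the three operations $r_{3,1}(\omega_1), r_{3,1}(\omega_2), r_{3,1}(\omega_3)$, now formed with respect to the product $\langle\,\rangle_T$ and the operator $S$.

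The second step is to show that $S$ is a ternary RB operator of weight $0$ for $\langle\,\rangle_T$. This is the mirror image of the argument in the proof of Proposition \ref{totalassociative}, with the roles of $S$ and $T$ interchanged. I would expand $\langle S(x), S(y), S(z)\rangle_T$ into three terms and expand the three terms $S(\langle \cdots \rangle_T)$ into nine terms. I would then group them according to the $T$-pattern ($x T[y] T[z]$, $T[x] y T[z]$, $T[x] T[y] z$). Within each group, the RB relation for $S$ on the original product, applied to arguments such as $x$, $T[y]$, $T[z]$ and combined with $ST = TS$, should match one term on the left with three terms on the right. This bookkeeping is the only real computation, and I expect it to be the main (though routine) obstacle: one must check carefully that commutativity of $S$ and $T$ is used consistently to move $S$ inside $T$.

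The last step is operadic. The two preceding steps give an operad morphism $\phi\colon \rb_{3,1} \to \rb_{3,2}$ sending the associative generator to $\langle\,\rangle_T$ and the operator to $S$; it is well defined because $\langle\,\rangle_T$ is associative and satisfies the ternary RB relation with respect to $S$. The composition $\phi \circ f_{3,1}$ sends $\omega_i$ to $\omega_{i\bullet}$. Every relation in Theorem \ref{ternarydendriformrelations} lies in $\ker f_{3,1}$, so its image under $\phi \circ f_{3,1}$ vanishes. Hence the row sums satisfy the ten relations. For the column sums, transposing the array corresponds to swapping $S$ and $T$, as noted after the definition of $r_{3,2}$. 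I would therefore repeat the argument with $\langle x,y,z\rangle_S := x S(y) S(z) + S(x) y S(z) + S(x) S(y) z$ and with $T$ playing the role of the RB operator. That $T$ is RB for $\langle\,\rangle_S$ is precisely the identity \eqref{TRB} already verified in the proof of Proposition \ref{totalassociative}.
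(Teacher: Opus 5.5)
Your proposal is correct and follows essentially the same route as the paper. The paper also writes the row sums as $\langle x,S(y),S(z)\rangle$, $\langle S(x),y,S(z)\rangle$, $\langle S(x),S(y),z\rangle$ for the associative operation $\langle x,y,z\rangle = xT[y]T[z]+T[x]yT[z]+T[x]T[y]z$, notes that $S$ is a ternary RB operator for it by the argument of Proposition~\ref{totalassociative} with $S$ and $T$ interchanged, and then applies Theorem~\ref{ternarydendriformrelations}. Your explicit morphism $\phi\colon \rb_{3,1}\to\rb_{3,2}$ just makes precise the paper's remark that it suffices to realize the row sums as $xU(y)U(z)$, $U(x)yU(z)$, $U(x)U(y)z$ for some RB operator $U$ on some associative algebra.
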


\begin{proof}
We prove this for the row sums; the argument for the column sums is similar.
It suffices to express the row sums as $xU(y)U(z)$, $U(x)yU(z)$, $U(x)U(y)z$ 
for some Rota-Baxter operator $U$ on some associative algebra. 
Then the claim follows from Theorem \ref{ternarydendriformrelations}.
The row sums are
\begin{align*}
\omega_{ 1 \bullet }(x,y,z) &:=
x T[S(y)] T[S(z)] +
T[x] S(y) T[S(z)] +
T[x] T[S(y)] S(z), 
\\
\omega_{ 2 \bullet }(x,y,z) &:= 
S(x) T[y] T[S(z)] +
T[S(x)] y T[S(z)] +
T[S(x)] T[y] S(z), 
\\
\omega_{ 3 \bullet }(x,y,z) &:=
S(x) T[S(y)] T[z] +
T[S(x)] S(y) T[z] +
T[S(x)] T[S(y)] z.
\end{align*}
Define a new operation
(which is associative by Proposition \ref{sumoperationassociative})
\[
\langle x, y, z \rangle := xT[y]T[z] + T[x]yT[z] + T[x]T[y]z.
\]
The row sums are then 
$\langle x,S(y),S(z) \rangle$, $\langle S(x),y,S(z) \rangle$, $\langle S(x),S(y),z \rangle$.
Clearly $S$ is an RB operator for $\langle x,y,z \rangle$ as in the proof of Proposition \ref{totalassociative}
with $S$ and $T$ interchanged.
(This provides a different proof that the total sum is associative.)
\end{proof}

We are now ready to compute the quadratic relations satisfied 
(modulo the RB relations) by the rewritings of the nonassociative monomials.
The following fact allows us to relate computations over $\mathbb{Q}$
with those over $\mathbb{F}_p$ for prime $p$.

\begin{lemma}
\label{ranklemma}
Let $M$ be an integer matrix and let $p$ be a prime. 
Then
\[
\mathrm{rank}_{\,\mathbb{F}_p}(M) \le \mathrm{rank}_{\,\mathbb{Q}}(M),
\quad \mathrm{equivalently}, \quad
\mathrm{nullity}_{\mathbb{F}_p}(M) \ge \mathrm{nullity}_{\mathbb{Q}}(M).
\]
\end{lemma}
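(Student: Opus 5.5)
The plan is to characterize rank by nonvanishing minors and then compare determinants over $\mathbb{Z}$ and over $\mathbb{F}_p$. First recall that for a matrix over any field, the rank equals the largest $r$ such that some $r \times r$ minor is nonzero. Let $M$ have integer entries, and let $\overline{M}$ denote its reduction modulo $p$, regarded as a matrix over $\mathbb{F}_p$. Then $\mathrm{rank}_{\mathbb{F}_p}(M)$ means $\mathrm{rank}(\overline{M})$.

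The key observation is that reduction modulo $p$ is a ring homomorphism $\mathbb{Z} \to \mathbb{F}_p$. The determinant is a polynomial with integer coefficients in the matrix entries, so every minor of $\overline{M}$ is the reduction modulo $p$ of the corresponding integer minor of $M$. Now set $r = \mathrm{rank}_{\mathbb{F}_p}(M)$. There is an $r \times r$ submatrix whose determinant is nonzero in $\mathbb{F}_p$, so the corresponding integer determinant is not divisible by $p$. In particular that determinant is a nonzero integer, hence a nonzero rational number. Therefore $\mathrm{rank}_{\mathbb{Q}}(M) \ge r$.

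For the equivalent formulation, apply rank–nullity over each field. Here the nullity is the dimension of the (right or left) kernel, and it equals the number of columns (respectively rows) minus the rank. Since the number of columns (rows) is the same over both fields, the rank inequality reverses into the nullity inequality.

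There is essentially no obstacle. The only point needing care is to state clearly that the rank over $\mathbb{F}_p$ refers to the reduced matrix $\overline{M}$, and that rank over $\mathbb{Q}$ of an integer matrix is computed with the same minors. An alternative route would show that any $\mathbb{Q}$-linear dependence among rows can be scaled to a primitive integer dependence that survives reduction. That argument proves the nullity inequality directly, but the minors argument is shorter, so I would use it.
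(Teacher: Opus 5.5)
Your proposal is correct and follows the paper's proof, which is the one-line observation that the rank is the size of the largest square submatrix with nonzero determinant. You fill in the details the paper leaves implicit: minors commute with reduction modulo $p$, a minor that is nonzero in $\mathbb{F}_p$ is a nonzero integer, and rank--nullity converts the rank inequality into the nullity inequality.
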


\begin{proof}
The rank is the size of the largest submatrix with a nonzero determinant.
\end{proof}

\begin{table}
\begin{alignat*}{4}
1&\colon
&\quad
( ( {\ast} {\ast} {\ast} )_{22} {\ast} {\ast} )_{11} 
&= 
( {\ast} ( {\ast} {\ast} {\ast} )_{11} {\ast} )_{22} 
&\qquad\qquad
2&\colon
&\quad
( ( {\ast} {\ast} {\ast} )_{23} {\ast} {\ast} )_{11} 
&= 
( {\ast} ( {\ast} {\ast} {\ast} )_{12} {\ast} )_{22} 
\\
3&\colon
&\quad
( ( {\ast} {\ast} {\ast} )_{32} {\ast} {\ast} )_{11} 
&= 
( {\ast} ( {\ast} {\ast} {\ast} )_{21} {\ast} )_{22} 
&\qquad\qquad
4&\colon
&\quad
( ( {\ast} {\ast} {\ast} )_{33} {\ast} {\ast} )_{11} 
&= 
( {\ast} {\ast} ( {\ast} {\ast} {\ast} )_{11} )_{33} 
\\ 
5&\colon
&\quad
( {\ast} ( {\ast} {\ast} {\ast} )_{22} {\ast} )_{22} 
&= 
( {\ast} {\ast} ( {\ast} {\ast} {\ast} )_{11} )_{33} 
&\qquad\qquad
6&\colon
&\quad
( {\ast} ( {\ast} {\ast} {\ast} )_{23} {\ast} )_{22} 
&= 
( {\ast} {\ast} ( {\ast} {\ast} {\ast} )_{12} )_{33} 
\\ 
7&\colon
&\quad
( {\ast} ( {\ast} {\ast} {\ast} )_{32} {\ast} )_{22} 
&= 
( {\ast} {\ast} ( {\ast} {\ast} {\ast} )_{21} )_{33} 
&\qquad\qquad
8&\colon
&\quad
( {\ast} ( {\ast} {\ast} {\ast} )_{33} {\ast} )_{22} 
&= 
( {\ast} {\ast} ( {\ast} {\ast} {\ast} )_{22} )_{33} 
\\ 
9&\colon
&\quad
( ( {\ast} {\ast} {\ast} )_{12} {\ast} {\ast} )_{11} 
&=
( {\ast} ( {\ast} {\ast} {\ast} )_{\bullet 1} {\ast} )_{12} 
&\qquad\qquad
10&\colon
&\quad
( ( {\ast} {\ast} {\ast} )_{\bullet 2} {\ast} {\ast} )_{21} 
&=
( {\ast} ( {\ast} {\ast} {\ast} )_{31} {\ast} )_{22} 
\\ 
11&\colon
&\quad
( ( {\ast} {\ast} {\ast} )_{13} {\ast} {\ast} )_{11} 
&=
( {\ast} ( {\ast} {\ast} {\ast} )_{\bullet 2} {\ast} )_{12} 
&\qquad\qquad
12&\colon
&\quad
( ( {\ast} {\ast} {\ast} )_{13} {\ast} {\ast} )_{11} 
&=
( {\ast} {\ast} ( {\ast} {\ast} {\ast} )_{\bullet 1} )_{13} 
\\ 
13&\colon
&\quad
( ( {\ast} {\ast} {\ast} )_{\bullet 3} {\ast} {\ast} )_{21} 
&=
( {\ast} {\ast} ( {\ast} {\ast} {\ast} )_{21} )_{33} 
&\qquad\qquad
14&\colon
&\quad
( ( {\ast} {\ast} {\ast} )_{\bullet 3} {\ast} {\ast} )_{31} 
&=
( {\ast} {\ast} ( {\ast} {\ast} {\ast} )_{31} )_{33} 
\\ 
15&\colon
&\quad
( ( {\ast} {\ast} {\ast} )_{21} {\ast} {\ast} )_{11} 
&=
( {\ast} ( {\ast} {\ast} {\ast} )_{1 \bullet} {\ast} )_{21} 
&\qquad\qquad
16&\colon
&\quad
( ( {\ast} {\ast} {\ast} )_{2 \bullet} {\ast} {\ast} )_{12} 
&=
( {\ast} ( {\ast} {\ast} {\ast} )_{13} {\ast} )_{22} 
\\ 
17&\colon
&\quad
( ( {\ast} {\ast} {\ast} )_{23} {\ast} {\ast} )_{11} 
&=
( {\ast} {\ast} ( {\ast} {\ast} {\ast} )_{\bullet 1} )_{23} 
&\qquad\qquad
18&\colon
&\quad
( ( {\ast} {\ast} {\ast} )_{31} {\ast} {\ast} )_{11} 
&=
( {\ast} ( {\ast} {\ast} {\ast} )_{2 \bullet} {\ast} )_{21} 
\\ 
19&\colon
&\quad
( ( {\ast} {\ast} {\ast} )_{31} {\ast} {\ast} )_{11} 
&=
( {\ast} {\ast} ( {\ast} {\ast} {\ast} )_{1 \bullet} )_{31} 
&\qquad\qquad
20&\colon
&\quad
( ( {\ast} {\ast} {\ast} )_{3 \bullet} {\ast} {\ast} )_{12} 
&=
( {\ast} {\ast} ( {\ast} {\ast} {\ast} )_{12} )_{33} 
\\ 
21&\colon
&\quad
( ( {\ast} {\ast} {\ast} )_{3 \bullet} {\ast} {\ast} )_{13} 
&=
( {\ast} {\ast} ( {\ast} {\ast} {\ast} )_{13} )_{33} 
&\qquad\qquad
22&\colon
&\quad
( ( {\ast} {\ast} {\ast} )_{32} {\ast} {\ast} )_{11} 
&=
( {\ast} {\ast} ( {\ast} {\ast} {\ast} )_{1 \bullet} )_{32} 
\\ 
23&\colon
&\quad
( {\ast} ( {\ast} {\ast} {\ast} )_{\bullet 2} {\ast} )_{32} 
&=
( {\ast} {\ast} ( {\ast} {\ast} {\ast} )_{31} )_{33} 
&\qquad\qquad
24&\colon
&\quad
( {\ast} ( {\ast} {\ast} {\ast} )_{13} {\ast} )_{22} 
&=
( {\ast} {\ast} ( {\ast} {\ast} {\ast} )_{\bullet 2} )_{23} 
\\ 
25&\colon
&\quad
( {\ast} ( {\ast} {\ast} {\ast} )_{\bullet 3} {\ast} )_{32} 
&=
( {\ast} {\ast} ( {\ast} {\ast} {\ast} )_{32} )_{33} 
&\qquad\qquad
26&\colon
&\quad
( {\ast} ( {\ast} {\ast} {\ast} )_{2 \bullet} {\ast} )_{23} 
&=
( {\ast} {\ast} ( {\ast} {\ast} {\ast} )_{13} )_{33} 
\\ 
27&\colon
&\quad
( {\ast} ( {\ast} {\ast} {\ast} )_{31} {\ast} )_{22} 
&=
( {\ast} {\ast} ( {\ast} {\ast} {\ast} )_{2 \bullet} )_{32} 
&\qquad\qquad
28&\colon
&\quad
( {\ast} ( {\ast} {\ast} {\ast} )_{3 \bullet} {\ast} )_{23} 
&=
( {\ast} {\ast} ( {\ast} {\ast} {\ast} )_{23} )_{33} 
\\ 
29&\colon
&\quad
( ( {\ast} {\ast} {\ast} )_{1 \bullet} {\ast} {\ast} )_{12} 
&=
( {\ast} {\ast} ( {\ast} {\ast} {\ast} )_{\bullet 2} )_{13} 
&\qquad\qquad
30&\colon
&\quad
( ( {\ast} {\ast} {\ast} )_{1 \bullet} {\ast} {\ast} )_{13} 
&=
( {\ast} {\ast} ( {\ast} {\ast} {\ast} )_{\bullet 3} )_{13} 
\\ 
31&\colon
&\quad
( ( {\ast} {\ast} {\ast} )_{\bullet 1} {\ast} {\ast} )_{21} 
&=
( {\ast} {\ast} ( {\ast} {\ast} {\ast} )_{2 \bullet} )_{31} 
&\qquad\qquad
32&\colon
&\quad
( ( {\ast} {\ast} {\ast} )_{\bullet 1} {\ast} {\ast} )_{31} 
&=
( {\ast} {\ast} ( {\ast} {\ast} {\ast} )_{3 \bullet} )_{31} 
\\ 
33&\colon
&\quad
( ( {\ast} {\ast} {\ast} )_{\bullet 2} {\ast} {\ast} )_{31} 
&=
( {\ast} {\ast} ( {\ast} {\ast} {\ast} )_{3 \bullet} )_{32} 
&\qquad\qquad
34&\colon
&\quad
( ( {\ast} {\ast} {\ast} )_{2\bullet} {\ast} {\ast} )_{13} 
&=
( {\ast} {\ast} ( {\ast} {\ast} {\ast} )_{\bullet 3} )_{23} 
\\ 
35&\colon
&\quad
( {\ast} ( {\ast} {\ast} {\ast} )_{1\bullet} {\ast} )_{23} 
&=
( {\ast} {\ast} ( {\ast} {\ast} {\ast} )_{\bullet 3} )_{23} 
&\qquad\qquad
36&\colon
&\quad
( {\ast} ( {\ast} {\ast} {\ast} )_{\bullet 1} {\ast} )_{32} 
&=
( {\ast} {\ast} ( {\ast} {\ast} {\ast} )_{3\bullet} )_{32} 
\\ 
37&\colon
&\quad
( {\ast} ( {\ast} {\ast} {\ast} )_{\bullet 3} {\ast} )_{12} 
&=
( {\ast} {\ast} ( {\ast} {\ast} {\ast} )_{\bullet 2} )_{13} 
&\qquad\qquad
38&\colon
&\quad
( {\ast} ( {\ast} {\ast} {\ast} )_{3\bullet} {\ast} )_{21} 
&=
( {\ast} {\ast} ( {\ast} {\ast} {\ast} )_{2\bullet} )_{31} 
\\ 
39&\colon
&\quad
( ( {\ast} {\ast} {\ast} )_{11} {\ast} {\ast} )_{11} 
&=
( {\ast} ( {\ast} {\ast} {\ast} )_{\bullet\bullet} {\ast} )_{11} 
&\qquad\qquad
40&\colon
&\quad
( ( {\ast} {\ast} {\ast} )_{11} {\ast} {\ast} )_{11} 
&=
( {\ast} {\ast} ( {\ast} {\ast} {\ast} )_{\bullet\bullet} )_{11} 
\\ 
41&\colon
&\quad
( ( {\ast} {\ast} {\ast} )_{\bullet\bullet} {\ast} {\ast} )_{22} 
&=
( {\ast} {\ast} ( {\ast} {\ast} {\ast} )_{22} )_{33} 
&\qquad\qquad
42&\colon
&\quad
( ( {\ast} {\ast} {\ast} )_{\bullet\bullet} {\ast} {\ast} )_{23} 
&=
( {\ast} {\ast} ( {\ast} {\ast} {\ast} )_{23} )_{33} 
\\ 
43&\colon
&\quad
( ( {\ast} {\ast} {\ast} )_{\bullet\bullet} {\ast} {\ast} )_{32} 
&=
( {\ast} {\ast} ( {\ast} {\ast} {\ast} )_{32} )_{33} 
&\qquad\qquad
44&\colon
&\quad
( ( {\ast} {\ast} {\ast} )_{\bullet\bullet} {\ast} {\ast} )_{33} 
&=
( {\ast} {\ast} ( {\ast} {\ast} {\ast} )_{33} )_{33} 
\\ 
45&\colon
&\quad
( ( {\ast} {\ast} {\ast} )_{12} {\ast} {\ast} )_{11}
&=
( {\ast} {\ast} ( {\ast} {\ast} {\ast} )_{\bullet\bullet} )_{12} 
&\qquad\qquad
46&\colon
&\quad
( ( {\ast} {\ast} {\ast} )_{21} {\ast} {\ast} )_{11} 
&=
( {\ast} {\ast} ( {\ast} {\ast} {\ast} )_{\bullet\bullet} )_{21} 
\\ 
47&\colon
&\quad
( ( {\ast} {\ast} {\ast} )_{22} {\ast} {\ast} )_{11} 
&=
( {\ast} {\ast} ( {\ast} {\ast} {\ast} )_{\bullet\bullet} )_{22} 
&\qquad\qquad
48&\colon
&\quad
( {\ast} ( {\ast} {\ast} {\ast} )_{\bullet\bullet} {\ast} )_{33} 
&=
( {\ast} {\ast} ( {\ast} {\ast} {\ast} )_{33} )_{33} 
\\ 
49&\colon
&\quad
( {\ast} ( {\ast} {\ast} {\ast} )_{\bullet\bullet} {\ast} )_{13} 
&=
( {\ast} {\ast} ( {\ast} {\ast} {\ast} )_{\bullet 3} )_{13} 
&\qquad\qquad
50&\colon
&\quad
( {\ast} ( {\ast} {\ast} {\ast} )_{\bullet\bullet} {\ast} )_{31} 
&=
( {\ast} {\ast} ( {\ast} {\ast} {\ast} )_{3\bullet} )_{31} 
\end{alignat*}
\caption{The 50 quadratic relations defining the operad $\taq$}
\label{defreltaq}
\end{table}

\begin{theorem}
\label{quadritheorem}
Over a field of characteristic 0, 
every quadratic relation satisfied by the 9 ternary operations $({\ast}{\ast}{\ast})_{ij}$
for $1 \le i, j \le 3$ is a linear combination of the 50 relations in Table \ref{defreltaq}.
\end{theorem}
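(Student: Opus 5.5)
The plan mirrors the proof of Theorem \ref{ternarydendriformrelations}, scaled up, with modular arithmetic used for the heavy linear algebra and the rational result certified by Lemma \ref{ranklemma}.

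\textbf{Step 1: set up the matrices.} I would build the consequence matrix $C$ of size $17640 \times 53521$ over the $53521$ normal-form monomials described in the Remark. Its rows are the distinct consequences in $\au_{3,2}(5;4,4)$ of $R_S$ and $R_T$, generated by Figure \ref{generatingconsequences} and reduced by Figure \ref{algST}. Only these columns are needed: a monomial appearing in neither $C$ nor the image of $r_{3,2}$ cannot affect which combinations of rewritings lie in the row space of $C$.

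Next I would build the $243 \times 53521$ rewriting matrix $X$, with a single $1$ per row. A vector $v \in \mathbb{F}^{243}$ is a quadratic relation exactly when $vX$ lies in the row space of $C$. So the relation space is the projection onto the last $243$ coordinates of the row space of
\[
B = \begin{bmatrix} C & O \\ X & I_{243} \end{bmatrix}
\]
intersected with the subspace of rows whose left part vanishes. Equivalently, it has dimension $243 - (\mathrm{rank}[C;X] - \mathrm{rank}\, C)$.

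\textbf{Step 2: compute modulo a prime.} Row reduction of $B$ over $\mathbb{Q}$ at this size is expensive, so I would first reduce $B$ modulo a large prime $p$. From this I read off the rows whose leading $1$s lie in the right block, giving a basis of the kernel modulo $p$. I expect its dimension to be $50$.

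\textbf{Step 3: certify over characteristic 0.} The certificate has two halves.
\begin{enumerate}
\item \emph{Each relation holds.} Every one of the $50$ relations in Table \ref{defreltaq} must be a genuine relation over $\mathbb{Q}$. For each relation $v$ I would exhibit an explicit rational combination of rows of $C$ equal to $vX$, found by solving a sparse system modulo $p$, lifting by rational reconstruction, and checking exactly in integer arithmetic. These vectors also need to be checked to be linearly independent, which is a small $50 \times 243$ rank computation.
\item \emph{There are no others.} The kernel over $\mathbb{Q}$ must have dimension at most $50$. Since the relation space has dimension $243 - \mathrm{rank}[C;X] + \mathrm{rank}\, C$, this bound needs $\mathrm{rank}_{\mathbb{Q}}[C;X] - \mathrm{rank}_{\mathbb{Q}} C \ge 193$. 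By Lemma \ref{ranklemma}, $\mathrm{rank}_{\mathbb{F}_p}[C;X] \le \mathrm{rank}_{\mathbb{Q}}[C;X]$, so a modular computation bounds that rank from below. However, it does not bound $\mathrm{rank}_{\mathbb{Q}} C$ from above, so I would also need a separate argument for $\mathrm{rank}_{\mathbb{Q}} C$.
\end{enumerate}

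I would supply that argument by an explicit dependence certificate. The idea is to exhibit $17640 - \mathrm{rank}_p C$ integer linear dependencies among the rows of $C$, found modulo $p$ and lifted. Each one is verified exactly, and together they force $\mathrm{rank}_{\mathbb{Q}} C \le \mathrm{rank}_p C$. Combined with Lemma \ref{ranklemma} this gives equality, and hence the kernel dimension over $\mathbb{Q}$ equals the modular one, namely $50$.

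A cleaner alternative is to argue purely via the $243$-dimensional quotient. Pick $193$ of the $243$ rewritings that are linearly independent modulo the row space of $C$ modulo $p$. Exhibit a nonzero minor, modulo $p$, of the matrix formed by those rows together with a maximal independent set of rows of $C$. Then check that the rational relation vectors kill the remaining dimensions. Any field of characteristic $0$ then follows from $\mathbb{Q}$ by extension of scalars.

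\textbf{Step 4: simplify the basis.} Finally, I would reduce the kernel basis with LLL to reach the short $\pm1$ relations listed in Table \ref{defreltaq}, recognizing sums such as $\omega_{\bullet j}$, $\omega_{i\bullet}$ and $\omega_{\bullet\bullet}$.

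\textbf{Main obstacle.} I expect the hardest step to be the characteristic-$0$ certification in Step 3, specifically the upper bound on $\mathrm{rank}_{\mathbb{Q}} C$ (or an equivalent way to bound the kernel from above without full rational elimination). I would try the dependency-certificate approach first.
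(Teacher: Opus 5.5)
Your skeleton is the paper's: the block matrix $B$, row reduction modulo $p=101$ giving 50 kernel rows, LLL to shorten them, and explicit characteristic-0 certificates (the paper's Appendix) plus a linear-independence check for $\dim_{\mathbb{Q}}K\ge 50$. Where you differ is the upper bound.

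The paper gets $\dim_{\mathbb{Q}}K\le 50$ in one line. It calls 50 ``the nullity of the matrix representing this composition'' and cites Lemma~\ref{ranklemma}. You correctly observe that this needs more. The quantity $\dim K=243+\mathrm{rank}\,C-\mathrm{rank}[C;X]$ is a difference of two ranks, each of which can only increase from $\mathbb{F}_p$ to $\mathbb{Q}$. The codomain $\rb_{3,2}(5;4,4)$ also changes with the field through $\mathrm{rank}\,C$. So there is no single integer matrix to which the lemma applies. For $C=[p]$, $X=[1]$ one gets $\dim_{\mathbb{Q}}K=1>0=\dim_{\mathbb{F}_p}K$.

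The paper's step would be justified once $\mathrm{rank}_{\mathbb{Q}}C=\mathrm{rank}_{\mathbb{F}_{101}}C$, which the paper does not address. Your syzygy certificate supplies exactly that. From the paper's data, $\mathrm{rank}_{\mathbb{F}_{101}}C=17295-243=17052$ and $\mathrm{rank}_{\mathbb{F}_{101}}[C;X]=17245$. You therefore need $588$ exact integer dependencies among the $17640$ rows of $C$. They must be linearly independent, which holds, e.g., if their reductions mod $p$ are. Then $\dim_{\mathbb{Q}}K\le 243+17052-17245=50$. Your route costs one more, fairly cheap, exact verification, and in exchange makes the upper bound rigorous. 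The paper's route is shorter but leaves that inequality resting on an unstated assumption.

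Your ``cleaner alternative'' does not escape the issue. Let $D$ be a mod-$p$ maximal independent set of rows of $C$. Showing that 193 rewritings are independent modulo $\mathrm{span}_{\mathbb{Q}}D$ bounds only the dimension of $\{v : vX\in\mathrm{span}_{\mathbb{Q}}D\}$. That space can be strictly smaller than $K$ unless $D$ spans $\mathrm{row}_{\mathbb{Q}}(C)$, which is the same rank certificate again. Keep the dependency certificate as the main argument.
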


\begin{proof}
In the following computations we use modular arithmetic (with $p = 101$) owing to the large size of the matrices.
We then provide explicit proofs in characteristic 0 of the relations we obtain using modular methods.

We generate the consequences of the two Rota-Baxter relations,
expressing each as a sequence of partial compositions and as a linear combination of 
two-operator monomials;
see Example \ref{excon}.
We convert each two-operator monomial to its normal form with respect to $ST$ commutativity.
We create the matrix $C$ of consequences of the RB relations,
which has 17640 rows and 53521 columns.
Entry $(i,j)$ of this matrix contains the coefficient of two-operator monomial $j$ in RB consequence $i$.
We create the matrix $X$ of rewritings which has 243 rows and 53521 columns.
Entry $(i,j)$ of this matrix is 1 if two-operator monomial $j$
is the rewriting of nonassociative monomial $i$; otherwise the entry is 0.
We construct the block matrix
\[
B = \left[ \begin{array}{cc} C & O \\ X & I \end{array} \right],
\]
which has 17883 ( = 17640 + 243 ) rows and 53764 ( = 53521 + 243 ) columns.
The matrices $O$ and $I$ are the zero and identity matrices of the obvious sizes.
Using symmetric representatives modulo $p$, the entries of $B$ belong to $\{ 0, \pm 1 \}$,
which are clearly the correct integer values.
Using modular arithmetic, we compute the row canonical form $\mathrm{RCF}(B)$; 
the rank is 17295.
This row reduction corresponds to computing the surjection from $\au_{3,2}(5;4,4)$
onto $\rb_{3,2}(5;4,4)$.
The rows of $\mathrm{RCF}(B)$ whose leading 1s appear in the right part of the matrix
($j > 53521$) represent the coefficient vectors of
the quadratic relations satisfied by the generating operations of $\na_{3,9}$.
We find that there are 50 such rows, starting with row 17246.
The entries of these rows belong to the set $\{ 0, \pm 1 \}$.
We extract the $50 \times 243$ matrix $N$ containing (the right part of) these rows.
The numbers of nonzero entries in the rows of $N$ are
$2^5,
4^{16}, 
6^{14}, 
10^{10}, 
12^4, 
18$.
The number 50 has appeared as the rank of a submatrix of $\mathrm{RCF}(B)$,
but conceptually it is the dimension of the kernel $K$ of the composition 
of the rewriting morphism with the RB surjection.
Since 50 is the nullity of the matrix representing this composition,
Lemma \ref{ranklemma} gives
\begin{equation}
\label{bound1}
\dim_\mathbb{Q} K \le 50.
\end{equation}

We now reinterpret $N$ as an integer matrix with entries $\{ 0, \pm 1 \}$.
We do this so that we can apply the LLL algorithm
for lattice basis reduction to the rows of $N$ in order to find
shorter rows which generate the same lattice;
these shorter rows correspond to relations with fewer terms.
We obtain a new lattice basis which has these numbers of nonzero entries:
$2^8, 
4^{20}, 
6^{10}, 
10^{10}, 
12^2$.
The LLL algorithm has eliminated the row of length 18, 
increased the number of rows of lengths 2 and 4, 
and decreased the number of rows of length 6.
These new shorter rows are the coefficient vectors of the relations in Table \ref{defreltaq}.
This computation with integer arithmetic will be justified 
when we provide explicit proofs in characteristic 0. 

We now extract a subset of the rows of $C$ which forms a basis of the row space.
(Compute the RCF of $C^t$ and identify the columns which contain leading 1s.)
We write $D$ for the resulting full-rank submatrix of $C$.
Let $E$ be the $53521 \times 50$ matrix in which column $j$ is the coefficient vector
of the rewriting of relation $j$ as a linear combination of two-operator monomials.
Form the block matrix $[  \,D^t \mid E \, ]$ and compute its RCF.
The column vectors in the right block express the rewritings of the 50 relations
as linear combinations of the consequences of the RB relations.
These equations are checkable in characteristic 0 as we will see below.
We verify that these 50 relations are linearly independent in characteristic 0, so
\begin{equation}
\label{bound2}
\dim_\mathbb{Q} K \ge 50.
\end{equation}
Combining this with inequality \eqref{bound1} we conclude that $\dim_\mathbb{Q} K = 50$.

The first 8 relations in Table \ref{defreltaq} express the non-injectivity 
of the rewriting morphism and have trivial proofs:
\begin{alignat*}{3}
&( ( {\ast} {\ast} {\ast} )_{22} {\ast} {\ast} )_{11} 
&\quad\longmapsto\quad
&T[ S({\ast}) ] {\ast} T[ S({\ast}) ] T[ S({\ast}) ] T[ S({\ast}) ]
&\quad\longleftmapsto\quad
&( {\ast} ( {\ast} {\ast} {\ast} )_{11} {\ast} )_{22},
\\
&( ( {\ast} {\ast} {\ast} )_{23} {\ast} {\ast} )_{11} 
&\quad\longmapsto\quad
&T[S({\ast})]T[{\ast}]S({\ast})T[S({\ast})]T[S({\ast})]
&\quad\longleftmapsto\quad
&( {\ast} ( {\ast} {\ast} {\ast} )_{12} {\ast} )_{22},
\\
&( ( {\ast} {\ast} {\ast} )_{32} {\ast} {\ast} )_{11} 
&\quad\longmapsto\quad
&T[S({\ast})]S({\ast})T[{\ast}]T[S({\ast})]T[S({\ast})]
&\quad\longleftmapsto\quad
&( {\ast} ( {\ast} {\ast} {\ast} )_{21} {\ast} )_{22},
\\
&( ( {\ast} {\ast} {\ast} )_{33} {\ast} {\ast} )_{11} 
&\quad\longmapsto\quad
&T[S({\ast})]T[S({\ast})]{\ast}T[S({\ast})]T[S({\ast})]
&\quad\longleftmapsto\quad
&( {\ast} {\ast} ( {\ast} {\ast} {\ast} )_{11} )_{33},
\\
&( {\ast} ( {\ast} {\ast} {\ast})_{22} {\ast} )_{22} 
&\quad\longmapsto\quad
&T[S({\ast})]T[S({\ast})]{\ast}T[S({\ast})]T[S({\ast})]
&\quad\longleftmapsto\quad
&( {\ast} {\ast} ( {\ast} {\ast} {\ast} )_{11} )_{33},
\\
&( {\ast} ( {\ast} {\ast} {\ast})_{23} {\ast} )_{22} 
&\quad\longmapsto\quad
&T[S({\ast})]T[S({\ast})]T[{\ast}]S({\ast})T[S({\ast})]
&\quad\longleftmapsto\quad
&( {\ast} {\ast} ( {\ast} {\ast} {\ast} )_{12} )_{33},
\\
&( {\ast} ( {\ast} {\ast} {\ast})_{32} {\ast} )_{22} 
&\quad\longmapsto\quad
&T[S({\ast})]T[S({\ast})]S({\ast})T[{\ast}]T[S({\ast})]
&\quad\longleftmapsto\quad
&( {\ast} {\ast} ( {\ast} {\ast} {\ast} )_{21} )_{33},
\\
&( {\ast} ( {\ast} {\ast} {\ast})_{33} {\ast} )_{22} 
&\quad\longmapsto\quad
&T[S({\ast})]T[S({\ast})]T[S({\ast})]{\ast}T[S({\ast})]
&\quad\longleftmapsto\quad
&( {\ast} {\ast} ( {\ast} {\ast} {\ast} )_{22} )_{33}.
\end{alignat*}
Each of the remaining 42 relations $v = w$ is proved in characteristic 0
by expressing the rewriting of $v - w$ as a linear combination
of consequences of the Rota-Baxter relations.
We then verify the equality by converting to the basis of two-operator monomials.
We list the complete results in the Appendix.
As an example we provide details of the proof of relation 50:
\[
( {\ast} ( {\ast} {\ast} {\ast} )_{\bullet\bullet} {\ast} )_{31} 
-
( {\ast} {\ast} ( {\ast} {\ast} {\ast} )_{3\bullet} )_{31}.
\]
Expand the auxiliary operations:
\begin{align*}
&
  ( {\ast} ( {\ast}{\ast}{\ast} )_{11} {\ast} )_{31}     
+ ( {\ast} ( {\ast}{\ast}{\ast} )_{12} {\ast} )_{31}     
+ ( {\ast} ( {\ast}{\ast}{\ast} )_{13} {\ast} )_{31}     
+ ( {\ast} ( {\ast}{\ast}{\ast} )_{21} {\ast} )_{31} 
\\
&{}
+ ( {\ast} ( {\ast}{\ast}{\ast} )_{22} {\ast} )_{31}     
+ ( {\ast} ( {\ast}{\ast}{\ast} )_{23} {\ast} )_{31}     
+ ( {\ast} ( {\ast}{\ast}{\ast} )_{31} {\ast} )_{31}     
+ ( {\ast} ( {\ast}{\ast}{\ast} )_{32} {\ast} )_{31} 
\\
& {}
+ ( {\ast} ( {\ast}{\ast}{\ast} )_{33} {\ast} )_{31}     
- ( {\ast} {\ast} ( {\ast}{\ast}{\ast} )_{31} )_{31}    
- ( {\ast} {\ast} ( {\ast}{\ast}{\ast} )_{32} )_{31}    
- ( {\ast} {\ast} ( {\ast}{\ast}{\ast} )_{33} )_{31}.
\end{align*}
Apply the rewriting morphism to each term, and call the result $Z$:
\begin{align*}
Z &=
  T[{\ast}]T[S({\ast}T[S({\ast})]T[S({\ast})])]S({\ast}) 
+ T[{\ast}]T[S(T[{\ast}]S({\ast})T[S({\ast})])]S({\ast}) 
\\
&\quad {}
+ T[{\ast}]T[S(T[{\ast}]T[S({\ast})]S({\ast}))]S({\ast}) 
+ T[{\ast}]T[S(S({\ast})T[{\ast}]T[S({\ast})])]S({\ast}) 
\\
&\quad {}
+ T[{\ast}]T[S(T[S({\ast})]{\ast}T[S({\ast})])]S({\ast}) 
+ T[{\ast}]T[S(T[S({\ast})]T[{\ast}]S({\ast}))]S({\ast}) 
\\
&\quad {}
+ T[{\ast}]T[S(S({\ast})T[S({\ast})]T[{\ast}])]S({\ast}) 
+ T[{\ast}]T[S(T[S({\ast})]S({\ast})T[{\ast}])]S({\ast}) 
\\
&\quad {}
+ T[{\ast}]T[S(T[S({\ast})]T[S({\ast})]{\ast})]S({\ast}) 
- T[{\ast}]T[S({\ast})]S(T[{\ast}]T[S({\ast})]S({\ast})) 
\\
&\quad {}
- T[{\ast}]T[S({\ast})]S(T[S({\ast})]T[{\ast}]S({\ast})) 
- T[{\ast}]T[S({\ast})]S(T[S({\ast})]T[S({\ast})]{\ast}).
\end{align*}
Express the five partial compositions for relation 50 in the Appendix
as linear combinations of two-operator monomials:
\begin{align}
&
- ( ( ( ( ( A \circ_2 R_T )  \circ_1 S )  \circ_2 S )  \circ_3 S )  \circ_4 S )  \circ_5 T 
=
\label{pc1}
\\
&\qquad
S({\ast})T[S({\ast})T[S({\ast})]T[S({\ast})]]T[{\ast}]
- S({\ast})T[S({\ast})]T[S({\ast})]T[S({\ast})]T[{\ast}]
\notag
\\
&\qquad
+ S({\ast})T[T[S({\ast})]S({\ast})T[S({\ast})]]T[{\ast}]
+ S({\ast})T[T[S({\ast})]T[S({\ast})]S({\ast})]T[{\ast}],
\notag
\\
&
- ( ( ( ( A \circ_2 ( T \circ R_S ) )  \circ_1 S )  \circ_3 T )  \circ_4 T )  \circ_5 T 
=
\label{pc2}
\\
&\qquad
- S({\ast})T[S({\ast})T[S({\ast})]T[S({\ast})]]T[{\ast}]
+ S({\ast})T[S({\ast}T[S({\ast})]T[S({\ast})])]T[{\ast}]
\notag
\\
&\qquad
+ S({\ast})T[S(S({\ast})T[{\ast}]T[S({\ast})])]T[{\ast}]
+ S({\ast})T[S(S({\ast})T[S({\ast})]T[{\ast}])]T[{\ast}],
\notag
\\
&
- ( ( ( ( A \circ_2 ( T \circ R_S ) )  \circ_1 S )  \circ_2 T )  \circ_4 T )  \circ_5 T 
=
\label{pc3}
\\
&\qquad
S({\ast})T[S(T[{\ast}]S({\ast})T[S({\ast})])]T[{\ast}]
+ S({\ast})T[S(T[S({\ast})]{\ast}T[S({\ast})])]T[{\ast}]
\notag
\\
&\qquad
+ S({\ast})T[S(T[S({\ast})]S({\ast})T[{\ast}])]T[{\ast}]
- S({\ast})T[T[S({\ast})]S({\ast})T[S({\ast})]]T[{\ast}],
\notag
\\
&
- ( ( ( ( A \circ_2 ( T \circ R_S ) )  \circ_1 S )  \circ_2 T )  \circ_3 T )  \circ_5 T 
=
\label{pc4}
\\
&\qquad
+ S({\ast})T[S(T[{\ast}]T[S({\ast})]S({\ast}))]T[{\ast}]
+ S({\ast})T[S(T[S({\ast})]T[{\ast}]S({\ast}))]T[{\ast}]
\notag
\\
&\qquad
+ S({\ast})T[S(T[S({\ast})]T[S({\ast})]{\ast})]T[{\ast}]
- S({\ast})T[T[S({\ast})]T[S({\ast})]S({\ast})]T[{\ast}],
\notag
\\
&
( ( ( ( ( A \circ_3 R_T )  \circ_1 S )  \circ_2 S )  \circ_3 S )  \circ_4 S )  \circ_2 T
=
\label{pc5}
\\
&\qquad
- S({\ast})T[S({\ast})]T[S({\ast})T[S({\ast})]T[{\ast}]]
+ S({\ast})T[S({\ast})]T[S({\ast})]T[S({\ast})]T[{\ast}]
\notag
\\
&\qquad
- S({\ast})T[S({\ast})]T[T[S({\ast})]S({\ast})T[{\ast}]]
- S({\ast})T[S({\ast})]T[T[S({\ast})]T[S({\ast})]{\ast}].
\notag
\end{align}
The first terms on the right sides of \eqref{pc1} and \eqref{pc2} cancel,
as do the second terms in \eqref{pc1} and \eqref{pc5},
the third term in \eqref{pc1} and the fourth term in \eqref{pc3},
and the fourth terms in \eqref{pc1} and \eqref{pc4}.
This leaves 12 two-operator monomials which coincide with $Z$.
The permutation identifying the terms of $Z$ with remaining terms from 
the partial compositions is 1, 4, 7, 2, 5, 8, 3, 6, 9, 10, 11, 12.
\end{proof}

\begin{remark}
We say that two relations in Table \ref{defreltaq} are transposes of each other 
if they are interchanged by replacing $\omega_{ij}$ by $\omega_{ji}$
(equivalently, by interchanging $S$ and $T$).
We then have the following self-transpose relations and transpose pairs.
Self-transpose:
1, 4, 5, 8, 39, 40, 41, 44, 47, 48.
Transpose pairs:
(2,3), (6,7),
(9,15), (10,16), (11,18), (12,19), (13,20), (14,21), (17,22),
(23,26), (24,27), (25,28),
(29,31), (30,32), (33,34), (35,36), (37,38),
(42,43), (45,46),
(49,50).
\end{remark}

\begin{definition}
The nonsymmetric operad $\taq$ defining the ternary analog of quadri-algebras
is generated by 9 nonassociative ternary operations satisfying the 50 relations of 
Table \ref{defreltaq}.
\end{definition}

We remark that $\taq$ may be regarded as the quotient of $\na_{3,9}$ by the ideal generated
by the 50 relations of Table \ref{defreltaq}.
The composition $f_{3,2} = s_{3,2} \circ r_{3,2}$ (Definition \ref{compositiondefinition}) 
defines a surjective operad morphism $g_{3,2}$
from $\taq$ to the suboperad of $\rb_{3,2}$ generated by the rewritings of the generating operations.
Theorem \ref{quadritheorem} identifies the full quadratic kernel of $f_{3,2}$. 
Hence the induced morphism $g_{3,2}$ has trivial quadratic kernel. 

\begin{openproblem}
It remains open whether $g_{3,2}$ is injective in all higher weights.
That is, are there any higher (cubic or above) relations satisfied by the 9 generating operations
that are not consequences of the quadratic relations?
\end{openproblem}

\begin{openproblem}
Compute a Gr\"obner basis for this operad.
Does it have a quadratic Gr\"obner basis?
\end{openproblem}

\begin{openproblem}
Is this operad Koszul?
\end{openproblem}

\begin{openproblem}
Extend these results to ternary analogs of octo-algebras \cite{Leroux},
dendriform trialgebras \cite{LR2}, and ennea-algebras \cite{Leroux2}.
\end{openproblem}


\subsubsection*{Dimension calculations for the operad $\taq$}

First recall the dimension formula for the quadri-algebra operad $\mathcal{Q}$
conjectured by Aguiar and Loday \cite[Conjecture 4.2]{AL} and proved by
Vallette \cite[Corollary 52]{V}:
\[
\dim \mathcal{Q}(n) = \frac1n \sum_{i=n}^{2n-1} \binom{3n}{n+1+i} \binom{i-1}{i-n}.
\]
This is sequence A007297 in the OEIS: the number of connected graphs 
on $n$ labeled nodes on a circle with straight-line edges that do not cross.

For the arity-graded operad $\taq$, it is obvious that
\[
\dim \taq(1) = 1, \qquad
\dim \taq(3) = 9.
\]
We have also computed the value
\[
\dim \taq(5) = 193 \, ( = 243 - 50 ). 
\]
We now describe modular computations supporting the conjecture that
\[
\dim \taq(7) = 5393.
\]
These computations are similar to the rational computations at the end of \S\ref{ternarydendriform}.
For 9 operations, we have $12 \cdot 9^3 = 8748$ cubic nonsymmetric monomials in arity 7. 
Let $\rho$ denote one of the 50 quadratic relations in Table \ref{defreltaq},
and let $\omega_{ij}$ ($1 \le i, j \le 3$) denote the 9 generating operations of $\taq$.
There are 45 partial compositions of the form $\rho \, \circ_k \, \omega_{ij}$, 
and 27 partial compositions of the form $\omega_{ij} \, \circ_k \, \rho$.
Thus we obtain 72 cubic consequences of each $\rho$, 
and hence 3600 cubic consequences $\sigma_i$ of all 50 relations.
We construct a modular matrix ($p = 101$) denoted $M$ of size $3600 \times 8748$.
The $(i,j)$ entry is the coefficient of the $j$-th cubic monomial in $\sigma_i$.
The modular rank of this matrix is 3355 and hence by Lemma \ref{ranklemma} we have
\[
\text{rank}_\mathbb{Q} M \ge 3355,
\quad
\text{which implies}
\quad
\dim_\mathbb{Q} \taq(7) \le 5393 \, ( = 8748 - 3355 ). 
\]
At the time of writing, the initial sequence 1, 9, 193, 5393 did not match any sequence in the OEIS.

\begin{openproblem}
Determine $\dim \taq(n)$ for all odd $n$.
\end{openproblem}


\section{Appendix: Exact certificates for $\taq$ relations 9 to 50}

For $9 \le i \le 50$, express relation $i$ from Table \ref{defreltaq} as $v - w = 0$.
Then item $i$ in the following list contains the rewriting of $v - w$ as a linear combination of 
consequences of the ternary Rota-Baxter relations $R_S$ and $R_T$.
These claims hold over a field of characteristic 0 and can be verified by hand as done for relation 50 
at the end of the proof of Theorem \ref{quadritheorem}:
\begin{alignat*}{2}
9\colon&\quad &\quad & 
( ( ( ( ( A \circ_2 R_S )  \circ_5 S )  \circ_1 T )  \circ_3 T )  \circ_4 T )  \circ_5 T,
\\
10\colon&\quad &\quad &
- ( ( ( ( ( A \circ_1 R_S )  \circ_5 S )  \circ_1 T )  \circ_3 T )  \circ_4 T )  \circ_5 T,
\\
11\colon&\quad &\quad &
( ( ( ( ( A \circ_2 R_S )  \circ_5 S )  \circ_1 T )  \circ_2 T )  \circ_4 T )  \circ_5 T,
\\
12\colon&\quad &\quad &
( ( ( ( ( A \circ_3 R_S )  \circ_2 S )  \circ_1 T )  \circ_2 T )  \circ_4 T )  \circ_5 T,
\\
13\colon&\quad &\quad &
- ( ( ( ( ( A \circ_1 R_S )  \circ_5 S )  \circ_1 T )  \circ_2 T )  \circ_4 T )  \circ_5 T,
\\
14\colon&\quad &\quad &
- ( ( ( ( ( A \circ_1 R_S )  \circ_4 S )  \circ_1 T )  \circ_2 T )  \circ_4 T )  \circ_5 T,
\\
15\colon&\quad &\quad &
( ( ( ( ( A \circ_2 R_T )  \circ_1 S )  \circ_3 S )  \circ_4 S )  \circ_5 S )  \circ_5 T,
\\
16\colon&\quad &\quad &
- ( ( ( ( ( A \circ_1 R_T )  \circ_1 S )  \circ_3 S )  \circ_4 S )  \circ_5 S )  \circ_5 T,
\\
17\colon&\quad &\quad &
( ( ( ( ( A \circ_3 R_S )  \circ_1 S )  \circ_1 T )  \circ_2 T )  \circ_4 T )  \circ_5 T,
\\
18\colon&\quad &\quad &
( ( ( ( ( A \circ_2 R_T )  \circ_1 S )  \circ_2 S )  \circ_4 S )  \circ_5 S )  \circ_5 T,
\\
19\colon&\quad &\quad &
( ( ( ( ( A \circ_3 R_T )  \circ_1 S )  \circ_2 S )  \circ_4 S )  \circ_5 S )  \circ_2 T,
\\
20\colon&\quad &\quad &
- ( ( ( ( ( A \circ_1 R_T )  \circ_1 S )  \circ_2 S )  \circ_4 S )  \circ_5 S )  \circ_5 T,
\\
21\colon&\quad &\quad &
- ( ( ( ( ( A \circ_1 R_T )  \circ_1 S )  \circ_2 S )  \circ_4 S )  \circ_5 S )  \circ_4 T, 
\\
22\colon&\quad &\quad &
( ( ( ( ( A \circ_3 R_T )  \circ_1 S )  \circ_2 S )  \circ_4 S )  \circ_5 S )  \circ_1 T,
\\
23\colon&\quad &\quad &
- ( ( ( ( ( A \circ_2 R_S )  \circ_1 S )  \circ_1 T )  \circ_2 T )  \circ_4 T )  \circ_5 T, 
\\
24\colon&\quad &\quad &
( ( ( ( ( A \circ_3 R_S )  \circ_1 S )  \circ_1 T )  \circ_2 T )  \circ_3 T )  \circ_5 T, 
\\
25\colon&\quad &\quad &
- ( ( ( ( ( A \circ_2 R_S )  \circ_1 S )  \circ_1 T )  \circ_2 T )  \circ_3 T )  \circ_5 T,
\\
26\colon&\quad &\quad &
- ( ( ( ( ( A \circ_2 R_T )  \circ_1 S )  \circ_2 S )  \circ_4 S )  \circ_5 S )  \circ_1 T,
\\
27\colon&\quad &\quad &
( ( ( ( ( A \circ_3 R_T )  \circ_1 S )  \circ_2 S )  \circ_3 S )  \circ_5 S )  \circ_1 T, 
\\
28\colon&\quad &\quad &
- ( ( ( ( ( A \circ_2 R_T )  \circ_1 S )  \circ_2 S )  \circ_3 S )  \circ_5 S )  \circ_1 T, 
\\
29\colon&\quad &\quad &
- ( ( ( ( ( A \circ_1 R_T )  \circ_2 S )  \circ_3 S )  \circ_4 S )  \circ_5 S )  \circ_5 T 
\\
&&&
+ ( ( ( ( ( A \circ_3 R_S )  \circ_2 S )  \circ_1 T )  \circ_2 T )  \circ_3 T )  \circ_5 T, 
\\
30\colon&\quad &\quad &
- ( ( ( ( ( A \circ_1 R_T )  \circ_2 S )  \circ_3 S )  \circ_4 S )  \circ_5 S )  \circ_4 T 
\\
&&&
+ ( ( ( ( ( A \circ_3 R_S )  \circ_2 S )  \circ_1 T )  \circ_2 T )  \circ_3 T )  \circ_4 T, 
\\
31\colon&\quad &\quad &
- ( ( ( ( ( A \circ_1 R_S )  \circ_5 S )  \circ_2 T )  \circ_3 T )  \circ_4 T )  \circ_5 T 
\\
&&&
+ ( ( ( ( ( A \circ_3 R_T )  \circ_1 S )  \circ_2 S )  \circ_3 S )  \circ_5 S )  \circ_2 T, 
\\
32\colon&\quad &\quad &
- ( ( ( ( ( A \circ_1 R_S )  \circ_4 S )  \circ_2 T )  \circ_3 T )  \circ_4 T )  \circ_5 T 
\\
&&&
+ ( ( ( ( ( A \circ_3 R_T )  \circ_1 S )  \circ_2 S )  \circ_3 S )  \circ_4 S )  \circ_2 T, 
\\
33\colon&\quad &\quad &
- ( ( ( ( ( A \circ_1 R_S )  \circ_4 S )  \circ_1 T )  \circ_3 T )  \circ_4 T )  \circ_5 T 
\\
&&&
+ ( ( ( ( ( A \circ_3 R_T )  \circ_1 S )  \circ_2 S )  \circ_3 S )  \circ_4 S )  \circ_1 T, 
\\
34\colon&\quad &\quad &
- ( ( ( ( ( A \circ_1 R_T )  \circ_1 S )  \circ_3 S )  \circ_4 S )  \circ_5 S )  \circ_4 T 
\\
&&&
+ ( ( ( ( ( A \circ_3 R_S )  \circ_1 S )  \circ_1 T )  \circ_2 T )  \circ_3 T )  \circ_4 T, 
\\
35\colon&\quad &\quad &
- ( ( ( ( ( A \circ_2 R_T )  \circ_1 S )  \circ_3 S )  \circ_4 S )  \circ_5 S )  \circ_1 T 
\\
&&&
+ ( ( ( ( ( A \circ_3 R_S )  \circ_1 S )  \circ_1 T )  \circ_2 T )  \circ_3 T )  \circ_4 T, 
\\
36\colon&\quad &\quad &
- ( ( ( ( ( A \circ_2 R_S )  \circ_1 S )  \circ_1 T )  \circ_3 T )  \circ_4 T )  \circ_5 T 
\\
&&&
+ ( ( ( ( ( A \circ_3 R_T )  \circ_1 S )  \circ_2 S )  \circ_3 S )  \circ_4 S )  \circ_1 T, 
\\
37\colon&\quad &\quad &
- ( ( ( ( ( A \circ_2 R_S )  \circ_5 S )  \circ_1 T )  \circ_2 T )  \circ_3 T )  \circ_5 T 
\\
&&&
+ ( ( ( ( ( A \circ_3 R_S )  \circ_2 S )  \circ_1 T )  \circ_2 T )  \circ_3 T )  \circ_5 T,
\\
38\colon&\quad &\quad &
- ( ( ( ( ( A \circ_2 R_T )  \circ_1 S )  \circ_2 S )  \circ_3 S )  \circ_5 S )  \circ_5 T 
\\
&&&
+ ( ( ( ( ( A \circ_3 R_T )  \circ_1 S )  \circ_2 S )  \circ_3 S )  \circ_5 S )  \circ_2 T, 
\\
39\colon&\quad &\quad &
( ( ( ( A \circ_2 ( T \circ R_S ) )  \circ_5 S )  \circ_2 T )  \circ_3 T )  \circ_5 T 
\\
&&&
+ ( ( ( ( A \circ_2 ( T \circ R_S ) )  \circ_5 S )  \circ_2 T )  \circ_4 T )  \circ_5 T 
\\
&&&
+ ( ( ( ( A \circ_2 ( T \circ R_S ) )  \circ_5 S )  \circ_3 T )  \circ_4 T )  \circ_5 T 
\\
&&&
- ( ( ( ( ( A \circ_2 R_T )  \circ_2 S )  \circ_3 S )  \circ_4 S )  \circ_5 S )  \circ_5 T, 
\\
40\colon&\quad &\quad &
( ( ( ( A \circ_3 ( T \circ R_S ) )  \circ_2 S )  \circ_2 T )  \circ_3 T )  \circ_4 T 
\\
&&&
+ ( ( ( ( A \circ_3 ( T \circ R_S ) )  \circ_2 S )  \circ_2 T )  \circ_3 T )  \circ_5 T 
\\
&&&
+ ( ( ( ( A \circ_3 ( T \circ R_S ) )  \circ_2 S )  \circ_2 T )  \circ_4 T )  \circ_5 T 
\\
&&&
- ( ( ( ( ( A \circ_3 R_T )  \circ_2 S )  \circ_3 S )  \circ_4 S )  \circ_5 S )  \circ_2 T, 
\\
41\colon&\quad &\quad &
- ( ( ( ( ( A \circ_1 R_T )  \circ_1 S )  \circ_2 S )  \circ_3 S )  \circ_5 S )  \circ_5 T 
\\
&&&
- ( ( ( ( A \circ_1 ( T \circ R_S ) )  \circ_5 S )  \circ_2 T )  \circ_3 T )  \circ_5 T 
\\
&&&
- ( ( ( ( A \circ_1 ( T \circ R_S ) )  \circ_5 S )  \circ_1 T )  \circ_3 T )  \circ_5 T 
\\
&&&
- ( ( ( ( A \circ_1 ( T \circ R_S ) )  \circ_5 S )  \circ_1 T )  \circ_2 T )  \circ_5 T, 
\\
42\colon&\quad &\quad &
- ( ( ( ( ( A \circ_1 R_T )  \circ_1 S )  \circ_2 S )  \circ_3 S )  \circ_5 S )  \circ_4 T 
\\
&&&
- ( ( ( ( A \circ_1 ( T \circ R_S ) )  \circ_5 S )  \circ_2 T )  \circ_3 T )  \circ_4 T 
\\
&&&
- ( ( ( ( A \circ_1 ( T \circ R_S ) )  \circ_5 S )  \circ_1 T )  \circ_3 T )  \circ_4 T 
\\
&&&
- ( ( ( ( A \circ_1 ( T \circ R_S ) )  \circ_5 S )  \circ_1 T )  \circ_2 T )  \circ_4 T, 
\\
43\colon&\quad &\quad &
- ( ( ( ( ( A \circ_1 R_T )  \circ_1 S )  \circ_2 S )  \circ_3 S )  \circ_4 S )  \circ_5 T 
\\
&&&
- ( ( ( ( A \circ_1 ( T \circ R_S ) )  \circ_4 S )  \circ_2 T )  \circ_3 T )  \circ_5 T 
\\
&&&
- ( ( ( ( A \circ_1 ( T \circ R_S ) )  \circ_4 S )  \circ_1 T )  \circ_3 T )  \circ_5 T 
\\
&&&
- ( ( ( ( A \circ_1 ( T \circ R_S ) )  \circ_4 S )  \circ_1 T )  \circ_2 T )  \circ_5 T, 
\\
44\colon&\quad &\quad &
- ( ( ( ( ( A \circ_1 R_T )  \circ_1 S )  \circ_2 S )  \circ_3 S )  \circ_4 S )  \circ_4 T 
\\
&&&
- ( ( ( ( A \circ_1 ( T \circ R_S ) )  \circ_4 S )  \circ_2 T )  \circ_3 T )  \circ_4 T 
\\
&&&
- ( ( ( ( A \circ_1 ( T \circ R_S ) )  \circ_4 S )  \circ_1 T )  \circ_3 T )  \circ_4 T 
\\
&&&
- ( ( ( ( A \circ_1 ( T \circ R_S ) )  \circ_4 S )  \circ_1 T )  \circ_2 T )  \circ_4 T, 
\\
45\colon&\quad &\quad &
( ( ( ( A \circ_3 ( T \circ R_S ) )  \circ_2 S )  \circ_1 T )  \circ_3 T )  \circ_4 T 
\\
&&&
+ ( ( ( ( A \circ_3 ( T \circ R_S ) )  \circ_2 S )  \circ_1 T )  \circ_3 T )  \circ_5 T 
\\
&&&
+ ( ( ( ( A \circ_3 ( T \circ R_S ) )  \circ_2 S )  \circ_1 T )  \circ_4 T )  \circ_5 T 
\\
&&&
+ ( ( ( ( ( A \circ_3 R_T )  \circ_2 S )  \circ_3 S )  \circ_4 S )  \circ_5 S )  \circ_1 T,
\\
46\colon&\quad &\quad &
( ( ( ( A \circ_3 ( T \circ R_S ) )  \circ_1 S )  \circ_2 T )  \circ_3 T )  \circ_4 T 
\\
&&&
+ ( ( ( ( A \circ_3 ( T \circ R_S ) )  \circ_1 S )  \circ_2 T )  \circ_3 T )  \circ_5 T 
\\
&&&
+ ( ( ( ( A \circ_3 ( T \circ R_S ) )  \circ_1 S )  \circ_2 T )  \circ_4 T )  \circ_5 T 
\\
&&&
+ ( ( ( ( ( A \circ_3 R_T )  \circ_1 S )  \circ_3 S )  \circ_4 S )  \circ_5 S )  \circ_2 T,
\\
47\colon&\quad &\quad &
( ( ( ( A \circ_3 ( T \circ R_S ) )  \circ_1 S )  \circ_1 T )  \circ_3 T )  \circ_4 T 
\\
&&&
+ ( ( ( ( A \circ_3 ( T \circ R_S ) )  \circ_1 S )  \circ_1 T )  \circ_3 T )  \circ_5 T 
\\
&&&
+ ( ( ( ( A \circ_3 ( T \circ R_S ) )  \circ_1 S )  \circ_1 T )  \circ_4 T )  \circ_5 T 
\\
&&&
+ ( ( ( ( ( A \circ_3 R_T )  \circ_1 S )  \circ_3 S )  \circ_4 S )  \circ_5 S )  \circ_1 T,
\\
48\colon&\quad &\quad &
- ( ( ( ( ( A \circ_2 R_T )  \circ_1 S )  \circ_2 S )  \circ_3 S )  \circ_4 S )  \circ_1 T 
\\
&&&
- ( ( ( ( A \circ_2 ( T \circ R_S ) )  \circ_1 S )  \circ_1 T )  \circ_3 T )  \circ_4 T 
\\
&&&
- ( ( ( ( A \circ_2 ( T \circ R_S ) )  \circ_1 S )  \circ_1 T )  \circ_2 T )  \circ_4 T 
\\
&&&
- ( ( ( ( A \circ_2 ( T \circ R_S ) )  \circ_1 S )  \circ_1 T )  \circ_2 T )  \circ_3 T,
\\
49\colon&\quad &\quad &
- ( ( ( ( ( A \circ_2 R_T )  \circ_2 S )  \circ_3 S )  \circ_4 S )  \circ_5 S )  \circ_1 T 
\\
&&&
- ( ( ( ( A \circ_2 ( T \circ R_S ) )  \circ_5 S )  \circ_1 T )  \circ_3 T )  \circ_4 T 
\\
&&&
- ( ( ( ( A \circ_2 ( T \circ R_S ) )  \circ_5 S )  \circ_1 T )  \circ_2 T )  \circ_4 T 
\\
&&&
- ( ( ( ( A \circ_2 ( T \circ R_S ) )  \circ_5 S )  \circ_1 T )  \circ_2 T )  \circ_3 T 
\\
&&&
+ ( ( ( ( ( A \circ_3 R_S )  \circ_2 S )  \circ_1 T )  \circ_2 T )  \circ_3 T )  \circ_4 T,
\\
50\colon&\quad &\quad &
- ( ( ( ( ( A \circ_2 R_T )  \circ_1 S )  \circ_2 S )  \circ_3 S )  \circ_4 S )  \circ_5 T 
\\
&&&
- ( ( ( ( A \circ_2 ( T \circ R_S ) )  \circ_1 S )  \circ_3 T )  \circ_4 T )  \circ_5 T 
\\
&&&
- ( ( ( ( A \circ_2 ( T \circ R_S ) )  \circ_1 S )  \circ_2 T )  \circ_4 T )  \circ_5 T 
\\
&&&
- ( ( ( ( A \circ_2 ( T \circ R_S ) )  \circ_1 S )  \circ_2 T )  \circ_3 T )  \circ_5 T 
\\
&&&
+ ( ( ( ( ( A \circ_3 R_T )  \circ_1 S )  \circ_2 S )  \circ_3 S )  \circ_4 S )  \circ_2 T.
\end{alignat*}


\section{Acknowledgement}


\subsubsection*{AI acknowledgement}

The authors used Microsoft Copilot to assist with proofreading,
expository suggestions, and bibliographic consistency checks.  All
mathematical content and conclusions were reviewed and verified by the
authors.



\end{document}